\documentclass[reqno,12pt,letterpaper]{amsart}
\setlength{\textheight}{8.50in} \setlength{\oddsidemargin}{0.00in}
\setlength{\evensidemargin}{0.00in} \setlength{\textwidth}{6.08in}
\setlength{\topmargin}{0.00in} \setlength{\headheight}{0.18in}
\setlength{\marginparwidth}{1.0in}
\setlength{\abovedisplayskip}{0.2in}
\setlength{\belowdisplayskip}{0.2in}
\setlength{\parskip}{0.04in}
\setlength{\lineskip}{0.02in}

\usepackage{amsmath,amssymb,amsthm,graphicx,mathrsfs,url}
\usepackage[usenames,dvipsnames]{color}
\usepackage[colorlinks=true,linkcolor=Red,citecolor=Green]{hyperref}
\usepackage{floatrow}
\usepackage[usenames,dvipsnames]{xcolor}
\usepackage{tikz} 
\usepackage{slashed}
\usepackage{graphicx}
\usepackage[colorlinks=true]{hyperref}
\usepackage{color}
\usepackage{amsmath,amsfonts}
\usepackage{calrsfs}
\usepackage{amsmath,environ}
\usepackage{floatrow}
\usepackage{autonum}
\usepackage{amssymb}
\usepackage{amsmath, amsthm}
\usepackage{dsfont}
\usepackage{fancybox}
\usepackage{amssymb}
        \usetikzlibrary{decorations.pathreplacing}

\DeclareMathAlphabet{\pazocal}{OMS}{zplm}{m}{n}

\numberwithin{equation}{section}

\newcommand{\Range}{{\operatorname{Range}}}

\newcommand{\JJ}{{\pazocal{J}}}

\newcommand{\R}{\mathbb{R}}
\newcommand{\N}{\mathbb{N}}

\newcommand{\p}{{\partial}}
\newcommand{\olambda}{{\overline{\lambda}}}

\newcommand{\dd}[2]{\dfrac{\partial #1}{\partial #2}}
\newcommand{\matrice}[1]{\left[ \begin{matrix}
#1
\end{matrix} \right]}

\newcommand{\DD}{{\pazocal{D}}}

\newcommand{\loc}{{\text{loc}}}
\newcommand{\epsi}{\varepsilon}
\newcommand{\sgn}{{\operatorname{sgn}}}
 
\newcommand{\Tr}{{\operatorname{Tr}}}

\newcommand{\lr}[1]{\langle #1 \rangle}
\newcommand{\blr}[1]{\left\langle #1 \right\rangle}

\newcommand{\FF}{\pazocal{F}}

\newcommand{\tchi}{{\tilde{\chi}}}
\newcommand{\GG}{\pazocal{G}}

\newcommand{\supp}{\mathrm{supp}}
\newcommand{\trho}{\tilde{\rho}}

\newcommand{\tP}{\widetilde{P}}

\newcommand{\Z}{\mathbb{Z}}
\newcommand{\Dd}{\mathbb{D}}

\newcommand{\TT}{\pazocal{T}}

\newcommand{\Pp}{\mathbb{P}}
\newcommand{\tPp}{\widetilde{\mathbb{P}}}
\newcommand{\tQq}{\widetilde{\mathbb{Q}}}
\newcommand{\Ww}{\mathbb{W}}

\newcommand{\Id}{{\operatorname{Id}}}
\newcommand{\BB}{\mathcal{B}}

\newcommand{\EE}{\pazocal{E}}

\newcommand{\WF}{{\operatorname{WF}}}

\newcommand{\II}{\pazocal{I}}

\newcommand{\LL}{{\pazocal{L}}}
\newcommand{\HH}{\pazocal{H}}
\newcommand{\SSS}{{\pazocal{S}}}
\newcommand{\ZZ}{{\pazocal{Z}}}

\newcommand{\vp}{{\varphi}}

\newcommand{\ove}[1]{{\overline{#1}}}
\newcommand{\systeme}[1]{\left\{ \begin{matrix} #1 \end{matrix} \right.}

\newcommand{\dive}{{\operatorname{div}}}

\newcommand{\tvp}{{\tilde{\vp}}}

\newcommand{\Res}{{\operatorname{Res}}}

\newcommand{\C}{\mathbb{C}}

\newcommand{\az}{\alpha}

\newcommand{\tg}{\tilde{g}}

\newcommand{\Qq}{{\mathbb{Q}}}

\newcommand{\rk}{{\operatorname{rk}}}
\newcommand{\Op}{{\operatorname{Op}}}

\renewcommand{\Re}{\operatorname{Re}}

\newcommand{\ess}{{\operatorname{ess}}}
\renewcommand{\Im}{\operatorname{Im}}

\newcommand{\tE}{\widetilde{E}}

\newcommand{\Tt}{{\mathbb{T}}}

\newcommand{\1}{\mathds{1}}

\newcommand{\de}{ \ \mathrel{\stackrel{\makebox[0pt]{\mbox{\normalfont\tiny def}}}{=}} \ }

\title[Microlocal analysis of the bulk-edge correspondence]{Microlocal analysis of the bulk-edge correspondence}

\author{Alexis Drouot}

\NewEnviron{equations}{%
\begin{equation}\begin{gathered}
  \BODY
\end{gathered}\end{equation}
}
\newtheorem{thm}{Theorem}
\newtheorem*{conj}{Conjecture}

\newtheorem{lem}{Lemma}[section]

\newtheorem{theorem}[thm]{Theorem}

\begin{document}

\vspace*{-5mm}

\maketitle

\begin{abstract}
The bulk-edge correspondence predicts that interfaces between topological insulators support robust currents. We prove this principle for PDEs that are periodic away from an interface. Our approach relies on semiclassical methods. It suggests novel perspectives for  the analysis of topologically protected transport.  
\end{abstract}

\section{Introduction}

In solid state physics, perfect insulators are modeled by periodic operators with spectral gaps. When two sufficiently distinct insulators are glued (imperfectly) along an edge, robust currents propagate along the interface. Strikingly, the existence of such currents depends on the bulk structure rather than on the nature of the interface. 

This phenomenon is called the bulk-edge correspondence. It is a universal principle that reaches beyond electronics, for instance in accoustics \cite{Yea:15}, photonics \cite{HR:07,RH:08}, fluid mechanics \cite{DMV:17,PDV:19} and molecular physics \cite{F:19}. While bulk and edge indices were introduced as early as \cite{Ha:82,TKNN:82,BES:94}, the mathematical formulation of the bulk-edge correspondence started with \cite{Ha:93}. It has been the object of various improvements, covering Landau Hamiltonians \cite{KRS:02,EG:02,KS:04a,KS:04b}, strong disorder \cite{EGS:05,GS:18,Ta:14}, $\Z_2$-topological insulators \cite{GP:13,ASV:13}, K-theoretic aspects \cite{BKR:17,Ku:17,BR:18,Br:19} and periodic forcing \cite{GT:18,ST:19}.

In this work, we derive the bulk-edge correspondence for PDEs that are periodic away from the interface. The most important characteristics of our approach is the use of microlocal techniques in a field traditionally dominated by K-theory and functional analysis.  It opens two promising perspectives:
\begin{itemize}
\item The quantitative analysis of topologically protected transport;
\item The geometric calculation of bulk/edge indices in terms of eigenvalue crossings.
\end{itemize}

\subsection{Setting and main result} We study the Schr\"odinger evolution of electrons in a two-dimensional material, $i\p_t \psi = P\psi$. The Hamiltonian $P$ is an elliptic selfadjoint second order  differential operator on $L^2(\R^2)$:
\begin{equation}\label{eq:5h}
P \de \sum_{|\az| \leq 2} a_\az(x) D_x^\az, \ \ \ a_\az(x)  \in C^\infty_b(\R^2,\C), \ \ \ D_x \de \dfrac{1}{i} \dd{}{x}, \ \ \ \az = (\az_1,\az_2) \in \N^2.
\end{equation}
The class \eqref{eq:5h} covers for instance Schr\"odinger operators with a potential $V(x) \in C^\infty_b(\R^2,\R)$ and a (transverse) magnetic field $\p_{x_1} A_2 - \p_{x_2} A_1$, where $A(x) \in C_b^\infty(\R^2,\R^2)$: 
\begin{equation}\label{eq:1t}
-\big(\nabla_{\R^2} + iA(x)\big)^2 + V(x).
\end{equation}
It also includes the stationary form of the wave equation
that appears in photonics and meta-material realizations of topological insulators \cite{HR:07,RH:08,Kea:13,LWZ:18}: 
\begin{equation}\label{eq:0l}
-\dive_{\R^2}\big( \sigma(x) \cdot \nabla_{\R^2} \big), \ \ \ \ \sigma(x) \in C^\infty_b\big(\R^2,M_2(\C)\big) \ \
 \text{Hermitian-valued.}
\end{equation}

In relation with solid state physics, we assume that for some $L > 0$, the coefficients $a_\az(x)$ behave like $\Z^2$-periodic functions in the bulk regions $x_2 \geq L$ and $x_2 \leq -L$, see \eqref{eq:5i}. The periodic structures above and below the strip $|x_2| \leq L$ may be different.
Hence, $P$ represents the junction along $|x_2| \leq L$ of two (potentially distinct) perfect crystals, respectively modeled by $\Z^2$-periodic operators
\begin{equation}
P_+ \de \sum_{|\az| \leq 2} a_{\az,+}(x) D_x^\az, \ \ \ \ 
P_- \de \sum_{|\az| \leq 2} a_{\az,-}(x) D_x^\az, \ \ \ \ a_{\az,\pm}(x) \in C^\infty_b(\R^2,\C) \ \ \ \Z^2\text{-periodic.}
\end{equation}
The class \eqref{eq:5h} inherently covers bulk materials with non-squared periodicity, see \S\ref{sec:21}. We refer to Figure \ref{fig:1} for a pictorial representation of $P$.

In the bulk ($|x_2| \geq L$), $P_+$ and $P_-$ govern the quantum dynamics. We assume that this region is insulating at energy $\lambda_0$. Mathematically, this means 
\begin{equation}\label{eq:5j}
\lambda_0 \notin \sigma_{L^2(\R^2)}(P_+) \ \textstyle{\bigcup}  \ \sigma_{L^2(\R^2)}(P_-).
\end{equation} 
In other words, $P_+$ and $P_-$ do not allow for plane wave-like propagation at energy $\lambda_0$. Thanks to periodicity and to \eqref{eq:5j}, the generalized eigenspace of $P_+$ with energy below $\lambda_0$ induces a (Bloch) vector bundle $\EE_+$ over the $2$-torus $(\Tt^2)^* = \R^2/(2\pi\Z)^2$, see \S\ref{sec:41}. The Chern integer $c_1(\EE_+)$ is a topological invariant of $\EE_+$, associated to $P_+$.  One defines similarly $\EE_-$ and $c_1(\EE_-)$ associated to $P_-$.

While $P$ behaves like an insulator at energy $\lambda_0$ in the bulk $|x_2| \geq L$, it may still support currents along the strip $|x_2| \leq L$. Following \cite{KRS:02,EG:02,EGS:05}, we define the interface conductivity as \vspace*{-1mm}
\begin{equation}\label{eq:5t}
\II_e(P) \ \de \ \Tr_{L^2(\R^2)} \Big( i\big[ P, f(x_1) \big] \cdot g'(P) \Big), \ \  
\end{equation}
where $f \in C^\infty(\R,\R)$ and $g \in C^\infty(\R,\R)$ are such that
\begin{equation}\label{eq:0j}
f(x_1) = \systeme{1 & \text{ for }  x_1 \geq \ell \ \  \\ 0 & \text{ for } x_1 \leq -\ell}, \ \ \ \ g(\lambda) = \systeme{1 \ \ \  \text{ for } \lambda \leq \lambda_0-\epsilon_0 \\ 0 \ \ \ \text{ for } \lambda \geq \lambda_0+\epsilon_0}.
\end{equation}
In \eqref{eq:0j}, $\ell$ is an arbitrary positive number; and $\epsilon_0$ is any positive number  such that $[\lambda_0-2\epsilon_0,\lambda_0+2\epsilon_0]$ does not intersect $\sigma_{L^2(\R^2)}(P_-) \cup \sigma_{L^2(\R^2)}(P_+)$. 

Since $-g'(\lambda)$ is a probability density, $-g'(P)$ is a density of states with energy near $\lambda_0$. The operator $e^{itP} \cdot  i [P,f(x_1)] \cdot e^{-itP} = \p_t e^{itP} f(x_1) e^{-itP}$ measures the quantum flux of charges from $\{ f(x_1) = 0\}$ to $\{f(x_1) = 1\}$: the current moving left to right. Hence, \eqref{eq:5t} represents a density of current per unit energy (near $\lambda_0$). In analogy with Ohm's law, $\II_e(P)$ is a (quantum) conductivity. See Figure \ref{fig:2}; and \S\ref{sec:22} for properties of $\II_e(P)$. 

Our main result is:

\begin{theorem}\label{thm:0} Let $P$ be an elliptic selfadjoint operator of the form \eqref{eq:5h} equal to $P_+$ for $x_2 \geq L$ and $P_-$ for $x_2 \leq -L$. If $\lambda_0$ satisfies \eqref{eq:5j} then
\begin{equation}\label{eq:5l}
2\pi \cdot \II_e(P) = c_1(\EE_+) - c_1(\EE_-).
\end{equation}
\end{theorem}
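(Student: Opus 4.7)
The plan is to reduce $\II_e(P)$ to a spectral flow at energy $\lambda_0$ and then identify this spectral flow with the difference of bulk Chern numbers, consistent with the paper's perspective of computing bulk/edge indices through eigenvalue crossings.

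I would first establish good localization and reduce to a periodic setting. Combes--Thomas estimates, applied using \eqref{eq:5j} and the ellipticity of $P$, show that the integral kernel of $g'(P)$ decays rapidly outside a strip $|x_2|\le L'$, because $g'(P_\pm)=0$. Combined with the compact $x_1$-support of $[P,f(x_1)]$, this makes the operator in \eqref{eq:5t} effectively concentrated on a bounded rectangle, so $\II_e(P)$ is a well-defined trace and is invariant under compactly supported deformations of $P$ (via cyclicity of the trace and the fact that such deformations commute with $f(x_1)$ up to trace-class operators). This lets me assume, henceforth, that $P$ is $\Z$-periodic in $x_1$ with period equal to a common multiple of the $x_1$-periods of $P_\pm$.

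Next I would apply the partial Floquet--Bloch transform in $x_1$ alone, writing $P\simeq\int_{S^1}^\oplus P(k_1)\,dk_1/(2\pi)$, each $P(k_1)$ acting on $L^2(\Tt_{x_1}\times\R_{x_2})$ and asymptotic as $x_2\to\pm\infty$ to a Bloch fiber of $P_\pm$. Using that the Floquet transform intertwines multiplication by $x_1$ with $i\p_{k_1}$ and that $f$ is step-like, the trace can be rewritten as
\[
2\pi\,\II_e(P) \;=\; \int_0^{2\pi} \Tr\bigl(\p_{k_1}P(k_1)\cdot g'(P(k_1))\bigr)\,dk_1.
\]
Because $\lambda_0$ lies in the bulk gap, only the discrete edge eigenvalues of $P(k_1)$ meet the support of $g'$, and the right-hand side reduces to the spectral flow $\Sf\bigl(\{P(k_1)\}_{k_1\in S^1};\lambda_0\bigr)$ counting signed crossings of $\lambda_0$ by the edge bands.

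It remains to identify this spectral flow with $c_1(\EE_+)-c_1(\EE_-)$. Further Bloch-decomposing $P_\pm$ in $x_2$ realizes the Chern numbers as Berry winding numbers of the spectral projections $\mathds{1}_{(-\infty,\lambda_0)}\bigl(P_\pm(k_1,k_2)\bigr)$ over $(\Tt^2)^*$. For each $k_1$, the operator $P(k_1)$ is a one-dimensional Schr\"odinger-type operator in $x_2$ with Bloch-matrix coefficients that interpolate between $P_-(k_1,\cdot)$ and $P_+(k_1,\cdot)$; a relative-index / Levinson-type argument should match its signed eigenvalue crossings to the difference of winding numbers of the bulk spectral projections, and integration in $k_1$ will then yield $c_1(\EE_+)-c_1(\EE_-)$. \emph{This last identification is the main obstacle}: translating a one-dimensional spectral flow in $k_1$ into a two-dimensional bulk topological invariant requires careful semiclassical analysis of edge bifurcations from the bulk bands — tracking edge states via WKB as they emerge from the continuous spectrum — together with a parameter-dependent index theorem. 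The microlocal machinery should deliver precisely this geometric, eigenvalue-crossings picture.
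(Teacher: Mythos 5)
Your proposal takes a genuinely different route from the paper, but it stalls exactly where the theorem gets its content, and the gap is not a loose end one can plausibly tidy up within the sketched framework.

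Your first two steps are fine: Lemma~\ref{lem:1c} and Lemma~\ref{lem:1f} in the paper establish trace-class and interface-independence (though via pseudodifferential order-function estimates rather than Combes--Thomas), and the reduction of $\II_e(P)$ to the spectral flow of the $x_1$-Floquet family $\zeta\mapsto P_\zeta$ at $\lambda_0$ is known --- the paper records it in \S\ref{sec:20}, citing \cite[Proposition 3]{ASV:13}. The problem is that after this reduction, the statement ``spectral flow $= c_1(\EE_+)-c_1(\EE_-)$'' \emph{is} the bulk-edge correspondence; you have restated Theorem~\ref{thm:0}, not proved it. The ``relative-index / Levinson-type argument'' and ``parameter-dependent index theorem'' you invoke are precisely the missing content, and there is no off-the-shelf version of them that applies here. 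In particular, the continuous setting carries an obstruction that the paper singles out explicitly in its comparison with earlier work: unlike for discrete Hamiltonians on $\ell^2(\Z^2,\C^d)$, there need not exist any energy $\lambda_2>\lambda_0$ above which the Bloch eigenbundle of $P_+$ (or $P_-$) becomes trivial. Without such a trivial reference bundle, a ``relative index'' of the one-dimensional $x_2$-operator $P(k_1)$ against its two bulk limits has no canonical normalization, and a Levinson-type count of bound states emerging from the essential spectrum does not, by itself, assemble into the Berry winding of $\mathds{1}_{(-\infty,\lambda_0)}(P_\pm(k_1,k_2))$ as $k_1$ is integrated out. Fixing this is where nearly all the work lies.

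The paper's strategy is structurally different and is designed precisely to circumvent that obstruction. It does not Floquet-decompose in $x_1$ and study spectral flow of $P(k_1)$. Instead it deforms $P$ to a two-scale operator $P_h$ whose slow variable is $x_2$ (the transverse direction), passes to the unitarily equivalent $\Pp_h$ on $\HH_1$ with operator-valued semiclassical symbol $\Pp(x_2,\xi):L^2(\Tt^2)\to L^2(\Tt^2)$, constructs a Grushin/effective Hamiltonian $E_{22}(\lambda)\in\Psi^{(22)}_h$, and expresses $\JJ_e$ in terms of the symbol asymptotics $E_\pm(\xi;\lambda)$ (Theorems~\ref{thm:1} and~\ref{thm:2}). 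Crucially, the triviality issue is then handled by concatenation through the auxiliary barrier $P_0$ (so $\JJ_e(P_-,P_+)=\JJ_e(P_-,P_0)+\JJ_e(P_0,P_+)$) together with Lemma~\ref{lem:1l}, which designs a frame $\vp_1,\dots,\vp_d$ containing $\Range(\Pi_+(\xi))$ as a subbundle and trivializing $\FF_0\oplus\FF_+$ by adjoining a bundle of Chern number $-c_1(\EE_+)$ (Appendix~\ref{app:1}). Only after that engineering does $E_+(\xi;\lambda)$ reduce to the clean form $(\lambda-\lambda_1)\Pi_1+(\lambda-\lambda_2)\Pi_2$, from which the residue computation produces the Berry curvature integral $c_1(\EE_+)$. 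None of this is reachable from the spectral-flow-in-$k_1$ vantage point as you have set it up, and the WKB/edge-bifurcation picture you allude to is, per the paper's own discussion, still a \emph{conjecture} (Conjecture~\ref{conj:1}) rather than an available tool.

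In short: your reduction to spectral flow is correct but circular, and the step you flag as ``the main obstacle'' is not an obstacle to be overcome within your plan --- it is the theorem itself, requiring either the effective-Hamiltonian machinery with a carefully chosen trivializing frame, or some other device to handle the nontrivial Bloch bundles of the continuous problem.
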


   \begin{figure}
{\begin{tikzpicture}
\node at (0,0) {\includegraphics[height=2in]{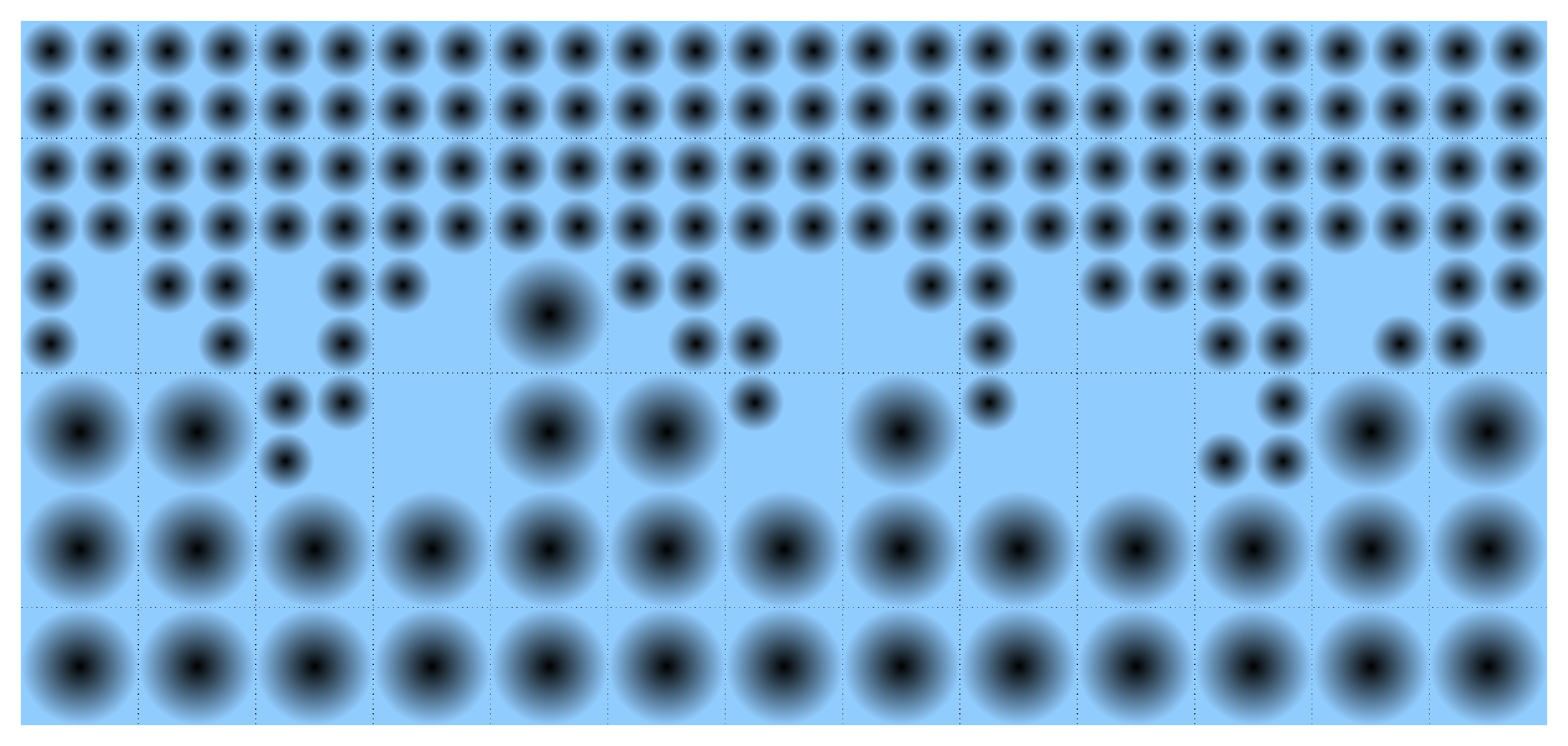}};
\draw[ultra thick,red,dashed] (-5.4,.8) -- (5.4,.8);
\draw[ultra thick,red,dashed] (-5.4,-.8) -- (5.4,-.8);
\node[red]  at (6.2,.8) {$x_2 = L$};
\node[red] at (6.4,-.8) {$x_2 = -L$};

%   \draw[decoration={brace,mirror,raise=5pt},decorate,thick]  (1.1,2.4) -- node[black,above=6pt] {$\supp(f')$} (-1.1,2.4);

\draw[decoration={brace,mirror,raise=5pt},decorate,thick]
  (5.2,.9) -- node[black,right=8pt] {$\text{bulk}$} (5.2,2.4);
\draw[decoration={brace,mirror,raise=5pt},decorate,thick]
  (5.2,-2.4) -- node[black,right=8pt] {$\text{bulk}$} (5.2,-.9);
\draw[decoration={brace,mirror,raise=5pt},decorate,thick]
  (-5.2,.7) -- node[black,left=8pt] {$\text{interface}$} (-5.2,-.7);

\draw[thick,->] (7.8-15,-.1+1.8-.8+.2) -- (7.8-15,1.2+1.8-.8+.2);
\draw[thick,->] (5.9+1.8-15,0+1.8-.8+.2) -- (7.2+1.8-15,0+1.8-.8+.2);

\node at (5.6+1.8-15,1.2+1.8-.8+.2) {$x_2$};
\node at (7.2+1.8-15,-.3+.5+1.8-.8+.2) {$x_1$};

   \end{tikzpicture}}
     \caption{\label{fig:1} Pictorial representation of a material covered in this work. A horizontal interface, $ |x_2| \leq L$, with arbitrary analytic structure, separates two distinct periodic medias (bulk), $x_2 \geq L$ and $x_2 \leq -L$. }
  \end{figure}

 Theorem \ref{thm:0} is the bulk-edge correspondence: the bulk and  edge indices are equal.
 Theorem \ref{thm:0} is non-trivial for systems with broken time-reversal symmetry (TRS):
$P \neq \ove{P}$. These include, for instance, Schr\"odinger operators with magnetic fields \eqref{eq:1t} and in meta-materials \eqref{eq:0l}. The formula \eqref{eq:5l} implies both quantization and topological robustness of $\II_e(P)$. Indeed, \eqref{eq:5l} shows that $2\pi \cdot \II_e(P) \in \Z$; and that an (even large) compact perturbation of $P$  preserves $P_+$ and $P_-$, hence the bundles $\EE_+$ and $\EE_-$ as well as the indices $c_1(\EE_+)$, $c_1(\EE_-)$ and $\II_e(P)$.

When $c_1(\EE_+) \neq c_1(\EE_-)$, Theorem \ref{thm:0} shows that $\II_e(P) \neq 0$; hence $g'(P) \neq 0$. A consequence is $\lambda_0 \in \sigma_{L^2(\R^2)}(P)$. Physically speaking,  the junction of two topologically distinct insulators must always be a conductor.

\subsection{Strategy} The proof  of Theorem \ref{thm:0} derives \eqref{eq:5l} starting from the formula \eqref{eq:5t} for $\II_e(P)$. At the most conceptual level, our inspiration comes from Fedosov's proof of the index theorem \cite{Fe:70} -- see also H\"ormander's account \cite[\S19.3]{Ho:85}. 

While our main result is not semiclassical (there is no asymptotic parameter $h \rightarrow 0$ in Theorem \ref{thm:0}), a key step of the proof is to deform $P$ to a semiclassical operator. Specifically, we construct in \S\ref{sec:24} an $h$-dependent operator
\begin{equation}\label{eq:5z}
P_h \de \sum_{|\az| \leq 2} c_\az(hx,x) D_x^\az \ : \ L^2(\R^2) \rightarrow L^2(\R^2)
\end{equation}
which is equal to $P$ when $h|x_2| \geq 1$, but whose coefficients admit a two-scale structure: $c_\az(x,y) \in C^\infty(\R^2 \times \Tt^2)$, $\Tt^2 = \R^2/\Z^2$. 
Lemma \ref{lem:1f} states that $\II_e(P)$ depends only on $P_+$ and $P_-$; in particular, $\II_e(P) = \II_e(P_h)$. The scaling \eqref{eq:5z} is semiclassical (in an unusual sense): if $U(x,y) \in C^\infty(\R^2 \times \Tt^2)$ and $u(x) = U(hx,x)$,
\begin{equation} \label{eq:1a}
P_h u(x) = \big(\Pp_h U\big)(hx,x) \ \ \ \ \text{where} \ \ \ \ 
\Pp_h \de \sum_{|\az| \leq 2} c_\az(x,y) (D_y+hD_x)^\az.
\end{equation}
The (leading) semiclassical symbol of $\Pp_h$ in $x$ is operator-valued. It equals
\begin{equation}\label{eq:1b}
\Pp(x,\xi) \de \sum_{|\az| \leq 2} c_\az(x,y) (D_y+\xi)^\az \ : \ L^2(\Tt^2) \ \rightarrow \ L^2(\Tt^2).
\end{equation}
We emphasize that $\II_e(P) = \II_e(P_h)$. Therefore, Theorem \ref{thm:0} reduces to a formula for a (semiclassically scaled) index $\II_e(P_h)$. This enables us to give a semiclassical proof.

\begin{figure}
\caption{The conductivity  $\II_e(P)$ measures the flux of particles of energy $\sim \lambda_0$, moving from $\{f(x_1)=0\}$ (left) to $\{f(x_1) = 1\}$ (right), along the interface, $|x_2| \leq L$. See Figure \ref{fig:1} for bulk and interface regions.}\label{fig:2}
{\hspace*{-2mm}\begin{tikzpicture}
\node at (0,0) {\includegraphics[height=2in]{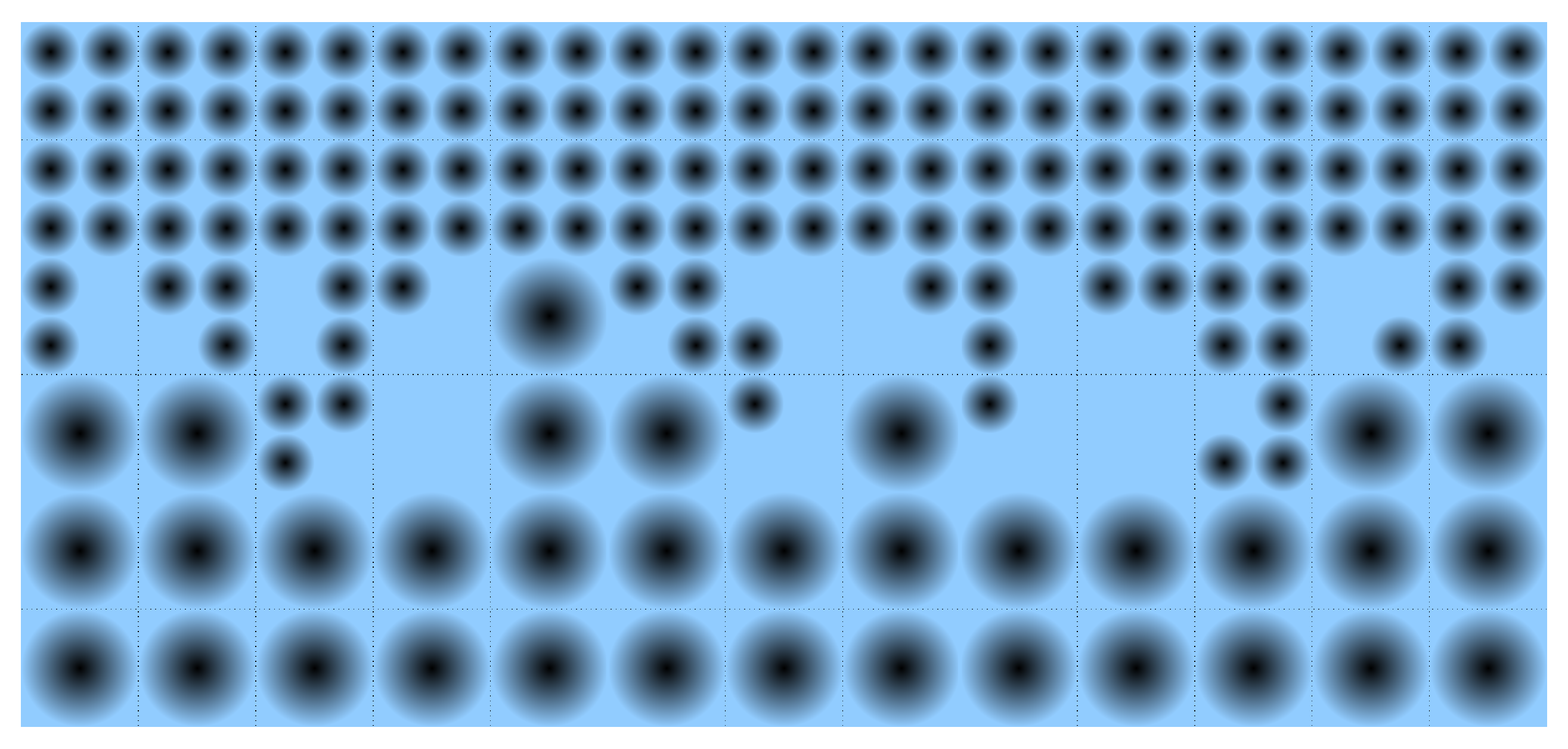}};

\draw[ultra thick, dashed] (-1.2,-2.5) -- (-1.2,2.7);
\draw[ultra thick, dashed] (1.2,-2.5) -- (1.2,2.7);

\node at (1.4,-2.7) {$x_1=\ell$};
\node at (-1.5,-2.7) {$x_1=-\ell$};

\draw[ultra thick, red, ->] (-2.3,0) -- (2.3,0);

%   \draw[decoration={brace,mirror,raise=5pt},decorate,thick]  (1.1,2.4) -- node[black,above=6pt] {$\supp(f')$} (-1.1,2.4);

\draw[decoration={brace,mirror,raise=5pt},decorate,thick]
  (5.2,2.4) -- node[black,above=6pt] {$\{f(x_1)=1\}$} (1.3,2.4);

\draw[decoration={brace,mirror,raise=5pt},decorate,thick]
  (-1.3,2.4) -- node[black,above=6pt] {$\{f(x_1)=0\}$} (-5.2,2.4);

\draw[ultra thick,red,dashed] (-5.4,.8) -- (5.3,.8);
\draw[ultra thick,red,dashed] (-5.4,-.8) -- (5.3,-.8);

% \draw[thick,->] (7.8-15,-.1) -- (7.8-15,1.2);  \draw[thick,->] (5.9+1.8-15,0) -- (7.2+1.8-15,0); \node at (5.6+1.8-15,1.3) {$x_2$}; \node at (7.2+1.8-15,-.3) {$x_1$};

   \end{tikzpicture}}
\end{figure}

We rely on the spectral theory of two-scale operators \eqref{eq:5z}, developed in G\'erard--Martinez--Sj\"ostrand \cite{GMS:91}. It constructs a space $\HH_1 \subset \SSS'(\R^2 \times \Tt^2)$, isomorphic to $L^2(\R^2)$, such that $P_h$ on $L^2(\R^2)$ and $\Pp_h$ on $\HH_1$ are conjugated. Roughly speaking, distributions in $\HH_1$ are -- up to normalization -- $L^2(\R^2)$-multiples of the Dirac mass on 
\begin{equation}
\left\{ (x,y) \in \R^2 \times \Tt^2 :\  x = h(y+m) , \  m \in \Z^2\right\}.
\end{equation}
See \S\ref{sec:31}.  The equivalence between $P_h$ and $\Pp_h$ realizes $\II_e(P)$ as a semiclassical trace: we will see that
\begin{equations}\label{eq:0m}
\II_e(P) = \II_e(P_h) = \Tr_{L^2(\R^2)} \Big( \big[ P_h, f(hx_1) \big] g'(P_h) \Big)
\\
 = \Tr_{\HH_1} \Big( \big[ \Pp_h, f(x_1) \big] g'(\Pp_h) \Big) = \II_e(\Pp_h). 
\end{equations}

Semiclassical trace expansions have a long history. The most celebrated example is the semiclassical Weyl law, see e.g. \cite[\S9]{DS:99}, \cite[\S14]{Zw:12} and references given there. For the present work, the most relevant papers are due to Dimassi, Zerzeri and Duong \cite{Di:93,DZ:03,DD:14}. For instance, \cite{Di:93} shows that if $\vp \in C^\infty_0(\R)$ satisfies
\begin{equation}
\supp(\vp) \  \text{$\textstyle{\bigcap}$}  \bigcup_{\substack{|x| \geq M , \ \xi \in \R^2 }} \sigma_{L^2(\Tt^2)}\big( \Pp(x,\xi)  \big) \  = \ \emptyset
\end{equation}
for some $M>0$, 
then $\vp(P_h)$ is trace-class on $L^2(\R^2)$ and as $h \rightarrow 0$,
\begin{equation}\label{eq:0k}
\Tr_{L^2(\R^2)} \big( \vp(P_h) \big) \ \sim \ \sum_{j \geq 0} b_j \cdot h^{j-2}, \ \ \ \ b_0 =  \int_{\R^2 \times (\Tt^2)^*} \Tr_{L^2(\Tt^2)} \big( \vp\big( \Pp(x,\xi) \big) \ \dfrac{dxd\xi}{(2\pi)^2}.
\end{equation}

Since $P_h$ on $L^2(\R^2)$ and $\Pp_h$ on $\HH_1$ are conjugated, \eqref{eq:0k} also holds for $\Tr_{\HH_1} \big( \vp(\Pp_h) \big)$. Hence, Dimassi's result suggest that
\begin{equation}\label{eq:1x}
\II_e(\Pp_h) \ \sim \  \sum_{j=0}^\infty a_j \cdot h^{j-2}, \ \ \ \ h \rightarrow 0.
\end{equation}
But $\II_e(\Pp_h) = \II_e(P)$ does not depend on $h$. Thus, if \eqref{eq:1x} holds, then $a_j = 0$ for all $j \neq 2$; and $a_2 = \II_e(P)$. The framework of G\'erard--Martinez--Sj\"ostrand \cite{GMS:91} reduces $\Pp_h$ to a discrete  effective Hamiltonian $E_{22}(\lambda)$, realized as a pseudodifferential operator with matrix-valued symbol, $E_{22}(x,\xi;\lambda)$. The singular values of $E_{22}(\lambda)$ describe the spectral aspects of $\Pp_h$ relevant in the computation of $\II_e(\Pp_h)$. 

The bulk-edge correspondence for discrete Hamiltonians has been analyzed in \cite{KRS:02,EG:02,EGS:05}. Here we need an approach adapted to both our microlocal framework and to singular value problems. Section \ref{sec:33} extends arguments from \cite{EGS:05} to singular (versus eigenvalue) problems. We will eventually express $\II_e(P)$ in terms of the asymptotics $E_\pm(\xi;\lambda)$ of $E_{22}(x,\xi;\lambda)$:
\begin{equation}
\II_e(P) = \JJ(E_+) - \JJ(E_-),
\end{equation}
and provide an explicit formula for $\JJ(E_+)$ and $\JJ(E_-)$ -- see \eqref{eq:5d} below.

Without further considerations, recovering the Chern integers $c_1(\EE_\pm)$ from $\JJ(E_\pm)$ is technically difficult. In \S\ref{sec:4}, we design a specific effective Hamiltonian which tremendously simplifies the calculation. This completes the proof of Theorem \ref{thm:0}.

\subsection{Relation to earlier work} The bulk-edge correspondence has been intensely studied in relation to the integer quantum Hall effect, where the magnetic field is constant. We refer to \cite{KRS:02,EG:02,EGS:05} for discrete models; \cite{KS:04a,KS:04b} for K-theory proofs in the continuum; \cite{CG:05} for properties of the edge index; and \cite{Ta:14} for results covering strong disorder.

The analysis on continuous (versus discrete) models is more intricate. One reason is that after increasing the number of degrees of freedom, discrete periodic Hamiltonians are asymptotically constant. 

More importantly, there is a subtle situation that can only happen in the continuous setting. For discrete Hamiltonians on $\ell^2(\Z^2,\C^d)$, the Bloch eigenbundle below sufficiently high energy is necessarily trivial: it is simply $(\Tt^2)^* \times \C^d$. However, for continuous Hamiltonians, there may not exist $\lambda_2 > \lambda_0$ with $\lambda_2 \notin \sigma_{L^2(\R^2)}(P_+)$ such that the Bloch eigenbundle below energy $\lambda_2$ has trivial topology. 

This generates major difficulties in the proof. In particular, while we reduce our continuous Hamiltonian to a discrete system, the relevant spectral quantities become singular (characteristic) values rather than eigenvalues. In the discrete case, the Hamiltonian is already in reduced form; those singular values are eigenvalues; and our proof considerably simplifies. In that case it would resembles that of \cite{EGS:05}.

While most of the aforementioned bulk-edge correspondence works rely on functional analysis or K-theory, our approach to Theorem \ref{thm:0} is fully based on PDE techniques. It suggests a microlocal conjecture related to the quantitative aspects of transport in topological systems. We refer to \S\ref{sec:14} for the corresponding discussion.

For the sake of simplicity, Theorem \ref{thm:0} focuses on second-order operators. The microlocal aspects of the proof generalize to all elliptic pseudodifferential operators whose spectrum is bounded below. The present work relies on the effective Hamiltonian theory developed by G\'erard--Martinez--Sj\"ostrand \cite{GMS:91}. This paper also covers constant magnetic fields: $A(x) = [B_1 x_2, B_2 x_1]^\top$ in \eqref{eq:1t}. Consequently, we expect that our proof extends to this case.

\subsection{Perspectives}\label{sec:14} Our microlocal framework suggests exciting perspectives. In the discussion below, we assume that $\rk(\EE_+) = \rk(\EE_-) = n$.

 Let $\Pp_h$ be the semiclassical Hamiltonian \eqref{eq:1a} with symbol $\Pp(x,\xi) : L^2(\Tt^2) \rightarrow L^2(\Tt^2)$, see \eqref{eq:1b}. It does not depend on $x_1$, see \S\ref{sec:24}. Below, we emphasize this independence via the notations $\Pp(x_2,\xi) = \Pp(x,\xi)$ and $c_\az(x_2,y) = c_\az(x,y)$. The family $\big\{ \Pp(x_2,\xi) : \xi \in [0,2\pi]^2 \big\}$ forms the Floquet  decomposition (see e.g. \cite[\S16]{RS:78}) of
\begin{equation}
\Pp(x_2) \de 
\sum_{|\az| \leq 2} c_\az(x_2,y) D_y^\az \ : \ L^2(\R^2) \ \rightarrow \ L^2(\R^2).
\end{equation}

For $x_2$ sufficiently positive, $\Pp(x_2) = P_+$ hence $\lambda_0 \notin \sigma_{L^2(\R^2)} \big( \Pp(x_2) \big)$. Since $\rk(\EE_+) = n$, $\lambda_0$ lies precisely in the $n$-th gap of $\Pp(x_2)$, which is a fortiori open. Let $\lambda_1(x_2,\xi) \leq \dots \leq \lambda_j(x_2,\xi) \leq \dots$ be the eigenvalues of $\Pp(x_2,\xi)$. 
Suppose for a moment that 
\begin{equation}\label{eq:5x}
\forall (x_2,\xi) \in \R^2 \times [0,2\pi]^2, \ \ \ \ \lambda_n(x_2,\xi) < \lambda_{n+1}(x_2,\xi),
\end{equation}
Note that the condition \eqref{eq:5x} is weaker than assuming that $\Pp(x_2)$ has an open $n$-th $L^2(\R^2)$-gap for every $x_2 \in \R$. Yet, it allows for the construction of a smooth, $x_2$-parametrized family of (Bloch) bundles $\EE(x_2) \rightarrow (\Tt^2)^* = \R^2/(2\pi\Z)^2$, whose fibers at $\xi \in (\Tt^2)^*$ are the first $n$ eigenspaces of $\Pp(x_2,\xi)$. These bundles depend continuously on $x_2$. For $x_2$ sufficiently positive, $\EE(x_2) = \EE_+$. We deduce that
\begin{equations}
n = \rk(\EE_+) = \lim_{n\rightarrow +\infty} \rk\big(\EE(x_2)\big) = \lim_{n\rightarrow -\infty} \rk\big(\EE(x_2)\big).
\end{equations}
The assumption $\rk(\EE_-) = \rk(\EE_+) = n$ ensures that $\EE(x_2) = \EE_-$ for $x_2$ sufficiently negative. Therefore the family of bundles $\EE(x_2)$ interpolates smoothly between $\EE_+$ and $\EE_-$ as $x_2$ runs through $\R$. We deduce that when \eqref{eq:5x} holds, $c_1(\EE_+) = c_1(\EE_-)$. Theorem \ref{thm:0} yields $\II_e(P) = 0$.

In other words, if $\II_e(P) \neq 0$, then the $n$-th and $n+1$-th dispersion surfaces must intersect. The union of Bloch varieties $\left\{ (x,\xi;\lambda) \ : \ \lambda \in \sigma_{L^2}\big(\Pp(x_2,\xi) \big) \right\}$
must have singularities along the $n$-th and $n+1$-th dispersion surfaces.

  This observation motivates a conjecture that highlights microlocal aspects of the bulk-edge correspondence. Introduce the midpoint
\begin{equation}
\mu(x_2,\xi) \de \dfrac{\lambda_n(x_2,\xi) + \lambda_{n+1}(x_2,\xi)}{2}.
 \end{equation}
For $\delta > 0$, fix $G \in C^\infty\big(\R^2 \times (\Tt^2)^* \times \R\big)$ with
\begin{equation}
G(x,\xi;\lambda) = \systeme{1 \ \ \  \text{ for } \lambda \leq \mu(x_2,\xi)-\delta \\ 0 \ \ \ \text{ for } \lambda \geq \mu(x_2,\xi)+\delta}.
\end{equation}
Set $\Ww(x,\xi) = \p_\lambda G\big(x,\xi;\Pp(x_2,\xi)\big)$. This symbol is valued in linear operators on $L^2(\Tt^2)$; its support lies in the set
\begin{equation}
\ZZ_\delta \de 
\left\{ (x,\xi) \in \R^2 \times \R^2 \ : \ \big| \lambda_{n+1}(x_2,\xi) - \lambda_n(x_2,\xi) \big| \leq 2\delta \right\}.
\end{equation}
See Figure \ref{fig:3}. As $\delta \rightarrow 0$, $\ZZ_\delta$ converges to the set of eigenvalue crossings (singularities in the Bloch variety), $\ZZ_0 = \{ (x,\xi) : \lambda_n(x_2,\xi) = \lambda_{n+1}(x_2,\xi)\}$. Hence,
the quantization $\Ww_h$ of $\Ww(x,\xi)$ microlocalizes at arbitrarily small distance to $\ZZ_0$. It acts on the same space $\HH_1$ as $\Pp_h$. As $h \rightarrow 0$, we expect that $\Ww_h$ plays the role of $g' (\Pp_h)$ in \eqref{eq:0m}:

\begin{figure}
\floatbox[{\capbeside\thisfloatsetup{capbesideposition={right,center},capbesidewidth=3in}}]{figure}[\FBwidth]
{\hspace*{-5mm}\caption{Pictorial representation of dispersion surfaces, $\lambda_n$ (blue) and $\lambda_{n+1}$ (red), with midpoint area of width $2\delta$ (gray). The symbol $\Ww$ is supported where $\big|\lambda_{n+1}(x_2,\xi) - \lambda_n(x_2,\xi)\big| \leq 2\delta$, that is, where dispersion surfaces intersect the gray area.}\label{fig:3}}
{\begin{tikzpicture}

\node at (0,0) {\includegraphics[height=2.3in]{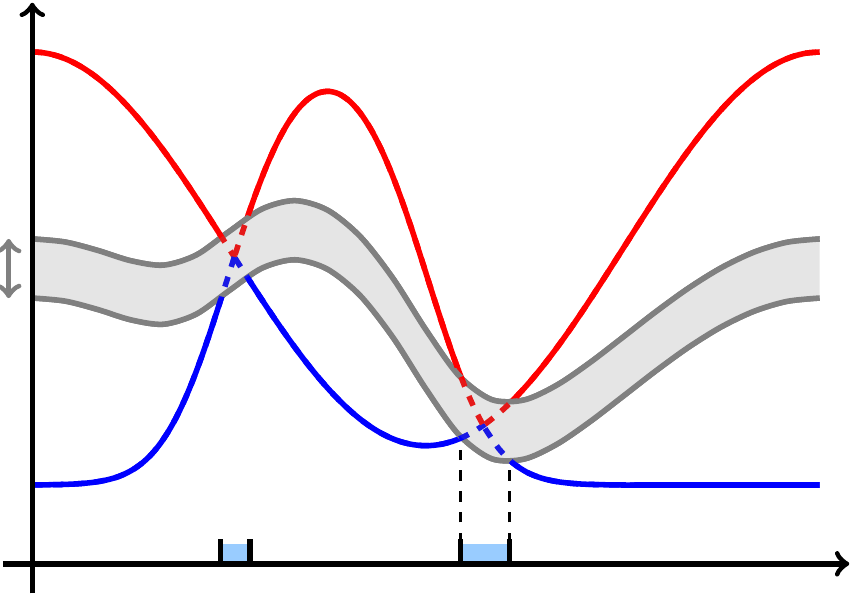}}; 

\node at (3.8,-3) {$(x,\xi)$}; 

\node at (-4.2,2.8) {$\lambda$}; 

\node at (-1.8,-2) {supp$(\mathbb{W})$}; 

\node[red]  at (2,2.2) {$\lambda_{n+1}(x_2,\xi)$}; 
\node[blue]  at (3,-1.4) {$\lambda_n(x_2,\xi)$};

\node[black!70] at (-4.5,.25) {$2\delta$};

\node[black!70]  at (3.3,.95) {$\mu(x_2,\xi)$};

   \end{tikzpicture}}
   \vspace*{-.3cm}
\end{figure}

\begin{conj}\label{conj:1} Assume $\rk(\EE_+) = \rk(\EE_-) =n$. There exists $\delta_0 > 0$ such that
\begin{equation}\label{eq:2b}
\delta \in (0,\delta_0) \ \ \ \Rightarrow \ \ \
\II_e(P) = 
\Tr_{\HH} \Big( \big[ \Pp_h, f(x_1) \big] \cdot \Ww_h \Big) + O(h^\infty).
\end{equation}
\end{conj}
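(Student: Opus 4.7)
The conjecture compares two semiclassical traces on the GMS space $\HH_1$: the exact edge conductivity $\II_e(P) = \Tr_{\HH_1}\big([\Pp_h, f(x_1)] g'(\Pp_h)\big)$ obtained in \eqref{eq:0m}, and the proposed replacement $\Tr_{\HH_1}\big([\Pp_h, f(x_1)] \Ww_h\big)$. The plan is to interpolate between the scalar cutoff $g(\lambda)$ and the band-adapted cutoff $G(x,\xi;\lambda)$, and to show that the resulting family of traces is invariant along the homotopy modulo $O(h^\infty)$.

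\textbf{Step 1 (Semiclassical functional calculus).} By the Helffer--Sj\"ostrand contour representation combined with the semiclassical calculus for operator-valued symbols \cite{GMS:91,DZ:03} adapted to the GMS framework of \S\ref{sec:31}, $g'(\Pp_h)$ is a $\Psi$DO on $\HH_1$ whose principal symbol is $g'(\Pp(x_2,\xi))$ (scalar functional calculus of the fiber operator on $L^2(\Tt^2)$), while $\Ww_h$ has principal symbol $\p_\lambda G(x,\xi;\Pp(x_2,\xi))$. The symbol of $\Ww_h$ vanishes outside $\ZZ_\delta$: if $(x,\xi) \notin \ZZ_\delta$, no eigenvalue $\lambda_j(x_2,\xi)$ of $\Pp(x_2,\xi)$ lies within $\delta$ of $\mu(x_2,\xi)$, so $\p_\lambda G(x,\xi;\lambda_j(x_2,\xi)) = 0$.

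\textbf{Step 2 (Homotopy and cyclicity).} For $s \in [0,1]$ set $r_s(x,\xi;\lambda) = (1-s) g(\lambda) + s\, G(x,\xi;\lambda)$ and let $R_s = \Op_h\big(\p_\lambda r_s(x,\xi;\Pp(x_2,\xi))\big)$. Modulo $O(h^\infty)$, $R_0 = g'(\Pp_h)$ and $R_1 = \Ww_h$, so the conjecture reduces to $I(1) - I(0) = O(h^\infty)$, with $I(s) = \Tr_{\HH_1}\big([\Pp_h, f(x_1)] R_s\big)$. Differentiating yields $I'(s) = \Tr_{\HH_1}\big([\Pp_h, f(x_1)] K_h\big)$, where $K_h = \Op_h(\p_\lambda k(x,\xi;\Pp(x_2,\xi)))$ and $k = G - g$. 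By trace cyclicity (applicable since $f(x_1) K_h$ and its products with $\Pp_h$ are trace-class), $I'(s) = -\Tr_{\HH_1}\big(f(x_1)[\Pp_h, K_h]\big)$, and the Moyal expansion of $[\Pp_h, K_h]$ involves iterated operator-valued Poisson brackets of $\p_\lambda k(x,\xi;\Pp(x_2,\xi))$ with $\Pp(x,\xi)$. Since $k = G - g$ is the difference of two cutoffs that each separate the first $n$ bands of $\Pp(x_2,\xi)$ from the rest whenever the $n$-th gap is open, every symbol in this expansion vanishes on the complement of $\ZZ_\delta$.

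\textbf{Main obstacle.} Principal-symbol vanishing only yields $I'(s) = O(1)$, not $O(h^\infty)$. Upgrading to full decay requires showing that \emph{every} order in the Moyal expansion of $\Tr\big(f(x_1)[\Pp_h, K_h]\big)$ integrates to zero. The most promising route is to apply, parametrically in $s$, the effective-Hamiltonian reduction of \S\ref{sec:33}: for each $s$, $R_s$ reduces to a singular-value problem for an $s$-dependent matrix-valued symbol $E_{22}^{(s)}(x,\xi;\lambda)$, and the reduced trace should depend only on the homotopy class of the fiberwise projector-valued family $\Pi_s(x,\xi) = r_s(x,\xi;\Pp(x_2,\xi))$. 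Since $\Pi_0$ and $\Pi_1$ agree outside $\ZZ_\delta$ and interpolate within it, the $s$-derivative of the reduced trace should concentrate entirely in $\ZZ_\delta$, and the refined singular-value analysis near band crossings should control it to all orders in $h$. Rigorously carrying out this semiclassical homotopy invariance at the level of the effective Hamiltonian---essentially an invariance of a Callias-type index under a continuous deformation of cutoffs---is the main technical obstacle, and is the step where a novel refinement of the singular-value analysis in \S\ref{sec:33} seems required.
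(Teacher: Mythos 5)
The statement you were asked to prove is labeled a \emph{Conjecture} in the paper, and the paper does not prove it. The only verification the author gives is the degenerate case $\ZZ_0 = \emptyset$ (equivalently, when \eqref{eq:5x} holds): there one checks that both sides of \eqref{eq:2b} vanish, the left because the interpolating bundle family $\EE(x_2)$ forces $c_1(\EE_+) = c_1(\EE_-)$ and hence $\II_e(P)=0$, the right because $\ZZ_\delta = \emptyset$ for small $\delta$ forces $\Ww_h = 0$. Beyond that, the author only cites circumstantial evidence from the Dirac-cone literature (\cite{FLW:16,LWZ:18,Dr:19a,DW:19,Dr:18,Dr:19b}). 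So there is no ``paper's own proof'' to compare your attempt against; you and the paper agree that this is open, and your proposal is explicitly a strategy sketch rather than a proof, since you yourself name an unresolved ``Main obstacle.''

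On the substance of the sketch, two remarks. First, your Step 2 claim that $R_0 = g'(\Pp_h)$ modulo $O(h^\infty)$ is not correct: the semiclassical functional calculus of \S\ref{sec:312} gives $g'(\Pp_h) = \Op_h\big(g'(\Pp(x,\xi))\big) + O_{\Psi_h(1)}(h)$, i.e.\ agreement only to first order, and the subleading symbol corrections are precisely the kind of terms whose trace against $\big[\Pp_h, f(x_1)\big]$ one must control. There is also a structural asymmetry in your homotopy: the endpoint at $s=0$ you wish to reach, $g'(\Pp_h)$, is a genuine function of the operator, whereas all the $R_s$ (including $\Ww_h$ at $s=1$) are Weyl quantizations of symbols, and these two notions coincide only at leading order. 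Second, your diagnosis of the core difficulty is correct and is indeed where the matter stands: principal-symbol cancellation gives $I'(s) = O(1)$, and upgrading to $O(h^\infty)$ would require an all-orders argument — presumably through an $s$-parametrized effective Hamiltonian as you suggest — that is not available. If it helps to orient further work, note that the paper's own proof of Theorem~\ref{thm:0} (via Theorems \ref{thm:1}--\ref{thm:2}) already shows that $\Tr_{\HH_1}\big([\Pp_h,f(x_1)]g'(\Pp_h)\big)$ is an $h$-independent quantity governed entirely by the asymptotic symbols $E_\pm(\xi;\lambda)$ of the effective Hamiltonian; the content of Conjecture~\ref{conj:1} is that this quantity survives a further microlocalization near $\ZZ_0$, and any rigorous attack should probably work at that reduced level, as your final paragraph proposes.
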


Our conjecture predicts that, in the semiclassical limit, functions that are microlocalized away from the wavefront set $\WF_h(\Ww_h) \subset \ZZ_\delta$ do not contribute to $\II_e(P)$.  
It suggests that a set of edge states microlocalized on $\ZZ_0$ controls the fundamental aspects of topological transport. They should be WKB-type solutions of a normal form equation for $\Pp(x,\xi)$ near $\ZZ_0$ -- expressed as a pseudodifferential system. We refer to \cite{Bu:87,GRT:88,HS:90,DGR:02,PST:03a,PST:03b,DGR:04,FT:16} for WKB solutions in two-scale backgrounds of the form \eqref{eq:5z}. The work \cite{FT:16} also address Bloch bands with non-trivial topology, most relevant here. 

The fundamental role played by $\ZZ_0$ highlights the significance of eigenvalue crossings. When $\ZZ_0 = \emptyset$ -- equivalently, when \eqref{eq:5x} holds -- our conjecture is true. Indeed, on one hand, we saw that \eqref{eq:5x} implies $\II_e(P) = 0$; on the other hand, $\ZZ_\delta = \emptyset$ for $\delta$ sufficiently small, thus $\Ww_h = 0$ -- see Figure \ref{fig:4}.

\begin{figure}
\floatbox[{\capbeside\thisfloatsetup{capbesideposition={right,center},capbesidewidth=3in}}]{figure}[\FBwidth]
{\hspace*{-5mm}\caption{If the $n$-th and $n$-th dispersion surfaces do not intersect for all $(x,\xi)$, then for $\delta$ small enough they do not cross the midpoint area. The support of $\Ww$ is empty. }\label{fig:4}}
{\begin{tikzpicture}

\node at (0,0) {\includegraphics[height=2.3in]{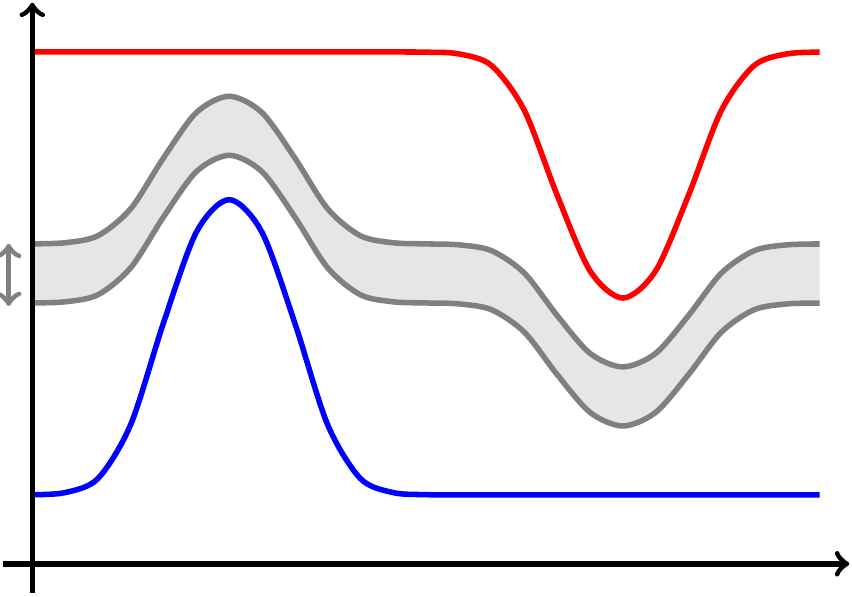}}; 

%\node[red]  at (7,2.3) {$\lambda_{n+1}(x,\xi)$};
%\node[blue]  at (7,-2.8) {$\lambda_n(x,\xi)$};

\node at (3.8,-3) {$(x,\xi)$}; 

\node at (-4.2,2.8) {$\lambda$};

%\node[red] at (7.5,.5) {$G=1$};
%\node[blue] at (7.5,-1.5) {$G=0$};

\node[red]  at (3,2.75) {$\lambda_{n+1}(x_2,\xi)$}; 
\node[blue]  at (3.2,-1.6) {$\lambda_n(x_2,\xi)$};

\node[black!70]  at (3.3,.9) {$\mu(x_2,\xi)$};

\node[black!70] at (-4.5,.25) {$2\delta$}; 

\node at (-2,-2.2) {$\supp(\mathbb{W}) = \emptyset$};

   \end{tikzpicture}}
   \vspace*{-.3cm}
\end{figure}

The simplest types of eigenvalue crossings are Dirac point, which typically appear in honeycomb structures \cite{FW:12}; see also \cite{BC:18,FLW:18,LWZ:18,Aea:18}. For small gap-opening perturbations and a scaling that is closer to homogenization than semiclassical analysis,  \cite{FLW:16,LWZ:18} constructed genuine edge states. They exhibit spectral concentration near the momentum associated to the conical crossing.\footnote{These are however not concentrated in position. This seems to be a feature of the homogenization -- rather than semiclassical -- scaling.} This supports the conjecture.

In \cite{Dr:19a,DW:19}, we completed the analysis via a full identification of edge states. These papers essentially provide a converse to \cite{FLW:16,LWZ:18}: all edge states are of the form derived there. This yields the explicit value of $\II_e(P)$;\footnote{In \cite{Dr:19a,DW:19}, we defined the edge index as a spectral flow. Modulo a factor $2\pi$, it equals \eqref{eq:5t} -- see e.g. \cite[Proposition 3]{ASV:13}.} see also \cite{Dr:19b} for the separate bulk index computation. For instance, if a weak magnetic field breaks TRS, 
\begin{equation}
2\pi \cdot \II_e(P) = \pm 2 = c_1(\EE_+) - c_1(\EE_-).
\end{equation}
The sign depends only on the orientation of the bulk magnetic field seen by Dirac point Bloch modes. Hence \cite{Dr:19a,Dr:19b,DW:19} supports our conjecture: bulk/edge indices can computed from a local understanding of the operator near eigenvalue crossings.

A more sophisticated case concerns eigenvalue crossings arising along a topologically non-trivial loop. For a 1D model built up from \cite{FLW:17,DFW:18}, we computed the bulk/edge indices as the winding number of a function from this loop to $\C\setminus \{0\}$ \cite{Dr:18}. This result also corroborates our conjecture.

\subsection{Notations}

\begin{itemize}
\item $\C^+$ denotes the upper half-plane $\{ \lambda \in \C : \Im \lambda > 0\}$.
\item $\Tt^2$ is the two-torus $\R^2/\Z^2$. We use the notations $\Tt^2_* = (\Tt^2)^* =  \R^2/(2\pi\Z)^2$ for its dual.
\item $C^\infty_b(\R^2)$ denotes smooth functions with bounded derivatives at any order; $\SSS(\R^2)$ denotes the Schwartz class. 
\item $P$ is the original Hamiltonian. It has bulk modeled by periodic operators $P_+$ and $P_-$ which have a spectral gap $[\lambda_0-2\epsilon,\lambda_0+2\epsilon]$ centered at $\lambda_0$, see \S\ref{sec:21}.
\item $P_h$ is a two-scale deformation of $P$ that preserves  $P_+$ and $P_-$, see \S\ref{sec:24}.
\item $\Pp_h$ is a semiclassical operator that is unitarily equivalent to $P_h$. It acts on a space $\HH^1 \subset \SSS'(\R^2 \times \Tt^2)$.  It has semiclassical symbol $\Pp(x,\xi)$ acting on $L^2(\Tt^2)$, see \S\ref{sec:32}. For $\pm x_2 \geq 1$, it equals $P_\pm(\xi)$, see \eqref{eq:1y}.
\item $Q$, $Q_h$, $\Qq_h$, $\Qq(x,\xi)$ and $\Qq_\pm(\xi)$ are respectively equal to $\psi(P)$, $\psi(P_h)$, $\psi(\Pp)$, $\psi\big(\Pp(x,\xi)\big)$ and $\psi\big(\Pp_\pm(\xi)\big)$, where $\psi$ satisfies \eqref{eq:3k}. 
\item $\II_e(P)$ is the edge index of $P$, eventually denoted $\II(P_-,P_+)$. Its definition requires two functions $f$ and $g$, see \S\ref{sec:21}. For convenience, we will use $\JJ_e(P) = -i \cdot \II_e(P)$ past \S\ref{sec:21}.
\item $\tg$ is an almost analytic extension of $g$; $\Omega$ is a bounded neighborhood of $\supp(\tg)$; and $\Omega' \subset \Omega$ satisfies \eqref{eq:1z}.
\item The real part of an operator $T$ is the selfadjoint operator $\Re(T) = \frac{T+T^*}{2}$.
\item If $u_1, \dots, u_n$ are vectors, $[u_1, \dots, u_n]$ denotes the subspace $\C u_1 \oplus \dots \oplus \C u_n$.
 \item Given an order function $m$ and $a \in S(m)$, the (classical) Weyl quantization of $a$ is $\Op(a) \in \Psi(m)$ (see \S\ref{sec:22}) and the semiclassical Weyl quantization of $a$ is $\Op_h(a) \in \Psi_h(m)$ (see \S\ref{sec:31}).
 \item We will use functional spaces $\HH_1$ and $\HH_2$ defined in \cite{GMS:91}, associated to classes of symbols $S^{(jk)}(m)$ and operators $\Psi_h^{(jk)}(m)$, see \S\ref{sec:314}.
 \item $\EE_\pm$ are vector bundles over $(\Tt^2)^*$, associated to $P_\pm$ and $\lambda_0$. They have Chern number $c_1(\EE_\pm)$, see \S\ref{sec:41}.
 \item $d^1\lambda$ denotes a one-dimensional line element in $\C$ and $d^2\lambda$ denotes the Lebesgue measure on $\C$.
 \item We use the notation $\pm$ when a statement is true for both $+$ and $-$. For instance, ``$\pm u(x) \geq 0$ or $\mp v(x) = 0$" means both ``$u(x) \geq 0$ or $-v(x) \geq 0$" and   ``$-u(x) \geq 0$ or $v(x) \geq 0$".
 \item In some statements, we use the exponent $-\infty$ to express that the statement holds for any exponent $-s$, $s > 0$.
\end{itemize}

\noindent{\bf Acknowledgments.} I am grateful to F. Faure, J. Shapiro, M. I. Weinstein and M. Zworski for valuable discussions. I thankfully acknowledge support from NSF DMS-1440140 (MSRI, Fall 2019) and DMS-1800086, and from the Simons Foundation through M. I. Weinstein's Math+X investigator award \#376319.

\section{The edge index}

We review here the properties of the edge index $\II_e(P)$,
owing to \cite{KRS:02,EG:02,EGS:05,CG:05,Ba:19a}. From Theorem \ref{thm:0}, we anticipate that $\II_e(P)$ depends only on
\begin{equation}
\Pi_\pm \de \1_{(-\infty,\lambda_0]}(P_\pm).
\end{equation}
This is supported by standard results, recalled in  \S\ref{sec:21}. We provide our own proofs in \S\ref{sec:23}, relying on pseudodifferential calculus (reviewed in \S\ref{sec:22}). They initiate the reader to the semiclassical techniques of \S\ref{sec:3}.

This independence property motivates the search for a formula expressing $\II_e(P)$ in terms of $\Pi_\pm$ only: the bulk-edge correspondence. On one hand, \eqref{eq:5t} defines $\II_e(P)$ as the trace of a classical ($h=1$) pseudodifferential operator. On the other hand, a widely developed area of spectral asymptotics expands semiclassical ($h \rightarrow 0$) traces in powers of $h$. This suggests to look for a deformation of $P$ to a semiclassical operator, that preserves the edge index. We realize this in \S\ref{sec:24}.

\subsection{Edge index}\label{sec:21} 

Let $P$ be a partial differential operator of order $2$:
\begin{equation}\label{eq:5s}
P \de \sum_{|\az| \leq 2} a_\az(x) D_x^\az, \ \ \ \ a_\az(x) \in C_b^\infty(\R^2), \ \ \  \ \text{such that:}
\end{equation}
\begin{itemize}
\item[\textbf{(a)}] $P$ is symmetric on $L^2(\R^2)$, i.e. $\lr{u,Pv}_{L^2} = \lr{Pu,v}_{L^2}$ when $u, v \in C^\infty_0(\R^2)$;\vspace*{.8mm}
\item[\textbf{(b)}] $P$ is elliptic, i.e. $\sum_{|\az|=2} \Re\big( a_\az(x) \big) \xi^\az \geq c |\xi|^2$ for some $c > 0$  and all $(x,\xi)$; \footnote{Note that from \textbf{(a)}, $a_\az(x) = \ove{a_\az(x)}$ thus the ellipticity condition is equivalent to the more standard one $\sum_{|\az|=2}  a_\az(x) \xi^\az \geq c |\xi|^2$.}
\item[\textbf{(c)}] $P$ has $\Z^2$-periodic coefficients for $|x_2| \geq L$:
\begin{equation}\label{eq:5i}
\exists a_{\az,\pm}(x) \in C^\infty_b(\R^2,\C), \ \text{ $\Z^2$-periodic with } \
a_\az(x) = \systeme{a_{\az,+}(x) & \text{ for }  x_2 \geq L \ \  \\ a_{\az,-}(x) & \text{ for } x_2 \leq -L}.
\end{equation}
\end{itemize}
Under these conditions, $P$ extends uniquely to a selfadjoint operator on $L^2(\R^2)$, with domain $H^2(\R^2)$. In the region $\pm x_2 \geq L$, $P$ is equal to the periodic operator
\begin{equation}
P_\pm \de \sum_{|\az| \leq 2} a_{\az,\pm}(x) D_x^\az \ : \ L^2(\R^2) \rightarrow L^2(\R^2). 
\end{equation}
We observe that the class of operators $P$ of the form \eqref{eq:5s}, satisfying \textbf{(a)} and \textbf{(b)} above, is invariant under linear substitutions. Such change of variables correspond to conjugating $P$ by a unitary transform. Hence the class \eqref{eq:5s} inherently models rational interfaces between materials that are (commensurately) periodic with respect to (non-necessarily squared) $\Z^2$-lattices .

In the rest of the paper, we fix $\lambda_0 \notin \sigma_{L^2(\R^2)}(P_+) \bigcup \sigma_{L^2(\R^2)}(P_-)$. The equation $(P_\pm - \lambda_0)u = 0$ has no bounded solutions. Physically, $\lambda_0$ is an insulating energy: there are no plane waves with energy $\lambda_0$ in systems modeled by $P_\pm$. 

In relation with solid state physics, the operator $P$ models the junction of two perfect insulators along an imperfect interface $|x_2| \leq L$. Even though such materials are insulating in $\pm x_2 \geq L$, they can still support currents at energy $\lambda_0$ along the interface $|x_2| \leq L$. Fix $\epsilon \in (0,1)$ with
\begin{equation}\label{eq:0a}
\sigma_{L^2(\R^2)}(P_\pm) \cap [\lambda_0-2\epsilon,\lambda_0+2\epsilon] =\emptyset
\end{equation}
and two functions $f(x_1) \in C^\infty(\R)$, $g(\lambda) \in C^\infty(\R)$ such that
\begin{equation}\label{eq:2a}
f(x_1) = \systeme{ 0 & \text{ for } x_1 \leq -\ell \\ \ell & \text{ for } x_1 \geq \ell \ \ }, \ \ \ \ g(\lambda) = \systeme{ 0 \ \text{ for } \lambda \geq \lambda_0+\epsilon \\ 1 \ \text{ for } \lambda \leq \lambda_0-\epsilon}.
\end{equation}
Following \cite{KRS:02,EG:02,EGS:05,CG:05}, 
we introduce the conductivity  at energy $\lambda_0$:
\begin{equation}\label{eq:5u}
\II_e(P) \de \Tr_{L^2(\R^2)} \big(i \big[P,f(x_1)\big] \cdot g'(P)\big).
\end{equation}

Before justifying that \eqref{eq:5u} is well-defined, we provide a physical interpretation of $\II_e(P)$ as a conductivity. Using cyclicity of the trace, we observe that for all $t \in \R$,
\begin{equation}
\II_e(P) = \Tr_{L^2(\R^2)} \Big( e^{itP}  i \big[P,f(x_1)\big] e^{-itP} \cdot g'(P)\Big) = \Tr_{L^2(\R^2)} \left(\dd{e^{itP} f(x_1) e^{-itP}}{t} \cdot g'(P)\right).
\end{equation}
From a quantum mechanics point of view:
\begin{itemize}
\item $\p_t e^{itP} f(x_1) e^{-itP}$ measures the quantum flux between $\{f(x_1) = 0\}$ and $\{f(x_1) = 1\}$, per unit time. Indeed, it is the time derivative of the Heisenberg evolution of $f(x_1)$ -- which measures the probability of a particle to sit in $\{f(x_1) = 1\}$.
\item $-g'(P)$ is a density of quantum states with energy near $\lambda_0$. Indeed, $-g'(\lambda)$ is a probability density.
\item Taking the trace corresponds to summing over all quantum states.
\end{itemize}
Therefore, $\II_e(P)$ measures the number of particles moving left to right per unit time and per unit energy (near $\lambda_0$). This is the quantum current along the interface, per unit energy. In analogy with Ohm's law, it represents the quantum conductivity at energy $\lambda_0$. This explains the physical significance of $\II_e(P)$. 
In \S\ref{sec:20}, we will also interpret $\II_e(P)$ as an algebraic number of traveling waves (with plus or minus count depending on the direction of propagation): a spectral flow -- see e.g. \cite[Proposition 3]{ASV:13}. 

The definition of $\II_e(P)$ requires a standard result:

\begin{lem}\label{lem:1c} The operator $[P,f(x_1)] g'(P)$ is trace-class on $L^2(\R^2)$. 
\end{lem}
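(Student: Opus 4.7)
The plan is to exploit the differential-operator structure of $[P, f(x_1)]$, combined with the bulk spectral gap at $\lambda_0$, to establish trace-class. First, the commutator $[P, f(x_1)]$ is a first-order differential operator whose coefficients are supported in the strip $\{|x_1| \leq \ell\}$: indeed, each elementary commutator $[D_x^\az, f(x_1)]$ is a differential operator of order $\leq |\az|-1$ whose coefficients are proportional to $f'(x_1)$ or $f''(x_1)$, hence vanish outside $[-\ell, \ell]$. Thus $[P, f(x_1)] = \chi(x_1)[P, f(x_1)]$ for any $\chi \in C_c^\infty(\R)$ equal to $1$ on $[-\ell,\ell]$, providing localization in the $x_1$-direction.

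Next, I would apply the Helffer--Sj\"ostrand functional calculus to the compactly supported $g'$: pick an almost-analytic extension $G \in C_c^\infty(\C)$ of $g'$ with $\supp(G)$ contained in a neighborhood $\Omega$ of $[\lambda_0-\epsilon, \lambda_0+\epsilon]$, chosen (using \eqref{eq:0a}) so that $\ove{\Omega} \cap \R \subset \rho(P_+) \cap \rho(P_-)$, and $|\bar\partial_\lambda G(\lambda)| = O(|\Im \lambda|^N)$ for every $N$. Then
\[
[P, f(x_1)] g'(P) \ = \ -\frac{1}{\pi} \int_\C \bar\partial_\lambda G(\lambda)\, [P, f(x_1)] (P-\lambda)^{-1}\, d^2\lambda,
\]
reducing matters to a trace-norm bound on $[P,f(x_1)](P-\lambda)^{-1}$ for $\lambda \in \Omega$, polynomial in $|\Im \lambda|^{-1}$.

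To obtain trace-class, I would combine the $x_1$-localization of the commutator with exponential decay of $(P-\lambda)^{-1}$ in $x_2$. Since $P$ agrees with $P_\pm$ for $\pm x_2 \geq L$ and $\dist(\lambda, \sigma(P_\pm)) \geq c > 0$ uniformly on $\Omega$, a Combes--Thomas exponential conjugation $P_\pm \mapsto e^{\tau x_2} P_\pm e^{-\tau x_2}$ (for small $\tau$) preserves ellipticity and shows that the Schwartz kernel of $(P-\lambda)^{-1}$ decays exponentially as $|x_2 - x_2'| \to \infty$ in the bulk, uniformly on $\Omega$. Composing on the left with the first-order operator $[P,f(x_1)]$, whose coefficients live in $|x_1|\leq \ell$, yields an operator whose Schwartz kernel is smooth (by elliptic regularity), $x_1$-compactly supported in the output variable, and exponentially decaying in $x_2$. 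Standard kernel estimates then deliver the trace-class property with trace norm controlled polynomially in $|\Im\lambda|^{-1}$; the gain $|\Im\lambda|^N$ from $\bar\partial G$ absorbs this loss, so the Helffer--Sj\"ostrand integral converges in trace norm.

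The main obstacle is the quantitative Combes--Thomas estimate uniform in $\lambda \to \R$, and the derivation of a clean kernel decay bound for $[P,f(x_1)](P-\lambda)^{-1}$ near the interface $|x_2|\leq L$ where neither $P_+$ nor $P_-$ governs the dynamics. An alternative route, more consistent with the microlocal spirit of the paper, is to interpret $g'(P)$ as a classical pseudodifferential operator of order $-\infty$ (since $g' \in C^\infty_c$ and $P$ is elliptic and bounded below) and apply the pseudodifferential calculus of \S\ref{sec:22} directly to $[P, f(x_1)] g'(P)$; the bulk-gap structure then enters through exponential decay of the Schwartz kernel of $g'(P)$ in $x_2$, and the $x_1$-localization of $[P,f(x_1)]$ supplies the remaining compactness needed for trace-class.
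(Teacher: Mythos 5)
Your first route — Helffer--Sj\"ostrand for $g'$, then bounding $[P,f(x_1)](P-\lambda)^{-1}$ in trace norm — has a genuine gap: that operator is \emph{not} trace class for any fixed $\lambda$. It is a pseudodifferential operator of order $-1$ on $\R^2$, and in two dimensions one needs symbol decay strictly faster than $\lr{\xi}^{-2}$ in the $\xi$-variable to be trace class; the $x_1$-localization from the commutator and any $x_2$-decay you could manufacture do not help with this $\xi$-integrability obstruction. You therefore cannot reduce the problem to a trace-norm bound on the integrand and then integrate. The paper's proof deals with this explicitly by first writing $g'(\lambda) = \rho'(\lambda)(\lambda+i)^{-s}$ with $\rho \in C_c^\infty$, so that after the functional calculus one has an extra factor $(P+i)^{-s} \in \Psi(\lr{\xi}^{-2s})$ supplying the required $\xi$-decay for arbitrary $s$. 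That step is absent from your argument.

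The second gap is your $x_2$-decay claim. Combes--Thomas yields \emph{off-diagonal} decay of the resolvent kernel, $|K_\lambda(x,x')| \lesssim e^{-\tau|x_2-x_2'|}$; it does not by itself give decay of the kernel of $g'(P)$ along the diagonal as $|x_2| = |x_2'| \to \infty$, which is what you actually need for integrability of the diagonal. The mechanism in the paper is algebraic and uses the spectral gap in an essential way: since $\supp g' \cap \sigma(P_\pm) = \emptyset$, one has $g'(P_\pm) = 0$ (equivalently $\rho'(P_\pm) = 0$), so one may write $g'(P) = \sum_\pm \tchi_\pm(x_2)\big(\rho'(P) - \rho'(P_\pm)\big)(P+i)^{-s}$ and then expand the resolvent difference $(P-\lambda)^{-1} - (P_\pm-\lambda)^{-1} = (P-\lambda)^{-1}(P_\pm - P)(P_\pm-\lambda)^{-1}$. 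Because $P_\pm - P$ is supported in $\{x_2 \lessgtr L\}$ and $\tchi_\pm$ is supported in $\{x_2 \gtrless -1\}$, the product of the corresponding order functions is $\lr{x_2}^{-\infty}$, yielding the on-diagonal $x_2$-decay. Your proposal never uses $g'(P_\pm) = 0$; you invoke the gap only to run Combes--Thomas, which is not the right tool here. Your closing paragraph, interpreting $g'(P)$ as an element of $\Psi^{-\infty}$ and appealing to the bulk gap for $x_2$-decay, is exactly the right direction and matches the paper's structure, but it is only a statement of what needs to be proved, not a proof — in particular it does not explain how the spectral gap produces $\lr{x_2}^{-\infty}$ decay, which is precisely the resolvent-identity-plus-vanishing argument just described.
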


While not immediately apparent on \eqref{eq:5u}, $\II_e(P)$ is spectacularly robust.  The first property ensures independence on $f$:

\begin{lem}\label{lem:2a} $\II_e(P)$ is independent of $f$ satisfying \eqref{eq:2a}.
\end{lem}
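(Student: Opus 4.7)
The plan is to reduce the statement to showing $\Tr\bigl([P, \phi(x_1)]\, g'(P)\bigr) = 0$ for $\phi := f_1 - f_2 \in C_c^\infty(\R)$. The trace-class property of this operator is already granted by Lemma \ref{lem:1c} applied to $f_1$ and $f_2$ separately (their difference inherits it). Thus the real content is the vanishing of the trace.

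The key manipulation is that $g'(P)$ commutes with $P$ by functional calculus, so
\begin{equation*}
[P, \phi(x_1)]\, g'(P) \;=\; [P,\, \phi(x_1) g'(P)].
\end{equation*}
I will argue the vanishing through cyclicity of trace: $\Tr(PB) = \Tr(BP)$ for $B := \phi(x_1) g'(P)$, which holds provided that $B$, $PB$, and $BP$ are all trace class. Verifying these three trace-class properties is the technical core of the argument.

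For $\phi(x_1) g'(P)$, the essential input is that $\supp g' \subset [\lambda_0 - \epsilon, \lambda_0 + \epsilon]$ lies in the resolvent set of both $P_\pm$ by \eqref{eq:0a}. Via the Helffer--Sj\"ostrand formula combined with Combes--Thomas resolvent estimates on $(P - \lambda)^{-1}$ in the bulk regions $\pm x_2 \geq L$ (where $P = P_\pm$), the Schwartz kernel of $g'(P)$ decays faster than any polynomial in $|x_2|$ as $|x_2| \to \infty$. Multiplied by $\phi(x_1)$, compactly supported in $x_1$, this produces a rapidly decaying kernel concentrated in a compact set, hence trace class. For $\phi(x_1) g'(P) P$, write this as $\phi(x_1) h(P)$ with $h(\lambda) = \lambda g'(\lambda)$ smooth and supported in the spectral gap; the same argument applies. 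Finally, $P \phi(x_1) g'(P) = [P, \phi(x_1)]\, g'(P) + \phi(x_1) g'(P)\, P$ is trace class by Lemma \ref{lem:1c} and the preceding computation. Cyclicity of trace (justified by approximating $P$ with the bounded operators $P \chi_n(P)$ for a cutoff $\chi_n \to 1$ and passing to the limit in trace norm) then yields $\Tr[P, \phi(x_1) g'(P)] = 0$.

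The main obstacle is the $x_2$-decay estimate on the kernel of $g'(P)$: this is the first essential use of the bulk spectral gap \eqref{eq:0a}, and it is what transforms the local $x_1$-support of $\phi$ into global compactness in both variables. The Combes--Thomas piece is technical but standard; everything else is a careful handling of traces and commutators accounting for the unboundedness of $P$.
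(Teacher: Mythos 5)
Your proof is correct and follows essentially the same route as the paper's: reduce to $\phi = f_1 - f_2 \in C^\infty_0(\R)$, use the commutation of $P$ with $g'(P)$ to write the quantity as $\Tr(P\phi\,g'(P)) - \Tr(\phi\,g'(P)\,P)$, verify trace-class, and invoke cyclicity (your regularization of the unbounded factor $P$ is a legitimate way to justify this step). The only stylistic difference is in the trace-class verification: the paper simply re-uses the symbolic estimate $g'(P) \in \Psi\big(\lr{x_2}^{-\infty}\lr{\xi}^{-\infty}\big)$ already established in the proof of Lemma~\ref{lem:1c}, whereas you re-derive the needed decay via Combes--Thomas resolvent estimates — two expressions of the same input, the bulk spectral gap \eqref{eq:0a}.
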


The second one is independence on the nature of the interface:

\begin{lem}\label{lem:1f} Let $P_1$ and $P_2$ satisfy the assumptions \textbf{(a)}, \textbf{(b)} and \textbf{(c)} above. If $P_1-P_2$ has coefficients supported in a strip $\{ |x_2| \leq L'\}$, then $\II_e(P_1) = \II_e(P_2)$.
\end{lem}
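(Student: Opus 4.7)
The plan is a homotopy argument. Set $V \de P_1 - P_2$; by hypothesis, $V$ is a differential operator of order at most $2$ whose coefficients are supported in the strip $\{|x_2|\leq L'\}$. Consider the affine path $P_s \de P_2 + sV$, $s\in[0,1]$. Every $P_s$ satisfies \textbf{(a)}, \textbf{(b)}, \textbf{(c)} with bulk operators $P_+$ and $P_-$ unchanged, hence $\lambda_0$ remains in the common bulk gap and Lemma \ref{lem:1c} makes $\II_e(P_s)$ well-defined. It thus suffices to show $\p_s \II_e(P_s) \equiv 0$.

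Writing $F \de f(x_1)$ and differentiating,
\begin{equation*}
\p_s \II_e(P_s) \ = \ \Tr\big(i[V,F]\,g'(P_s)\big) \ + \ \Tr\big(i[P_s,F]\,\p_s g'(P_s)\big).
\end{equation*}
I would handle the second trace via a Helffer--Sj\"ostrand representation
\begin{equation*}
g'(P_s) \ = \ -\tfrac{1}{\pi}\int_\C \bar\p\, \widetilde{g'}(z)\,(P_s-z)^{-1}\, d^2\lambda(z),
\end{equation*}
with $\widetilde{g'}$ an almost analytic extension of $g'$ of compact support. Combining $\p_s (P_s-z)^{-1} = -(P_s-z)^{-1}V(P_s-z)^{-1}$ with the resolvent commutator identity
\begin{equation*}
(P_s-z)^{-1}[P_s,F](P_s-z)^{-1} \ = \ -\big[(P_s-z)^{-1},F\big]
\end{equation*}
and cyclicity of the trace to slide $V$ past $F$, the second term collapses to $-i\Tr([V,F]\,g'(P_s))$, cancelling the first exactly. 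Hence $\p_s \II_e(P_s) \equiv 0$ and $\II_e(P_1)=\II_e(P_2)$.

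The principal obstacle is justifying trace-class membership and the repeated applications of cyclicity. Although $V$ produces unbounded operators in $x_1$, the commutator $[V,F]$ has coefficients supported in the bounded square $\{|x_1|\leq \ell\}\cap\{|x_2|\leq L'\}$, so $[V,F]\,g'(P_s)$ is trace-class by a direct adaptation of Lemma \ref{lem:1c}. Inside the Helffer--Sj\"ostrand integral, only the commutator $[(P_s-z)^{-1},F]$ is paired with $V$, and its kernel is concentrated in $|x_1|\leq \ell$ (where $[F,P_s]$ lives); combined with Combes--Thomas-type off-diagonal decay of the resolvent and the fact that $\bar\p\widetilde{g'}$ vanishes to infinite order on the real axis (which absorbs the $1/|\Im z|$ blowup of $\|(P_s-z)^{-1}\|$), this yields the Schatten bounds and integrable trace-norm estimates needed to commute integration, trace, and cyclicity throughout.
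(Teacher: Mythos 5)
Your homotopy argument is a legitimate alternative organization to the paper's two-step route (Lemma~\ref{lem:1d} for compactly supported differences, then an approximation and interpolation argument), and the algebraic cancellation is correct: under the stated cyclicity manipulations, $\p_s\II_e(P_s)=0$ formally. Moreover the observation that $[V,F]$ has compactly supported coefficients, whereas $V$ itself is only strip-supported, is exactly the structural fact that makes the statement true. In spirit this is close to the paper's proof of Lemma~\ref{lem:1d}, which also expresses the variation as traces of commutators that vanish.

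However, the analytic justification has a genuine gap: the trace-class and cyclicity claims are not established. In the continuum (unlike the discrete setting where the Brillouin zone is compact), $\xi$-decay of the symbol must be produced explicitly. With the naive Helffer--Sj\"ostrand representation $g'(P_s)=\pi^{-1}\int\bar\p\widetilde{g'}\,(P_s-z)^{-1}d^2\lambda$, the pointwise-in-$z$ integrand after your manipulations -- e.g.\ $[(P_s-z)^{-1},F]V$ or $(P_s-z)^{-1}FV$ -- lies only in $\Psi(\lr{x_2}^{-\infty})$ or $\Psi(\lr{x}^{-\infty})$ with no $\lr{\xi}$-decay, hence is not trace-class; the identity $\Tr(AB)=\Tr(BA)$ cannot be invoked. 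Your appeal to Combes--Thomas bounds and to $\bar\p\widetilde{g'}=O(|\Im z|^\infty)$ addresses spatial off-diagonal decay and the $|\Im z|^{-1}$ blow-up, but neither supplies the missing frequency decay: the diagonal singularity of the resolvent kernel is unaffected. The paper resolves exactly this by setting $\rho'(\lambda)=g'(\lambda)(\lambda+i)^s$ on $\sigma(P)$ and writing $g'(P)=\rho'(P)(P+i)^{-s}$, so that the factor $(P+i)^{-s}\in\Psi(\lr{\xi}^{-2s})$ makes every $z$-fiber trace-class; it then expresses the difference as a finite sum of commutators $[D_k,E_k]$ with \emph{both} $D_kE_k$ and $E_kD_k$ verified to be trace-class before taking traces (proof of Lemma~\ref{lem:1d}, Steps 2--4). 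A complete version of your homotopy proof would have to incorporate the same $(P+i)^{-s}$ device and check each cyclic rearrangement at the level of trace-class pairs, after which the argument should close; the paper avoids some of this bookkeeping in the strip-supported case by truncating $V$ to $\chi(\epsilon x)V$, applying Lemma~\ref{lem:1d}, and sending $\epsilon\to0$ with an interpolation estimate.
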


According to Lemma \ref{lem:1f}, $\II_e(P)$ depends only on $P_+$ and $P_-$. We state a last independence property that is perhaps subtler. We will use at the end of the proof, \S\ref{sec:4}. Let $\lambda_1, \lambda_2$ with  $\lambda_1 +2\epsilon \leq \lambda_0 \leq \lambda_2-2\epsilon$ and $\psi \in C^\infty(\R)$ a two-level rearrangement:
\begin{equation}\label{eq:3k}
\psi \ \ \text{nondecreasing} \ \ \text{ and } \ \ 
\psi(\lambda) = \systeme{ \lambda_1 & \ \text{for } \  \lambda \leq \lambda_0-2\epsilon \\
\lambda & \ \text{for } \ |\lambda -\lambda_0| \leq \epsilon \\
\lambda_2& \ \text{for } \ \lambda \geq \lambda_0 + 2\epsilon}.
\end{equation}
See Figure \ref{fig:5}.

\begin{figure}
\floatbox[{\capbeside\thisfloatsetup{capbesideposition={right,center},capbesidewidth=3in}}]{figure}[\FBwidth]
{\hspace{1cm}\caption{Pictorial representation of $\psi(\lambda)$. We note that $\psi(P_+)$ and $\psi(P_-)$ have no spectrum outside $\{\lambda_1,\lambda_2\}$.}\label{fig:5}}
{\begin{tikzpicture}

\node at (0,0) {\includegraphics{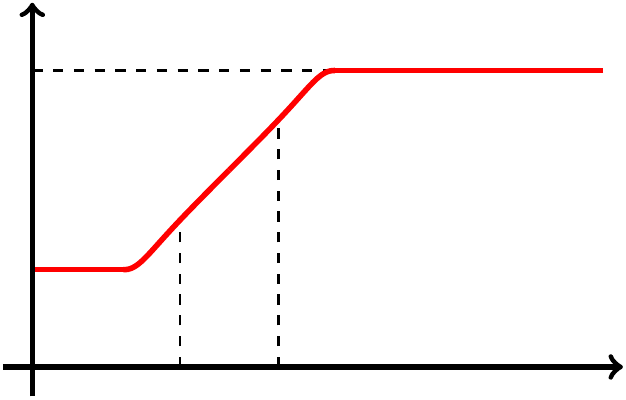}}; 

\node at (-1.7,-2) {$\lambda_0-\epsilon$}; 
\node at (0,-2) {$\lambda_0+\epsilon$}; 
\node at (2.9,-1.35) {$\lambda$};

\node at (-3.15,-.7) {$\lambda_1$}; 
\node at (-3.15,1.3) {$\lambda_2$}; 

\node[red] at (2.2,1.65) {$\psi(\lambda)$}; 

   \end{tikzpicture}}
   \vspace*{-.3cm}
\end{figure}

\begin{lem}\label{lem:1e} If $\psi$ satisfies \eqref{eq:3k} then $\big[ \psi(P), f(x_1) \big] g'\circ \psi(P)$ is trace-class and 
\begin{equation}
\II_e(P) = \Tr_{L^2(\R^2)} \big( \big[ \psi(P), f(x_1) \big] g'\circ \psi(P) \big) = \II_e\big(\psi(P)\big).
\end{equation}
\end{lem}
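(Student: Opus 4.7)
The plan is to use the elementary identities $g'\circ\psi = g'$ and $\chi\cdot g' \equiv 0$, where $\chi \de \psi - \mathrm{id}$, together with cyclicity of the trace.

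First I would verify $g'(\psi(\lambda)) = g'(\lambda)$ for every $\lambda\in\R$: on $[\lambda_0-\epsilon,\lambda_0+\epsilon]$ the two sides coincide because $\psi = \mathrm{id}$ there; outside this interval $g'(\lambda) = 0$, while monotonicity of $\psi$ combined with $\psi(\lambda_0\pm\epsilon) = \lambda_0\pm\epsilon$ yields $\psi(\lambda)\notin(\lambda_0-\epsilon,\lambda_0+\epsilon)$, so $g'(\psi(\lambda)) = 0$ as well. The spectral theorem then gives $g'(\psi(P)) = g'(P)$, and the lemma reduces to proving that $[\psi(P), f(x_1)]\,g'(P)$ is trace class with the same trace as $[P, f(x_1)]\,g'(P)$.

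Next I would split $\psi(P) = P + \chi(P)$, so that
\[
[\psi(P), f(x_1)]\,g'(P) = [P, f(x_1)]\,g'(P) + [\chi(P), f(x_1)]\,g'(P).
\]
The first summand is trace class by Lemma~\ref{lem:1c}. Since $\chi$ vanishes on $[\lambda_0-\epsilon,\lambda_0+\epsilon]\supset\supp(g')$, functional calculus gives $\chi(P)\,g'(P) = 0$; the second summand therefore simplifies to $\chi(P)\,f(x_1)\,g'(P) = \chi(P)\,[f(x_1), g'(P)]$. A variant of Lemma~\ref{lem:1c}, applied with the bounded function $g'$ in place of $g$, shows that $[f(x_1), g'(P)]$ is trace class on $L^2(\R^2)$; since $\chi(P)$ is bounded, the second summand is trace class as well. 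Cyclicity of the trace, applied to the trace-class product $\chi(P)\,f(x_1)\,g'(P)$ viewed as $(\chi(P)f(x_1))\cdot g'(P)$, then gives
\[
\Tr\bigl(\chi(P)\,f(x_1)\,g'(P)\bigr) = \Tr\bigl(g'(P)\,\chi(P)\,f(x_1)\bigr) = 0,
\]
the last equality because $g'(P)$ and $\chi(P)$ commute and $g'(P)\chi(P) = 0$.

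Combining the above gives $\Tr\bigl([\psi(P), f(x_1)]\,g'(\psi(P))\bigr) = \Tr\bigl([P, f(x_1)]\,g'(P)\bigr)$, which recovers $\II_e(P)$ up to the factor of $i$ built into the definition; applied with $\psi(P)$ replacing $P$, the same identity shows the common value also equals $\II_e(\psi(P))$. The main technical ingredient is the trace-class property of $[f(x_1), g'(P)]$, which I would establish by a Helffer--Sj\"ostrand representation of $g'(P)$ in terms of the resolvents $(P-\lambda)^{-1}$, combined with the identity $[(P-\lambda)^{-1}, f(x_1)] = -(P-\lambda)^{-1}[P, f(x_1)](P-\lambda)^{-1}$, exploiting that $[P, f(x_1)]$ has coefficients compactly supported in $x_1$ — exactly the ingredients behind Lemma~\ref{lem:1c}.
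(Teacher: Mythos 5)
Your reduction $g'\circ\psi=g'$ and the splitting $\psi(P)=P+\chi(P)$ with $\chi=\psi-\mathrm{id}$ is a legitimate and more elementary route than the paper's, but it contains a genuine error: the assertion that $\chi(P)$ is bounded is false. Since $\psi(\lambda)=\lambda_2$ for $\lambda\geq\lambda_0+2\epsilon$ while $\sigma_{L^2}(P)$ is unbounded above, one has $\chi(\lambda)=\lambda_2-\lambda\to-\infty$ along $\sigma_{L^2}(P)$; so $\chi(P)$ is an unbounded (selfadjoint) operator. This is precisely why the paper instead writes $\psi(P)=\lambda_2+\vp(P)$ with $\vp=\psi-\lambda_2$: since $\sigma_{L^2}(P)$ is bounded below, $\vp$ agrees on $\sigma_{L^2}(P)$ with a $C^\infty_0$ function, so $\vp(P)\in\Psi(1)$ is bounded and the Helffer--Sj\"ostrand calculus applies cleanly. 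Your ``$\chi(P)$ is bounded'' shortcut substitutes for exactly this observation and therefore cannot stand as written.

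The gap infects two places. First, the trace-class claim for $\chi(P)[f(x_1),g'(P)]$: the conclusion is still true, but the correct reason is not boundedness; rather $[f(x_1),g'(P)]\in\Psi(\lr{x}^{-\infty}\lr{\xi}^{-\infty})$ (your ``variant of Lemma~\ref{lem:1c}'' is fine) absorbs the $\lr{\xi}^2$ growth of $\chi(P)=\psi(P)-P\in\Psi(\lr{\xi}^2)$, or equivalently factor $\chi(P)=\big(\chi(P)\lr{P}^{-1}\big)\lr{P}$ with the first factor bounded. Second, the cyclicity $\Tr\big(\chi(P)f(x_1)g'(P)\big)=\Tr\big(g'(P)\chi(P)f(x_1)\big)$ is not automatic once $\chi(P)$ is unbounded; it requires a regularization (e.g.\ replacing $\chi(P)$ by $\chi(P)\1_{|\chi(P)|\leq n}$, using that $\chi(P)\1_{|\chi(P)|\leq n}\,g'(P)=0$ and that $\chi(P)\1_{|\chi(P)|>n}\lr{P}^{-2}\to 0$ in operator norm, then passing to the limit in trace norm). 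Once these two points are repaired, your argument does close and gives a proof genuinely different from the paper's, which instead inserts the Helffer--Sj\"ostrand representation of $\vp(P)$, integrates by parts in $\lambda$, and uses $\vp'(P)=\psi'(P)=\Id$ on $\supp g'$ to reassemble $\JJ_e(P)$.
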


From Lemma \ref{lem:1e}, $\II_e(P) = \II_e\big(\psi(P)\big)$. Because of Lemma \ref{lem:1c}, it is reasonable to expect that $\II_e\big(\psi(P)\big)$ depends only on $\psi(P_\pm)$. From \eqref{eq:0a} and \eqref{eq:3k},
\begin{equation}
\psi(P_\pm) = \lambda_1 \cdot \Pi_\pm + \lambda_2 \cdot (\Id-\Pi_\pm), \ \ \ \ \Pi_\pm \de \1_{(-\infty,\lambda_0]}(P_\pm). 
\end{equation}
Since $\II_e(P)$ does not depend on $\lambda_1, \lambda_2$, we anticipate that $\II_e(P)$ depends only on~$\Pi_\pm$.

Lemma \ref{lem:1c}-\ref{lem:1e} are morally known; see e.g. \cite{KRS:02,EG:02} for Lemma \ref{lem:1c}, \cite[Theorem 1]{CG:05} for Lemma \ref{lem:1f} and \cite[Lemma A.4]{EG:02} or \cite[Lemma 4.7]{Ba:19a} for Lemma \ref{lem:1e}. We give our own proofs in \S\ref{sec:23}. They rely on pseudodifferential calculus (reviewed in \S\ref{sec:22}). They provide a good introduction to the semiclassical techniques of \S\ref{sec:3}.

\subsection{Dynamics, spectral flow and edge index}\label{sec:20} We give here an interpretation of $\II_e(P)$ as the signed number of independent elementary waves propagating along the interface $|x_2| \leq L$.

Thanks to Lemma \ref{lem:1f}, $\II_e(P)$ depends only on $P_+$ and $P_-$. After a perturbation of $P$ in the strip $\{ |x_2| \leq L\}$, we can assume here that $P$ is periodic w.r.t. $\Z e_1$. In this case, for each $\zeta \in [0,2\pi]$, $P$ acts on the space
\begin{equation}
\LL^2_\zeta \de \left\{ u \in L^2_\loc(\R^2,\C) : \ u(x+e_1) = e^{i\zeta} u(x), \ \int_{[0,1] \times \R} \big|u(x)\big|^2 dx < \infty \right\}.
\end{equation}
We denote the resulting operator by $P_\zeta$ -- the Floquet--Bloch decomposition of $P$ along $\Z e_1$. The essential spectrum of $P_\zeta$ comes from large values of $|x_2|$: we have
\begin{equation}\label{eq:0i}
\sigma_{\LL^2_\zeta,\ess}(P_\zeta) \ = \ \sigma_{\LL^2_\zeta,\ess}(P_{+,\zeta}) \ \textstyle{ \bigcup } \  \sigma_{\LL^2_\zeta,\ess}(P_{-,\zeta}) \ \subset \ \sigma_{L^2(\R^2)}(P_+) \ \textstyle{ \bigcup } \ \sigma_{L^2(\R^2)}(P_-),
\end{equation}
where $P_{\pm,\zeta}$ denote $P_\pm$ acting on $\LL^2_\zeta$. In particular, \eqref{eq:0i} shows that $P_\zeta$ has an essential spectral gap containing $\lambda_0$. 

The spectral flow of $P_\zeta$ is the algebraic number of $\LL^2_\zeta$-eigenvalues that traverse this gap as $\zeta$ sweeps $[0,2\pi]$; see \cite{W:16} for a smooth introduction and Figure \ref{fig:7} for a pictorial representation.  From \cite[Proposition 3]{ASV:13},  $2\pi \cdot \II_e(P)$ coincide with the  spectral flow of $\zeta \mapsto P_\zeta$, when $P$ is $\Z e_1$-invariant.

\begin{figure}
\floatbox[{\capbeside\thisfloatsetup{capbesideposition={right,center},capbesidewidth=3in}}]{figure}[\FBwidth]
{\hspace*{-8mm}\caption{Essential  (gray) and discrete (red) spectra of $P_\zeta$ as functions of $\zeta$. The spectral flow is the intersection number of the eigenvalue curves with the energy level $\lambda_0$. Here it equals $1$.}\label{fig:7}}
{\begin{tikzpicture}

\node at (0,0) {\includegraphics{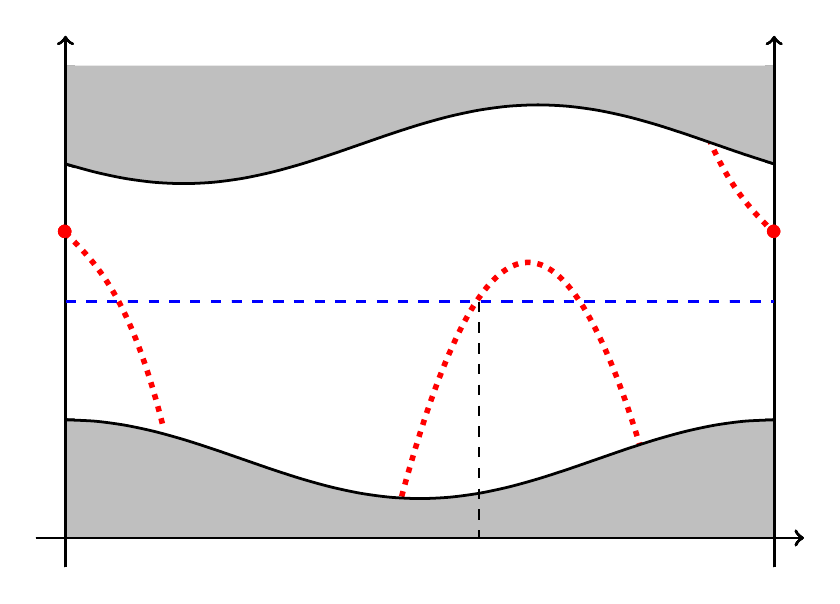}};

  \draw [decorate,decoration={brace},thick] (-3.8,3*.8-1) -- (-3.8,3*.8);
 \draw [decorate,decoration={brace},thick] (-3.8,-2.3) -- (-3.8,-3*.8+1.2);
 \node at (3.4,3*.8+.3) {$\lambda$};
   \node at (-4.5,2.2) {ess.};   
   \node at (-4.5,1.8) {spec.};   

   \node at (-4.5,-1.6) {ess.};   
   \node at (-4.5,-2) {spec.};

    \node[red] at (-4.3,.7) {disc.}; 
    \node[red] at (-4.3,.3) {spec.};

     \node at (4,-3*.8+.3) {$\zeta$};
  \node at (-3.4,-2.7) {$0$};   
\node at (3.2,-2.7) {$2\pi$};   
 
        \node[blue] at (4,.3-.3) {$\lambda_0$};
 \node at (.6,-2.7) {$\zeta_0$};
 
        \node[blue] at (-2.5,-.3) {$+1$};
 \node[blue] at (0,-.3) {$-1$};
 
 \node[blue] at (2.2,-.3) {$+1$};

   \end{tikzpicture}}
   \vspace*{-.3cm}
\end{figure}

From a dynamical point of view, a curve of simple eigenvalues $\lambda(\zeta)$ of $P_\zeta$ with $\lambda(\zeta_0) = \lambda_0$ generates a wave propagating parallel to $\R e_1$, with group velocity $\p_\zeta\lambda(\zeta_0)$. Hence, the spectral flow and $\II_e(P)$ count elementary waves at energy $\lambda_0$ that travel along the interface $|x_2| \leq L$, signed according to the direction of propagation, $\sgn \big( \p_\zeta \lambda(\zeta_0) \big)$.
%Indeed, for $\zeta$ sufficiently close to $\zeta_0$, let $u_\zeta(x) \in \LL^2_{\zeta_0}$ be a $\lambda(\zeta)$-eigenvector of $P$. Let $f_0 \in C^\infty_0(\R)$ with sufficiently small support. We form a wavepacket \begin{equation} \psi_0(x) = \int_\R f_0\left( \zeta-\zeta_0 \right) u_\zeta(x) d\zeta. \end{equation} The Schr\"odinger evolution of $\psi_0(x)$ is $e^{-itP} \psi_0(x)$. Perturbation theory -- see e.g. \cite[\S7]{Ka:95} -- implies that $\lambda(\zeta) \simeq \lambda_0+(\zeta-\zeta_0) \lambda'(\zeta_0)$ and $u_\zeta(x) \simeq e^{i(\zeta-\zeta_0) x_1}u_{\zeta_0}(x)$. Hence  \begin{equations} e^{-itP} \psi_0(x) =   \int_\R f_0\left( \zeta-\zeta_0 \right) e^{-itP} u_\zeta(x) dx = \int_\R f_0\left(  \zeta-\zeta_0  \right)  e^{-it\lambda(\zeta)} u_\zeta(x) d\zeta \\ \simeq   \int_\R f_0\left(  \zeta-\zeta_0  \right) e^{-i(\zeta-\zeta_0) (t \cdot\p_\zeta\lambda(\zeta_0)-x_1)}  d\zeta \cdot e^{-it\lambda_0} u_{\zeta_0}(x) \\  = \hat{f_0} \big( t\p_\zeta \lambda(\zeta_0) + x_1 \big) \cdot e^{-it\lambda_0} u_{\zeta_0}(x). \end{equations} This is a wave localized near $x_2=0$ and $x_1 = -\p_\zeta \lambda(\zeta_0) t$.

\subsection{Classical pseudodifferential operators}\label{sec:22}  We review here the classical pseudodifferential calculus. The results below are exposed in Dimassi--Sj\"ostrand \cite[\S7-8]{DS:99} and Zworski \cite[\S4 and \S14]{Zw:12} (set $h=1$). 
For more advanced results, we refer to H\"ormander \cite[\S18-20]{Ho:85}.

 Given $a(x,\xi) \in C^\infty_0\big(\R^2 \times \R^2\big)$, the Weyl quantization of $a$ is defined as
\begin{equation}\label{eq:5n}
\big(\Op(a) u\big)(x) \de  \int_{\R^2 \times \R^2} e^{i\xi(x-x')} \cdot  a \left( \dfrac{x+x'}{2},\xi \right) u(x') \ \dfrac{dx'd\xi}{(2\pi)^2}, \ \ \ \ u \in C^\infty_0(\R^2).
\end{equation}
Operators of the form \eqref{eq:5n} are called pseudodifferential (Pdos); they generalize differential operators. We review here key facts on pseudodifferential calculus, with an emphasis on order functions; composition; resolvents; and trace-class properties. 

\subsubsection{Order functions} See \cite[\S4.4]{Zw:12} and \cite[\S7]{DS:99}. Let $a \in C^\infty(\R^2)$. Conditions so that \eqref{eq:5n} still defines a bounded operator on the Schwartz class  $\SSS(\R^2,\C)$ are typically encoded in order functions, i.e. functions $m(x,\xi) \in C^0(\R^2 \times \R^2)$ with
\begin{equation}
w, \ w' \in \R^2 \times \R^2 \ \ \Rightarrow  \ \ m(w) \leq C \lr{w-w'}^N m(w').
\end{equation}
Specifically, \eqref{eq:5n} defines $\Op(a)$ as a bounded operator on $\SSS(\R^2,\C)$ if for some order function $m$, for all $\az \in \N^4$ there exists $C_\az > 0$ with
\begin{equation}\label{eq:5o}
(x,\xi) \in \R^2 \times \R^2 \ \ \Rightarrow \ \
\left| \p^\az a(x,\xi) \right| \leq C_\az \cdot m(x,\xi).
\end{equation}
Symbols $a$ satisfying \eqref{eq:5o} form the class $S(m)$, naturally equipped with a Frechet space structure. We set $\Psi(m) = \Op\big(S(m)\big)$. Given an order function $m \geq 1$, we say that $a \in S(m^{-\infty})$ if for every $s \in \N$, $a \in S(m^{-s})$.

Standard examples of order functions include $1$, $\lr{x}^s$ and $\lr{\xi}^s$ for any $s \in \R$; lesser known examples are
\begin{equation}\label{eq:2f}
m_{j,\pm}(x,\xi) \de \systeme{ 1 & \text{ for } \pm x_j \geq 0 \\ \lr{x_j}^{-1}  & \text{ for } \pm x_j \leq 0 }.
\end{equation}

\subsubsection{Composition of Pdos}\label{sec:222} See \cite[\S4.4-4.5]{Zw:12} and \cite[\S7]{DS:99}. If $m_1$ and $m_2$ are order functions, then so is $m_1 m_2$. The composition of two Pdos is a Pdo:
\begin{equation}
\Op(a) \in \Psi(m_1), \ \  \Op(b)\in \Psi(m_2) \ \ \Rightarrow \ \ \Op(a)\Op(b) \in \Psi(m_1m_2).
\end{equation}
Moreover, the symbol of $\Op(a) \Op(b)$ in $S(m_1m_2)$ depends continuously on $(a,b) \in S(m_1) \times S(m_2)$.

\subsubsection{Resolvents}\label{sec:223} See \cite[\S8]{DS:99}. We now turn to resolvents. Let $P$ given by \eqref{eq:5s} be elliptic and selfadjoint. We note that $P \in \Psi(\lr{\xi}^2)$. For any $\lambda$ with $\Im \lambda > 0$, the operator $P-\lambda$ is an isomorphism from $H^2(\R^2)$ to $L^2(\R^2)$. A classical result of Beals \cite{Be:77} implies that $(P-\lambda)^{-1} \in \Psi\big( \lr{\xi}^{-2}\big)$:
\begin{equation}
\forall \lambda \in \C^+, \ \ 
\exists r(\cdot;\lambda) \in S\big(\lr{\xi}^{-2}\big), \ \ \ \ 
(P-\lambda)^{-1} = \Op\big(r(\cdot;\lambda)\big).
\end{equation}

In the proofs below, we will need uniform estimates on $r(\cdot;\lambda)$ in $S(1)$: for every $R > 0$, $\az \in \N^4$, there exists $c_{\az,R} > 0$ such that
\begin{equation}\label{eq:5r}
|\lambda| \leq R, \ \ \Im \lambda > 0 \ \ \Rightarrow \ \ \sup_{(x,\xi) \in \R^2}\left| \p^\az r(x,\xi;\lambda) \right| \leq c_{\az,R} \cdot |\Im \lambda|^{-6-|\az|}. \ \ \footnote{The power $6$ is specific to the dimension $n=2$; in general it is $2n+2$.}
\end{equation}
This shows that the constant $C_\az$ for $r(\cdot;\lambda)$ and $m=1$ in \eqref{eq:5o} blow up at worst polynomially in $|\Im \lambda|^{-1}$ when $|\lambda|$ remains bounded.

\subsubsection{Trace-class properties}\label{sec:225} See \cite[\S8]{DS:99}. Assume that $m \in L^1$. Then for any $a \in S(m)$, $\Op(a)$ extends to a trace class operator on $L^2(\R^2)$. Moreover there exists $C > 0$ independent of $a$ such that 
\begin{equation}
\big\| \Op(a) \big\|_\Tr \leq C |m|_{L^1} \cdot \sup_{|\az| \leq 5} C_\az,\footnote{The number $5$ is specific to $n=2$; in general it is $2n+1$.}
\end{equation}
where the constants $C_\az$ are those of \eqref{eq:5o}.

\subsubsection{Functional calculus}\label{sec:224} See \cite[\S3.1 and \S14.3]{Zw:12} and \cite[\S8]{DS:99}. An almost analytic extension of $\rho(\lambda) \in C^\infty_0(\R)$ is a function $\trho(\lambda) \in C^\infty_0(\C^+)$ such that
\begin{equation}
\trho\big|_\R = \rho; \ \ \ \text{and}  \ \ \ \p_\olambda \trho(\lambda) = O\left(|\Im \lambda|^\infty\right) \ \ \text{as} \ \  \Im \lambda  \rightarrow 0.
\end{equation}
Almost analytic extensions always exist. 
If $\trho$ is an almost analytic extension of $\rho$ then $\p_\lambda \trho$ is an almost analytic extension of $\rho'$. Indeed, $(\p_\olambda \trho)\big|_\R = 0$; $\p_\lambda + \p_\olambda$ is tangent to $\R$ and $(\p_\lambda + \p_\olambda) \big|_\R$ is the standard derivative; and $\trho|_\R = \rho$. Thus we have
\begin{equation}
(\p_\lambda \trho) \big|_\R = \big((\p_\lambda + \p_\olambda) \trho\big) \big|_\R = (\p_\lambda + \p_\olambda) \big|_\R  \big(\trho\big|_\R\big) = \big(\trho\big|_\R\big)' =   \rho'.
\end{equation}
Moreover, since $\p_\lambda (\Im \lambda)^s = (2i)^{-1}s \cdot (\Im \lambda)^{s-1}$, we have $\p_\olambda \p_\lambda \trho = O(|\Im \lambda|^\infty)$. This proves that $\p_\lambda \trho$ is an almost analytic extension of $\rho'$.

To emphasize that almost analytic extensions are not analytic, we use the notation $\trho(\lambda) = \trho(\lambda,\olambda)$ in the rest of the paper. A central application of almost analytic extensions is the Helffer--Sj\"ostrand formula. It asserts that for every $z \in \C$,
\begin{equation}\label{eq:2d}
\rho(z) =  \int_{\C^+} \dd{\trho(\lambda,\olambda)}{\olambda} \cdot (z-\lambda)^{-1} \cdot \dfrac{d^2\lambda}{\pi}.
\end{equation}

The identity \eqref{eq:2d} allows for a functional calculus developed in terms of resolvents. If $T$ is a (possibly unbounded) selfadjoint operator then $\big\| (T-\lambda)^{-1} \big\| \leq |\Im \lambda|^{-1}$. In particular we can express $\rho(T)$ as an absolutely convergent integral:
\begin{equation}\label{eq:2e}
\rho(T) =  \int_{\C^+} \dd{\trho(\lambda,\olambda)}{\olambda} \cdot (T-\lambda)^{-1} \cdot \dfrac{d^2\lambda}{\pi}.
\end{equation}
While the functional calculus based on \eqref{eq:2e} goes back to Dyn'kin \cite{Dy:75}, its popular use in the semiclassical literature seems to start with \cite{HS:89}; see \cite{HS:90,SZ:91,Di:93} for subsequent developments.

\subsection{Proofs of Lemma \ref{lem:1c}-\ref{lem:1e}}\label{sec:23} For convenience,  starting now we will use:
\begin{equation}
\JJ_e(P) \de -i \II_e(P) = \Tr_{L^2(\R^2)} \Big( \big[P,f(x_1)\big] g'(P) \Big).
\end{equation} 

\begin{proof}[Proof of Lemma \ref{lem:1c}] 
\textbf{1.} We need to show that $[P,f(x_1)] g'(P)$ is trace-class on $L^2(\R^2)$. Our strategy is to show that $[P,f(x_1)] g'(P)$ is a Pdo whose symbol decays sufficiently. 
We first focus on the term
\begin{equation}
\big[P,f(x_1)\big] = 
\big(1-f(x_1)\big) P f(x_1) - f(x_1) P  \big(1-f(x_1)\big).
\end{equation}
We observe that $f(x_1) \in \Psi(m_{1,+}^{\infty})$ and $1-f(x_1) \in \Psi(m_{1,-}^\infty)$, where $m_{1,\pm}$ are the order functions defined in \eqref{eq:2f}. The Weyl symbol of $P$ belongs to $S\big(\lr{\xi}^2\big)$. Since $m_{1,+}  m_{1,-}  = \lr{x_1}^{-1}$, we deduce from the composition theorem (\S\ref{sec:222}):
\begin{equation}\label{eq:2g}
\big[P,f(x_1)\big] \in \Psi\left(\lr{\xi}^2 \lr{x_1}^{-\infty}\right).
\end{equation}

\textbf{2.} Fix $s\in \N$. We focus on $g'(P)$. Let $\rho(\lambda) \in C_0^\infty(\R,\C)$ such that
\begin{equation}\label{eq:5p}
\lambda \in \sigma_{L^2(\R^2)}(P) \ \ \Rightarrow \ \ 
\rho'(\lambda) = g'(\lambda) (\lambda+i)^s.
\end{equation}
Note that $\rho$ exists: \eqref{eq:5p} specifies $\rho'$ on $\sigma_{L^2(\R^2)}(P)$, which is bounded below; and it suffices to arrange so that $\rho'$ integrates  to $0$ on $\R$. 
Let $\tchi_\pm(x_2) \in C^\infty(\R,[0,1])$ with
\begin{equation}\label{eq:4a}
\tchi_+(x_2) = \systeme{ 0 & \text{ for } x_2 \leq -1 \\ 1 & \text{ for } x_2 \geq 1 \ \ },
\ \ \ \ \tchi_- = 1-\tchi_+.
\end{equation}
Since $\tchi_+ + \tchi_-=1$, we have
\begin{equation}
g'(P) = \sum_\pm \tchi_\pm(x_2) g'(P) =  \sum_\pm \tchi_\pm(x_2) \rho'(P) (P+i)^{-s}.
\end{equation}
Moreover, $P_\pm$ has no spectrum in the support of $g'$; since 
$\sigma_{L^2(\R^2)}(P_\pm) \subset \sigma_{L^2(\R^2)}(P)$, \eqref{eq:5p} implies that $\rho'(P_\pm) = 0$ and
\begin{equation}\label{eq:2c}
g'(P) = \sum_{\pm} \tchi_\pm(x_2) \big( \rho'(P) - \rho'(P_\pm) \big) (P+i)^{-s}.
\end{equation}

Let  $\trho$ be an almost analytic extension of $\rho$.  Then $\p_\olambda \trho$ is an almost analytic extension of $\rho'$.
We write \eqref{eq:2c} using the Helffer--Sj\"ostrand formula \eqref{eq:2e}: 
\begin{equations}\label{eq:5q}
g'(P) =  \sum_{\pm} \int_{\C^+} \dd{^2\trho(\lambda,\olambda)}{\lambda \p\olambda} \cdot \tchi_\pm(x_2) \left( (P-\lambda)^{-1} - (P_\pm-\lambda)^{-1} \right) \cdot \dfrac{d^2\lambda}{\pi} \cdot (P+i)^{-s}
\\
=   \sum_{\pm} \int_{\C^+} \dd{^2 \trho(\lambda,\olambda)}{\lambda \p\olambda} \cdot \tchi_\pm(x_2)  (P-\lambda)^{-1} (P_\pm-P) (P_\pm-\lambda)^{-1}  \cdot \dfrac{d^2\lambda}{\pi} \cdot (P+i)^{-s}.
\end{equations}

\textbf{3.}  We now observe that $\tchi_\pm(x_2) \in \Psi(m_{2,\pm}^s)$ and that $P_\pm-P \in \Psi(m_{2,\mp}^s\lr{\xi}^2)$. Moreover $(P-\lambda)^{-1}$ and $(P_\pm-\lambda)^{-1}$ are in $\Psi(1)$, with symbolic bounds blowing up at worst polynomially in $|\Im \lambda|^{-1}$, see \eqref{eq:5r}. Since $\p^2_{\lambda\olambda} \trho(\lambda,\olambda) = O(|\Im \lambda|^\infty)$ and $m_{2,+} m_{2,-} = \lr{x_2}^{-1}$, we deduce from the composition theorem (\S\ref{sec:222}):
\begin{equation}
\dd{^2 \trho(\lambda,\olambda)}{\lambda \p\olambda} \cdot \tchi_\pm(x_2)  (P-\lambda)^{-1} (P_\pm-P) (P_\pm-\lambda)^{-1} \in \Psi\left(\lr{x_2}^{-s} \lr{\xi}^2\right),
\end{equation}
uniformly in $\lambda$. We integrate this identity on $\C^+$ and multiply by $(P+i)^{-s}$ (which belongs to $\Psi(\lr{\xi}^{-2s})$, see \S\ref{sec:223}). We deduce from \eqref{eq:5q}:
\begin{equation}\label{eq:2h}
\forall s \in \N, \ \ 
g'(P) \in \Psi \left(\lr{x_2}^{-s}\lr{\xi}^{2-2s} \right), \  \ \ \  \text{i.e.} \ \ g'(P) \in \Psi \left(\lr{x_2}^{-\infty}\lr{\xi}^{-\infty} \right).
\end{equation}

\textbf{4.}  We combine \eqref{eq:2g} and \eqref{eq:2h} to obtain
\begin{equation}\label{eq:2r}
[P,F(x_1)] g'(P) \ \in \ \Psi \left( \lr{x_1}^{-\infty} \lr{x_2}^{-\infty} \lr{\xi}^{-\infty} \right) = \Psi \left( \lr{x}^{-\infty} \lr{\xi}^{-\infty} \right).
\end{equation}
Hence $[P,F(x_1)] g'(P)$ is trace class, see \S\ref{sec:225}. 
\end{proof}

\begin{proof}[Proof of Lemma \ref{lem:2a}] It suffices to show that if $f_0(x_1) \in C_0^\infty(\R)$ then 
\begin{equation}
\Tr_{L^2(\R^2)} \big( [P,f_0(x_1)] g'(P) \big) = 0.
\end{equation}
We have $f_0 \in \Psi\big(\lr{x_1}^{-\infty}\big)$. Using \eqref{eq:2h}, we deduce that both $P f_0(x_1) g'(P)$ and $f_0(x_1) P g'(P)$ are in $\Psi\big(\lr{x}^{-\infty}\lr{\xi}^{-\infty}\big)$. Hence both are trace-class. In particular,
 \begin{equation}
0 = \Tr_{L^2(\R^2)} \big( P f_0(x_1)  G'(P) \big) - \Tr_{L^2(\R^2)} \big( f_0(x_1)   G'(P) P  \big) =  \Tr_{L^2(\R^2)} \big( [P, f_0(x_1) ]  G'(P) \big).
\end{equation}
This completes the proof. \end{proof}

As in \cite{CG:05}, the proof of Lemma \ref{lem:1f} requires a preliminary result.

\begin{lem}\label{lem:1d} Let $P_1$ and $P_2$ satisfying \textbf{(a)}, \textbf{(b)} and \textbf{(c)} in \S\ref{sec:21}, and such that $P_1-P_2$ vanishes outside a compact set. Then $\JJ_e(P_1) = \JJ_e(P_2)$.
\end{lem}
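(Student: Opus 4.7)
The plan is to express $\JJ_e(P_2) - \JJ_e(P_1)$ as a sum of two commutator traces and to show that each vanishes by cyclicity, exploiting the compact support of $V \de P_2 - P_1 \in \Psi(\lr{x}^{-\infty}\lr{\xi}^2)$.

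First I will use the functional-calculus identity $[P_j, g'(P_j)] = 0$ to rewrite $[P_j, f(x_1)] g'(P_j) = [P_j, f(x_1) g'(P_j)]$, so that $\JJ_e(P_j) = \Tr\big([P_j, f(x_1) g'(P_j)]\big)$. Adding and subtracting $[P_1, f(x_1) g'(P_2)]$ then gives the decomposition
\begin{equation*}
\JJ_e(P_2) - \JJ_e(P_1) = \Tr\big([V, f(x_1) g'(P_2)]\big) + \Tr\big([P_1, f(x_1)(g'(P_2) - g'(P_1))]\big).
\end{equation*}

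For the first trace, \eqref{eq:2h} gives $g'(P_2) \in \Psi(\lr{x_2}^{-\infty}\lr{\xi}^{-\infty})$, so that when composed on either side with the compactly supported $V$ the product lies in $\Psi(\lr{x}^{-\infty}\lr{\xi}^{-\infty})$ and is trace class; cyclicity of the trace then forces $\Tr\big([V, f(x_1) g'(P_2)]\big) = 0$.

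For the second trace, the aim is to show $g'(P_2) - g'(P_1) \in \Psi(\lr{x}^{-\infty}\lr{\xi}^{-\infty})$: once this is known, $f(x_1)(g'(P_2) - g'(P_1))$ lies in the same class, and its products on either side with $P_1 \in \Psi(\lr{\xi}^2)$ remain in $\Psi(\lr{x}^{-\infty}\lr{\xi}^{-\infty})$, hence are trace class, so cyclicity forces the commutator trace to be zero. To produce the required decay on $g'(P_2) - g'(P_1)$, I will combine two ingredients: the spectral-cutoff device of Step 2 in the proof of Lemma \ref{lem:1c} (factoring an auxiliary $\rho$ with $\rho'(P_\pm) = 0$ and a power $(P_j+i)^{-s}$) to produce $\lr{\xi}^{-\infty}$-decay; and the resolvent identity $(P_2-\lambda)^{-1} - (P_1-\lambda)^{-1} = -(P_2-\lambda)^{-1} V (P_1-\lambda)^{-1}$ inserted into the Helffer--Sj\"ostrand representation, which transfers the $\lr{x}^{-\infty}$-decay of $V$ across the difference of resolvents.

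The main obstacle is this last step: the two decays must be produced simultaneously. The bare resolvent identity alone yields only $\lr{\xi}^{-2}$ of decay in $\xi$, while the bare spectral-cutoff argument alone yields only $\lr{x_2}^{-\infty}$ of decay in $x_2$. Combining them for the difference of operators requires careful bookkeeping that parallels, but is slightly more intricate than, the argument in the proof of Lemma \ref{lem:1c}.
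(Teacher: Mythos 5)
Your proof is correct and takes a genuinely different route from the paper's. The paper introduces $\rho$ and $(P_j+i)^{-s}$ into the Helffer--Sj\"ostrand integral, integrates by parts in $\lambda$, commutes $[P_j,f]$ past one resolvent to obtain the pieces $B_j,C_j$, and then expands $A_1(\lambda)-A_2(\lambda)$ explicitly as a finite sum of commutators $[D_k(\lambda),E_k(\lambda)]$ in which both $D_kE_k$ and $E_kD_k$ are trace class. You instead use the operator identity $[P_j,f]g'(P_j)=[P_j,f\,g'(P_j)]$ and the algebraic split $[V,f\,g'(P_2)]+[P_1,f(g'(P_2)-g'(P_1))]$, reducing the whole lemma to the single technical claim $g'(P_2)-g'(P_1)\in\Psi(\lr{x}^{-\infty}\lr{\xi}^{-\infty})$. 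That claim does hold and can be proved along the lines you sketch: choose a common $\rho$ with $\rho'(\lambda)=g'(\lambda)(\lambda+i)^s$ on $\sigma(P_1)\cup\sigma(P_2)$, write $g'(P_2)-g'(P_1)=\big(\rho'(P_2)-\rho'(P_1)\big)(P_2+i)^{-s}+\rho'(P_1)\big((P_2+i)^{-s}-(P_1+i)^{-s}\big)$, and use the resolvent identity in the Helffer--Sj\"ostrand representation of $\rho'(P_2)-\rho'(P_1)$ together with the paper's Step-3 expansion of $(P_2+i)^{-s}-(P_1+i)^{-s}$; in both terms the factor $V$ supplies $\lr{x}^{-\infty}$ and the power $(P_j+i)^{-s}$ supplies $\lr{\xi}^{2-2s}$. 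Two minor bookkeeping points in your sketch: with the uniform resolvent estimate \eqref{eq:5r}, which is only in $S(1)$, the bare resolvent identity gives $\lr{\xi}^{+2}$ of $\xi$-\emph{growth}, not $\lr{\xi}^{-2}$ of decay; and the $\tchi_\pm$/$\rho'(P_\pm)=0$ device of Lemma~\ref{lem:1c} is what produces $\lr{x_2}^{-\infty}$-decay of $g'(P_j)$ alone but is not needed for the difference, since the resolvent identity already transfers $\lr{x}^{-\infty}$ from $V$. Your route is conceptually cleaner, isolating one technical lemma and bypassing the paper's explicit commutator expansion, though the hidden work inside that lemma essentially reproduces the paper's Steps 3--4; the total effort is comparable.
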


\begin{proof} \textbf{1.}   Let $s, \rho$ as in the proof of Lemma \ref{lem:1c} so that for $j=1,2$,
\begin{equation}\label{eq:5v}
[P_j,f(x_1)] g'(P_j) = \int_{\C^+} \dd{^2\trho(\lambda,\olambda)}{\lambda \p \olambda} \cdot [P_j,f(x_1)](P_j-\lambda)^{-1} \cdot \dfrac{d^2\lambda}{\pi} \cdot (P_j+i)^{-s}.
\end{equation}
Our goal is to write the difference of \eqref{eq:5v} for $j=1,2$ in terms of commutators of trace-class operators. This will produce a vanishing trace and complete the proof.

\textbf{2.}  We integrate \eqref{eq:5v} by parts w.r.t. $\lambda$:
\begin{equation}
[P_j,f(x_1)] g'(P_j) = - \int_{\C^+} \dd{\trho(\lambda,\olambda)}{ \olambda} \cdot \big[ P_j,f(x_1) \big](P_j-\lambda)^{-2} \cdot \dfrac{d^2\lambda}{\pi} \cdot (P_j+i)^{-s}.
\end{equation}
We permute $\big[ P_j,f(x_1) \big]$ with one of the terms $(P_j-\lambda)^{-1}$. This allows us to write
\begin{equations}\label{eq:2o}
[P_j,f(x_1)] g'(P_j) = -  \int_{\C^+}  A_j(\lambda) \cdot \dfrac{d^2\lambda}{\pi}, 
\end{equations}
$\text{where} \  A_j(\lambda)  \de \p_\olambda \trho(\lambda,\olambda) \big( B_j(\lambda) + C_j(\lambda) \big),  \text{ with:}$ \begin{align}\label{eq:2l}
B_j(\lambda) & \de  - (P_j-\lambda)^{-1}[P_j,f(x_1)](P_j-\lambda)^{-1} \cdot  (P_j+i)^{-s}
\\
& \  =  \left[(P_j-\lambda)^{-1},f(x_1)\right] \cdot  (P_j+i)^{-s} =  \left[(P_j-\lambda)^{-1},f(x_1)(P_j+i)^{-s} \right];
\\
C_j(\lambda) & \de   \left[(P_j-\lambda)^{-1}, [P_j,f(x_1)]\right] (P_j-\lambda)^{-1}\cdot (P_j+i)^{-s}
\\
& \ = \left[(P_j-\lambda)^{-1}, [P_j,f(x_1)](P_j-\lambda)^{-1}(P_j+i)^{-s}\right].
\end{align}

\textbf{3.}  We derive a formula for $(P_1+i)^{-s} - (P_2+i)^{-s}$, obtained e.g. by taking $s-1$ derivatives with respect to $\mu$ of
\begin{equation}
(P_1-\mu)^{-1} - (P_2-\mu)^{-1} = (P_1-\mu)^{-1} (P_2-P_1) (P_2-\mu)^{-1}
\end{equation}
  and setting $\mu=-i$. 
Leibniz's formula and $\p_\mu^{s_j} (P_1-\mu)^{-1} = s_j! \cdot (P_1-\mu)^{-s_j-1}$ yield
\begin{equations}
(P_1+i)^{-s} - (P_2+i)^{-s} = \sum_{s_1+s_2=s-1} \dfrac{s_1!^2 s_2!^2}{(s-1)!} (P_1+i)^{-s_1-1} (P_2-P_1) (P_2+i)^{-s_2-1}.
\end{equations}

It follows that 
\begin{equations}
B_1(\lambda) - B_2(\lambda)  =  \left[(P_1-\lambda)^{-1} (P_2-P_1) (P_2-\lambda)^{-1},f(x_1)(P_1+i)^{-s}\right]
\\
 + \left[ (P_2-\lambda)^{-1},f(x_1)\sum_{s_1+s_2=s-1} \dfrac{s_1!^2 s_2!^2}{(s-1)!} (P_1+i)^{-s_1-1} (P_2-P_1) (P_2+i)^{-s_2-1}\right].
\end{equations}
Similarly, we find that $C_1(\lambda) - C_2(\lambda)$ is equal to
\begin{equations}
 \left[(P_1-\lambda)^{-1} (P_2-P_1) (P_2-\lambda)^{-1}, [P_1,f(x_1)](P_1-\lambda)^{-1}(P_1+i)^{-s}\right]
\\
+ \left[ (P_2-\lambda)^{-1},[P_1-P_2,f(x_1)](P_1-\lambda)^{-1}(P_1+i)^{-s} \right]
\\
+ \left[ (P_2-\lambda)^{-1},[P_2,f(x_1)](P_1-\lambda)^{-1}(P_2-P_1)(P_2-\lambda)^{-1}(P_1+i)^{-s} \right]
\\
+ \left[ (P_2-\lambda)^{-1},[P_2,f(x_1)](P_1-\lambda)^{-1} \hspace*{-3mm}\sum_{s_1+s_2=s-1} \dfrac{s_1!^2 s_2!^2}{(s-1)!} (P_1+i)^{-s_1-1} (P_2-P_1) (P_2+i)^{-s_2-1}\right].
\end{equations}

\textbf{4.}  The expressions of Step 3 allow us to expand $A_1(\lambda)-A_2(\lambda)$ as a finite sum of commutators $\sum_k\big[D_k(\lambda),E_k(\lambda) \big]$ with the following property. For each $k$, $D_k(\lambda) E_k(\lambda)$ and $E_k(\lambda) D_k(\lambda)$ are finite products of precisely one of each factor $\p_\olambda \trho(\lambda,\olambda)$ and $P_1-P_2$; at most three factors among $(P_j-\lambda)^{-1}$; one factor of the form $f(x_1)$ or $[P_j,f(x_1)]$; and $s$ or $s+1$ factors of the form $(P_j+i)^{-1}$.

We note that $P-Q \in \Psi(\lr{x}^{-1} \lr{\xi}^2)$; that $(P_j+i)^{-1} \in \Psi\big(\lr{\xi}^{-2}\big)$; that  $(P_j-\lambda)^{-1} \in \Psi(1)$ with symbolic bounds blowing up polynomially as $\Im \lambda \rightarrow 0$ -- see \eqref{eq:5r}; and that $\p_\olambda \trho(\lambda,\olambda) \in O(|\Im \lambda|^\infty)$. Therefore we deduce that for any $s \in \N$,
\begin{equation}
D_k(\lambda) E_k(\lambda), \ E_k(\lambda) D_k(\lambda) \in \Psi\big(\lr{x}^{-s} \lr{\xi}^{-2s+4}\big),
\end{equation}
uniformly in $\lambda$. In particular both $D_k(\lambda) E_k(\lambda)$ and $E_k(\lambda) D_k(\lambda)$ are trace class. We deduce that $A_1(\lambda)-A_2(\lambda)$ is (uniformly in $\lambda$) trace class with vanishing trace. The formula \eqref{eq:2o} completes the proof.
\end{proof}

\begin{proof}[Proof of Lemma \ref{lem:1f}]
\textbf{1.} In comparison with Lemma \ref{lem:1d}, the operator $P_1 - P_2$ vanishes now in a (non-compact) strip $|x_2| \leq L'$. We prove lemma \ref{lem:1f} using Lemma \ref{lem:1d} and an approximation argument. 

  Fix $\epsi > 0$, $\chi(x) \in C^\infty_0(\R^2,\R)$ equal to $1$ for $|x| \leq 1$ and $P_3 = \Re\big(P_1 + \chi(\epsi x) (P_2-P_1)\big)$, where we recall that $\Re(T) = \frac{T^*+T}{2}$. We note that $P_3$ is an elliptic selfadjoint operator of order $2$, equal to $P_1$ outside a compact set. From Lemma \ref{lem:1d}, $\JJ_e(P_1) = \JJ_e(P_3)$ thus
\begin{equation}
\JJ_e(P_2) - \JJ_e(P_1) = \JJ_e(P_2) - \JJ_e(P_3).
\end{equation}

\textbf{2.}  Let $s, \rho$ as in the proof of Lemma \ref{lem:1c}. 
We write for $j=2, 3$:
\begin{equation}
[P_j,f(x_1)] g'(P_j) =  \int_{\C^+} \dd{^2\trho(\lambda,\olambda)}{\lambda \p \olambda} \cdot [P_j,f(x_1)](P_j-\lambda)^{-1} \cdot (P_j+i)^{-s} \cdot \dfrac{d^2\lambda}{\pi}.
\end{equation}
We observe that
\begin{equations}\label{eq:2s}
[P_2,f(x_1)](P_2-\lambda)^{-1}  \cdot (P_2+i)^{-s} - [P_3,f(x_1)](P_3-\lambda)^{-1} \cdot  (P_3+i)^{-s}
\\
= [P_2-P_3,f(x_1)](P_2-\lambda)^{-1}  \cdot (P_2+i)^{-s} 
\\
+
[P_3,f(x_1)](P_2-\lambda)^{-1}(P_3-P_2)(P_3-\lambda)^{-1} \cdot  (P_3+i)^{-s}
\\
+
[P_3,f(x_1)](P_2-\lambda)^{-1}  \cdot \sum_{s=s_2+s_3} \dfrac{s_2!^2 s_3!^2}{(s-1)!} (P_2+i)^{-s_2-1}(P_3-P_2)(P_3+i)^{-s_3-1}.
\end{equations}

\textbf{3.}  The bounds that we prove below are all uniform as $\epsi \rightarrow 0$. We observe that
\begin{equation}
P_3-P_2 = \Re\big((\chi(\epsi x)-1\big) (P_2-P_1)\big).
\end{equation}
This vanishes when $|x| \leq \epsi^{-1}$. In particular, $P_3-P_2 \in \epsi \lr{x_1} \cdot \Psi(\lr{\xi}^2)$ with uniform symbolic bounds as $\epsi \rightarrow 0$. 

Since $[P_3-P_2,f]$ and $[P_3,f]$ are in $\Psi(\lr{x_1}^{-\infty} \lr{\xi}^2)$, we deduce from \eqref{eq:2s} that
\begin{equation}
\dfrac{1}{\epsi} \left([P_2,f(x_1)](P_2-\lambda)^{-1}  \cdot (P_2+i)^{-s} - [P_3,f(x_1)](P_3-\lambda)^{-1} \cdot  (P_3+i)^{-s} \right)
\end{equation}
is in $\Psi\big(\lr{\xi}^{4-2s}\big)$. The symbolic bounds blow up polynomially as $\Im \lambda \rightarrow 0$. Thus
\begin{equation}\label{eq:2q}
[P_2,f(x_1)]g'(P_2) - [P_3,f(x_1)]g'(P_3) \ \in \ \epsi \cdot \Psi(\lr{\xi}^{-\infty}).
\end{equation}

\textbf{4.}  From \eqref{eq:2r}, we also have $[P_j,f(x_1)]g'(P_j) \in \Psi(\lr{\xi}^{-\infty} \lr{x}^{-\infty})$. We deduce that \eqref{eq:2q} belongs to $\Psi(\lr{\xi}^{-\infty} \lr{x}^{-\infty})$. Interpolating at the symbolic level, we get 
\begin{equation}
[P_2,f(x_1)]g'(P_2) - [P_3,f(x_1)]g'(P_3) \ \in \ \epsi^{1/2} \cdot \Psi\big(\lr{\xi}^{-\infty}\lr{x}^{-\infty}\big).
\end{equation}
In particular, $[P_2,f(x_1)]g'(P_2) - [P_3,f(x_1)]g'(P_3)$ is trace-class and its trace is $O(\epsi^{1/2})$. We conclude that
\begin{equation}
\JJ_e(P_1) - \JJ_e(P_2) = O(\epsi^{1/2})
\end{equation}
for every $\epsi \in (0,1)$; this completes the proof.\end{proof}

\begin{proof}[Proof of Lemma \ref{lem:1e}] \textbf{1.}  From the properties of $\psi$, $g' \circ \psi = g'$. Moreover, since the spectrum of $P$ is bounded below, there exists $\vp(\lambda) \in C^\infty_0(\R)$ such that $\psi(P) = \lambda_2 + \vp(P)$ and $\psi'(P) = \vp'(P)$.  It follows that 
\begin{equations}\label{eq:3a}
\big[ \psi(P), f(x_1) \big] g' \circ \psi(P) = \big[ \vp(P), f(x_1) \big] g'(P).
\end{equations}

We use the Helffer--Sj\"ostrand formula to write
\begin{equation}\label{eq:3c}
\vp(P) =  \int_{\C^+} \dd{\tvp(\lambda,\olambda)}{\olambda} \cdot  (P-\lambda)^{-1} \cdot \dfrac{d^2\lambda}{\pi}. 
\end{equation}
Since  $(P-\lambda)^{-1} \in \Psi(1)$ with bounds blowing up polynomially with $|\Im \lambda|^{-1}$, $\vp(P) \in \Psi(1)$. As for \eqref{eq:2g}, $\big[ \vp(P), f(x_1) \big] \in \Psi(\lr{x_1}^{-\infty})$. From \eqref{eq:2h}, $g'(P) \in \Psi\big( \lr{x_2}^{-\infty} \lr{\xi}^{-\infty} \big)$. We deduce from \eqref{eq:3a} that
\begin{equation}
 \big[ \psi(P), f(x_1) \big] g' \circ \psi(P) \ \in \ \Psi\left(\lr{x}^{-\infty} \lr{\xi}^{-\infty}\right).
\end{equation}
Hence $\big[ \psi(P), f(x_1) \big] g' \circ \psi(P)$ is trace-class and $\JJ_e\big( \psi(P) \big)$ is properly defined, with
\begin{equation}
\JJ_e\big( \psi(P) \big) = \Tr_{L^2(\R^2)} \big( \big[ \vp(P), f(x_1) \big] g'(P) \big).
\end{equation}

\textbf{2.}  Because of \eqref{eq:3c},
\begin{equations} \label{eq:2x}
[\vp(P),f(x_1)] g'(P) = \int_{\C^+} \dd{\tvp(\lambda,\olambda)}{\olambda} \cdot \left[(P-\lambda)^{-1},f(x_1) \right] g'(P) \cdot \dfrac{d^2\lambda}{\pi}
\\
= -\int_{\C^+} \dd{\tvp(\lambda,\olambda)}{\olambda} \cdot (P-\lambda)^{-1} \big[P,f(x_1)\big] (P-\lambda)^{-1} g'(P) \cdot \dfrac{d^2\lambda}{\pi}.
\end{equations}
Recall that $g'(P) \in \Psi\big(\lr{x_2}^{-\infty} \lr{\xi}^{-\infty}\big)$; $(P-\lambda)^{-1} \in \Psi(1)$ (with bounds blowing up polynomially in $|\Im \lambda|^{-1}$); and $[f(x_1),P] \in \Psi\big(\lr{x_1}^{-\infty} \lr{\xi}^2\big)$. Since $\p_\olambda \tvp(\lambda,\olambda) = O( |\Im \lambda|^\infty )$, we deduce  that
\begin{equation} 
\dd{\tvp(\lambda,\olambda)}{\olambda}  \left[f(x_1),P\right] (P-\lambda)^{-1} g'(P) \ \in  \ \Psi\left( \lr{x}^{-\infty}\lr{\xi}^{-\infty} \right),
\end{equation}
uniformly in $\lambda$. Thus we can trace \eqref{eq:2x} and permute trace and integral. We can also move $(P-\lambda)^{-1}$ cyclically from the left to the right. We end up with
\begin{equations}
\JJ_e\big( \psi(P) \big)
  =   - \int_{\C^+} \dd{\tvp(\lambda,\olambda)}{\olambda} \cdot  \Tr_{L^2(\R^2)} \left( (P-\lambda)^{-1} \big[P,f(x_1)\big] (P-\lambda)^{-1} g'(P) \right) \cdot  \dfrac{d^2\lambda}{\pi}
\end{equations}
\begin{equations}\label{eq:5k}
 =  - \int_{\C^+} \dd{\tvp(\lambda,\olambda)}{\olambda} \cdot  \Tr_{L^2(\R^2)} \left( \big[P,f(x_1)\big] (P-\lambda)^{-2} g'(P) \right) \cdot \dfrac{d^2\lambda}{\pi}.
\end{equations}
We observe that $(P-\lambda)^{-2} = \p_\lambda (P-\lambda)^{-1}$. We integrate \eqref{eq:5k} w.r.t. $\lambda$:
\begin{equations}
\JJ_e\big( \psi(P) \big) =   \int_{\C^+} \dd{^2\tvp(\lambda,\olambda)}{\lambda \p\olambda} \cdot \Tr_{L^2(\R^2)} \left( \big[P,f(x_1)\big] (P-\lambda)^{-1} g'(P) \right) \dfrac{d^2\lambda}{\pi}.
\end{equations}
We permute trace and integral once again and end up with
\begin{equation}\label{eq:3y}
\JJ_e(P) = \Tr_{L^2(\R^2)}\big([P,f(x_1)] \vp'(P)g'(P)\big).
\end{equation}
This completes the proof because $\vp'(P) = \psi'(P)$ and $\psi'(\lambda) = 1$ on the support of $g'$: the RHS of \eqref{eq:3y} is $\JJ_e(P)$. 
\end{proof}

\subsection{Deformation to a semiclassical operator}\label{sec:24} We recall that $\Re(T) = \frac{T+T^*}{2}$. Let $\chi_+(x_2), \ \chi_-(x_2) \in C^\infty(\R)$ and $\chi_0(x_2) \in C^\infty_0(\R)$ such that
\begin{equation}
\chi_+(x_2) = \systeme{1 & \text{ for } x_2 \geq 2 \\ 0 & \text{ for } x_2 \leq 1}, \ \ \ \chi_+(x_2) = \systeme{1 & \text{ for } x_2 \leq - 2 \\ 0 & \text{ for } x_2 \geq -1}, \ \ \ \chi_0 = 1-\chi_--\chi_+. 
\end{equation}
See Figure \ref{fig:6}. 
Given $h > 0$, we introduce
\begin{equations}\label{eq:1c}
P_h \de  \Re\Bigg(\sum_{|\az| \leq 2} b_\az(hx,x) D_x^\az \Bigg) +\Re\Big( \chi_0(hx_2) P_0 \Big)
,  \ \ \   \text{where} \\ 
b_\az(x,y) \de \chi_+(x_2)a_{\az,+}(y) + \chi_-(x_2) a_{\az,-}(y), \ \ \ \ P_0 \de -\Delta + |\lambda_0| + 2.
\end{equations}

\begin{figure}
\floatbox[{\capbeside\thisfloatsetup{capbesideposition={right,center},capbesidewidth=2.8in}}]{figure}[\FBwidth]
{\caption{Graphs of $\chi_-, \chi_0$ and $\chi_+$.}\label{fig:6}}
{\begin{tikzpicture}

\definecolor{grine}{rgb}{.34, .52, .25}

\node at (0,0) {\includegraphics{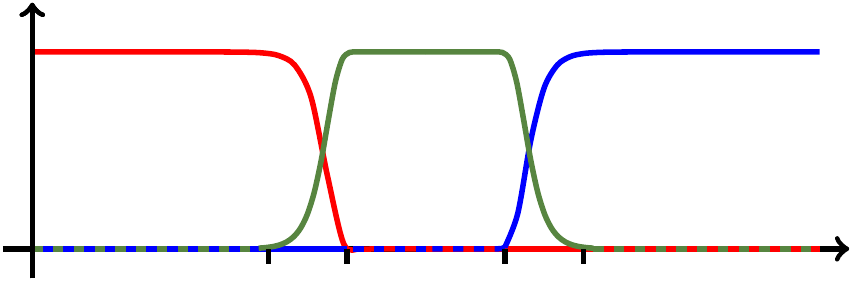}};

\node[red] at (-3,1.3) {$\chi_-(x_2)$}; 
\node[blue] at (3.15,1.3) {$\chi_+(x_2)$}; 
\node[grine] at (0,1.3) {$\chi_0(x_2)$};

\node  at (-1.8,-1.5) {$-2$}; 
\node  at (1.6,-1.5) {$2$}; 
\node  at (-0.8,-1.5) {$-1$}; 
\node  at (0.8,-1.5) {$1$}; 
\node  at (4,-.8) {$x_2$}; 
\node  at (-4.3,.85) {$1$};

   \end{tikzpicture}}
   \vspace*{-.3cm}
\end{figure}

The operator $P_h$ is a symmetric differential operator of order $2$. Below, we write
\begin{equation}
P_h = \sum_{|\az| \leq 2} c_\az(hx,x) D_x^\az,
\end{equation}
rather than \eqref{eq:1c}. The coefficients $c_\az(x,y)$ have a two-scale structure: they belong to $C^\infty_b(\R^2 \times \Tt^2)$. Their dependence in $h$ is polynomial, in particular they remain uniformly bounded as $h \rightarrow 0$.

If $u(x) \in C^\infty(\R^2)$ has support in $\{ \pm hx_2 \geq 2 \}$ then the coefficients $b_\az(hx,x)$ are equal to $a_{\az,\pm}(x)$ on the support of $u$ and 
\begin{equation}
Pu = \Re\Bigg(\sum_{|\az|\leq 2} a_{\az,\pm}(x) D_x^\az\Bigg) u = \Re(P_\pm) u = P_\pm u.
\end{equation}
This implies that
\begin{equation}\label{eq:5w}
c_\az(x,y) = \systeme{a_{\az,+}(y) & \text{ for } x_2  \geq 2 \ \  \\ a_{\az,-}(y) & \text{ for } x_2 \leq -2 } . 
\end{equation}
In other words, $P_h$ is equal to $P$ outside $|hx_2| \leq 2$. We similarly observe that if $u$ has support in $\{ |hx_2| \leq 1 \}$ then $P_h u = P_0u$. Since $\sigma_{L^2(\R^2)} (P_0) = [|\lambda_0|+2,\infty)$, $P_0$ heuristically behaves as a barrier between $P_+$ and $P_-$ at energies below $|\lambda_0|+2$. This can be ignored in \S\ref{sec:3}. It will play a role in \S\ref{sec:4}. 

 Finally, we observe that $P_h$ is elliptic. Indeed, since $P$ is elliptic, $P_\pm$ are elliptic (with, say, ellipticity constant $0 < c \leq 1$) and
\begin{equations}
 \sum_{|\az| = 2} \Re\big( b_\az(hx,x) \big) \xi^\az + \chi_0(hx_2) |\xi|^2 =  \sum_\pm \chi_\pm(hx_2) \sum_{|\az| = 2}  \Re\big(a_{\az,\pm}(x)\big) \xi^\az + \chi_0(hx_2) |\xi|^2
\\
\geq \big( \chi_+(hx_2) + \chi_-(hx_2) + \chi_0(hx_2)\big) \cdot c|\xi|^2 = c|\xi|^2.
\end{equations}

This proves that $P_h$ satisfies the assumptions of \S\ref{sec:21}. From Lemmas \ref{lem:2a} and \ref{lem:1f},
\begin{equation}\label{eq:1e}
\JJ_e(P) = \JJ_e(P_h) = \Tr_{L^2(\R^2)}\Big( \big[ P_h,f(hx_1) \big] g'(P_h) \Big).
\end{equation}
The key observation is that $P_h$ is, in an appropriate sense, a semiclassical operator. We give here a formal explanation and we postpone the rigorous version  \cite{GMS:91} to \S\ref{sec:3}.  Let $U(x,y) \in C^\infty(\R^2 \times \Tt^2)$ and set $u(x) = U(hx,x)$. Then 
\begin{equation} \label{eq:1d}
P_h u(x) = \big(\Pp_h U\big)(x,hx) \ \ \ \ \text{where} \ \ \ \ 
\Pp_h \de \sum_{|\az| \leq 2} c_\az(x,y) (D_y+hD_x)^\az.
\end{equation}
The operator $\Pp_h$ is semiclassical in $x$ with operator valued-symbol $\Pp(x,\xi) + O(h)$, where
\begin{equation}
\Pp(x,\xi) \de \sum_{|\az| \leq 2} c_\az(x,y) (D_y+\xi)^\az + O(h).
\end{equation}

\section{Semiclassical deformation and effective Hamiltonian}\label{sec:3}

In the rest of the paper, we compute $\JJ_e(P)$ using the operator $P_h$ defined in \eqref{eq:1c}. To emphasize that $\JJ_e(P)$ depends only on $P_+$ and $P_-$, we write below
\begin{equation}
\JJ_e(P_-,P_+) \de \JJ_e(P).
\end{equation}

Operators $P_h$ of the form \eqref{eq:1c} first appeared in solid state physics in the 70's. The first mathematical works constructed WKB quasimodes \cite{Bu:87,GRT:88}. Here, a key paper is G\'erard--Martinez--Sj\"otrand \cite{GMS:91}. It establishes a unitary equivalence between $P_h$ acting on $L^2(\R^2)$ and $\Pp_h$ -- see \eqref{eq:1d} -- acting on
\begin{equation}
\HH_1 \de \Bigg\{ \sum_{m \in \Z^2} v(x) \delta(x-hy+hm), \ v \in L^2(\R^2) \Bigg\}  \subset \SSS'\left(\R^2 \times \Tt^2\right).\footnote{This space is denoted $L_0$ in \cite{GMS:91} and \cite[\S13]{DS:99}. It identifies with $L^2(\R^2)$, see \S\ref{sec:314}.}
\end{equation}

This equivalence yield a semiclassical formula for the edge index: 
from \eqref{eq:1e},
\begin{equation}\label{eq:3f}
\JJ_e(P_-,P_+) = \Tr_{L^2(\R^2)} \left( \big[ P_h,f(x_1)\big] g'(P_h) \right) = \Tr_{\HH_1} \left( \big[ \Pp_h,f(x_1)\big] g'(\Pp_h) \right).
\end{equation} 
Another important advance of \cite{GMS:91} is the construction of an effective Hamiltonian $E_{22}(\lambda)$ for $P_h$. This provides a discrete singular value problem whose solutions are precisely the eigenvalues of $P_h$ (within a given spectral window).

Dimassi, Zerzeri and Duong \cite{Di:93,DZ:03,DD:14} used \cite{GMS:91} to provide various semiclassical trace expansions for operators in the form \eqref{eq:1c}, see for instance \eqref{eq:0k}. In principle, the coefficients $b_j$ in \eqref{eq:0k} can be expressed from semiclassical symbols. We expect a similar expansion here:
\begin{equation}\label{eq:4m}
\Tr_{\HH_1} \left( \big[ \Pp_h,f(x_1)\big] g'(\Pp_h) \right) \  \sim   \  \sum_{j \geq 0} a_j \cdot h^{j-2} \ \ \ \ \text{as } h \rightarrow 0,
\end{equation}
with coefficients $a_j$ computable via symbolic calculus. 
However, \eqref{eq:3f} indicates that \eqref{eq:4m} does not depend on $h$. Hence all terms $a_j, j \neq 2$ in the expansion \eqref{eq:4m} must vanish and $a_2 = \JJ_e(P_-,P_+)$.

From the technical point of view, \S\ref{sec:3} is closer to \cite{GMS:91,Di:93} than to previous papers on the bulk-edge correspondence. As in \cite{Di:93}, we will pose a Grushin problem and construct a discrete (finite difference) effective Hamiltonian whose singular values describe accurately $\Pp_h$ near energy $\lambda_0$. 

We will use symbolic calculus to derive a formula for $a_2$.
Specifically, we will adapt calculations of Elgart--Graf--Schenker \cite{EGS:05} from the eigenvalue setting to the singular value setting. This will prove that $\JJ_e(P_-,P_+)$ is (up to summation) a double semiclassical commutator. This proves $a_0=a_1 = 0$; and allows us to compute $a_2$ in terms of the leading symbol of the effective Hamiltonian, $E_{22}(x,\xi;\lambda)$ -- see Theorem \ref{thm:1}. An algebraic manipulation reduces the formula for $a_2$ to an integral involving only asymptotics of $E_\pm(\xi;\lambda)$ of $E_{22}(x,\xi;\lambda)$ as $x_2 \rightarrow \pm \infty$ -- see Theorem \ref{thm:2}. 

We will connect $E_\pm(\xi;\lambda)$ to Chern numbers in \S\ref{sec:4}, completing the proof of~Theorem~\ref{thm:0}.

\subsection{Semiclassical calculus}\label{sec:31} We start this section with a review of semiclassical calculus. While pseudodifferential calculus purely measures regularity, semiclassical calculus allows for the quantitative study of frequencies of order $h^{-1}$, $h \rightarrow 0$. The textbooks \cite{DS:99,Zw:12} provide excellent introductions. The results exposed below are presented in \cite[\S7-8 and \S13]{DS:99}; see also \cite[\S4 and \S13]{Zw:12}.

We say that a symbol $a(x,\xi) \in C^\infty(\R^2 \times \R^2)$ (implicitly depending on $h$) belongs to $S(m)$ if \eqref{eq:5o} holds with bounds $C_\az$ uniform in $h \in (0,1]$. We then define
\begin{equation}
\big(\Op_h(a) u\big)(x) \de \dfrac{1}{(2\pi h)^2} \int_{\R^2 \times \R^2} e^{i\frac{\xi}{h}(x-x')} a \left( \dfrac{x+x'}{2},\xi \right) u(x') dx'd\xi, \ \ \ \ u \in C^\infty_0(\R^2).
\end{equation}
Such operators have bounded extensions on $\SSS(\R^2)$ and we denote the corresponding class by $\Psi_h(m) = \Op_h\big( S(m) \big)$. In the sequel, we will allow for symbols valued in Hilbert spaces, typically $\C^d$ or $L^2(\Tt^2)$. 

\subsubsection{Composition} See \cite[\S7]{DS:99} and \cite[\S4.3-4.4]{Zw:12}. If $a \in S(m_1)$ and $b \in S(m_2)$ then $\Op_h(a) \Op_h(b) \in \Psi_h(m_1m_2)$. We denote its symbol by $a \# b$. One clear advantage of semiclassical over pseudodifferential calculus is the composition formula: for any $K$,
\begin{equation}
a\# b(x,\xi) = \sum_{k=0}^K \dfrac{i^k h^k}{k!} \left. \left(\dfrac{D_\xi D_{x'}-D_xD_{\xi'}}{2}\right)^k \big( a(x,\xi) b(x',\xi') \big) \right|_{\substack{x'=x \\ \xi'=\xi}} + O_{S(m_1m_2)}\left(h^{K+1}\right).
\end{equation}
It implies that $a\#b(x,\xi)$ depends only of $a$ and $b$ locally near $(x,\xi)$, modulo a small remainder, $O(h^\infty)$. We will use the explicit expansion only for $K=0$ and $K=1$:
\begin{itemize}
\item $\Op_h(a) \Op_h(b)$ has symbol $ab + O_{S(m_1m_2)}(h)$;
\item $\big[\Op_h(a), \Op_h(b)\big]$ has symbol
\begin{equation}
\dfrac{h}{2i} \big(\{a,b\} - \{b,a\}\big) + O_{S(m_1m_2)}\left(h^2\right),\ \footnote{This reduces to $\frac{h}{i}\{a,b\}$ when $a$ or $b$ is scalar-valued; however most operators considered below will be matrix and operator-valued.} \ \ \ \ \text{where} \ \ \{a,b\} \de \sum_{j=1}^2 \dd{a}{\xi_j} \dd{b}{x_j} - \dd{a}{x_j} \dd{b}{\xi_j}.
\end{equation}
\end{itemize}

From the composition formula, if $a(x,\xi) \in S(1)$ satisfies $\inf_{\R^2 \times \R^2} \big|a(x,\xi)\big| > 0$ then $\Op_h(a)$ is invertible for $h$ sufficiently small. The semiclassical version of a theorem of Beals \cite{Be:77} implies that its inverse is in $\Psi_h(1)$.

\subsubsection{Resolvents and functional calculus}\label{sec:312} See \cite[\S8]{DS:99}. Let $a \in S(1)$ be Hermitian-valued. Then $\Op_h(a) \in \Psi_h(1)$ is a selfadjoint operator. Moreover, for every $\lambda \in \C^+$, the resolvent $(\Op_h(a) -\lambda)^{-1}$ is also in $\Psi_h(1)$. 

If $r(\cdot;\lambda) \in S(1)$ is such that $\Op_h\big(r(\cdot,\lambda)\big) = (\Op_h(a)-\lambda)^{-1}$, then for any $R > 0$, the following estimates hold uniformly for $\lambda \in \Dd(0,R)$ and $h \in (0,1]$:
\begin{equations}\label{eq:0g}
r(\cdot;\lambda) = \big( a - \lambda)^{-1} + O_{S(1)}\left( h \cdot |\Im \lambda|^{-8}\right);
\\
\sup_{(x,\xi) \in \R^2} \left| \p^\az r(x,\xi;\lambda) \right| \leq c_{\az,R} \cdot \max\left( 1, \dfrac{h^{1/2}}{|\Im \lambda|} \right)^5 \cdot |\Im \lambda|^{-|\az|-1}. \ 
\end{equations}
Using the estimates \eqref{eq:0g} and the Helffer--Sj\"ostrand formula \eqref{eq:2d}, we can develop the functional calculus of selfadjoint semiclassical operators. If $\vp(\lambda) \in C^\infty_0(\R)$, then $\vp\big( \Op_h(a) \big) \in \Psi_h(1)$ and its symbol is 
\begin{equation}
\vp\big( a(x,\xi) \big) + O_{S(1)}(h).
\end{equation}

\subsubsection{Trace class} See \cite[\S8]{DS:99}. Similarly to \S\ref{sec:224}, if $m$ is an order function in $L^1$ then operators in $\Psi_h(m)$ are trace-class. Moreover, there exists $C > 0$ such that for any $a \in S(m)$,
\begin{equation}
\big\| \Op_h(a) \big\|_\Tr \leq Ch^{-2} \cdot |m|_{L^1}  \cdot \sup_{|\az| \leq 5} C_\az \ \footnote{The numbers $2$ and $5$ are specific to dimension $n=2$; in general they are $n$ and $2n+1$, respectively.}
\end{equation}
where the constants $C_\az$ are those of \eqref{eq:5o}.

\subsubsection{Periodic and equivariant classes}\label{sec:314} See \cite[\S13]{DS:99}. We will need to consider classes of operator-valued symbols satisfying certain (pseudo-)periodic conditions. Fix $d \in \N$. We introduce:
\begin{itemize}
\item The class $S^{(22)}(m) \subset S(m)$ of symbols $a(x,\xi) \in C^\infty\big(\R^2 \times \R^2, M_d(\C)\big)$ such that
\begin{equation}
a(x,\xi+2k\pi) = a(x,\xi), \ \ \ k \in \Z^2;
\end{equation}
\item The class $S^{(12)}(m) \subset S(m)$ of symbols $R(x,\xi) \in C^\infty\big(\R^2 \times \R^2, \BB\big(\C^d,L^2(\Tt^2)\big)\big)$ -- i.e. with values in linear operators from $\C^d$ to $L^2(\Tt^2)$ -- such that
\begin{equation}
R(x,\xi+2k\pi) = e^{-2ik\pi y} \cdot R(x,\xi), \ \ \ k \in \Z^2;
\end{equation}
\item The class $S^{(21)}(m)$ of adjoints of symbols in $S^{(12)}(m)$;
\item The class $S^{(11)}(m) \subset S(m)$ of symbols $\Ww(x,\xi) \in C^\infty\big(\R^2 \times \R^2, \BB\big(L^2(\Tt^2)\big)\big)$~with
\begin{equation}\label{eq:1w}
\Ww(x,\xi+2k\pi) = e^{-2ik\pi y} \cdot \Ww(x,\xi) \cdot e^{2ik\pi y}, \ \ \ k \in \Z^2.
\end{equation}
\end{itemize}
We let $\Psi_h^{(jk)}(m) = \Op_h\big(S^{(jk)}(m)\big)$ be the corresponding operator classes; we observe that $\Pp(x,\xi) \in \Psi_h^{(11)}\big(\lr{\xi}^2\big)$. Because of the (pseudo-)periodic conditions, if $m$ decays with $\xi$ then $\Psi^{(jk)}_h(m) = \{0\}$. The order function $m$ may nonetheless decay with $x$.

The classes $\Psi_h^{(jk)}(m)$ appear in relation with the effective Hamiltonian method of \cite{GMS:91}. From the general theory of Pdos, they act on tempered distributions; for instance, operators in $\Psi_h^{(11)}(m)$ act on $\SSS'(\R^2 \times \Tt^2)$. The pseudo-periodic conditions yield additional mapping properties. If $m$ is uniformly bounded in $x$, then operators in $\Psi_h^{(jk)}(m)$ map $\HH_j$ to $\HH_k$, where
\begin{equations}\label{eq:0n}
\HH_1 =  \Bigg\{ \sum_{m \in \Z^2} v(x) \delta(x-hy+hm), \ v \in L^2\big(\R^2\big) \Bigg\}  \subset \SSS'\left(\R^2 \times \Tt^2\right),
\\
\HH_2 \de  \Bigg\{ \sum_{m \in \Z^2} v_m \delta(x-hm), \ v \in \ell^2\big(\Z^2,\C^d\big) \Bigg\}  \subset \SSS'\left(\R^2,\C^d\right).
\end{equations}
The space $\HH_2$ is naturally isomorphic to $\ell^2(\Z^2,\C^d)$. Similarly, $\HH_1$ is isomorphic to $L^2(\R^2)$. Indeed, for any $v(x) \in L^2(\R^2)$, 
\begin{equation}
\int_{\Tt^2} \sum_{m \in \Z^2} v(x) \delta(x-hy+hm) \cdot h^2 dy = v(x),
\end{equation}
where equality holds in the sense of distributions on $\R^2$.

A consequence is that $\Pp_h$ acts on $\HH_1$. In this sense, the elements of $\HH_1$ identify with the two-scale functions $U(hy,y)$ considered in \S\ref{sec:24}: in \eqref{eq:0n}, the Dirac masses constrain $x = hy$ modulo $(h\Z)^2$.

A result due to Dimassi \cite[\S1]{Di:93} -- and fundamental here -- asserts if $a \in S^{(22)}\big(\lr{x}^{-3}\big)$ then $\Op_h(a)$ is trace-class on $\HH_2$ and
\begin{equation}\label{eq:0f}
\Tr_{\HH_2}\big(\Op_h(a)\big) = \dfrac{1}{(2\pi h)^2} \int_{\R^2 \times \Tt_*^2} a(x,\xi)  \ dx d\xi + O(h^\infty). \ \footnote{Strictly speaking, \cite[Remark 1.3a]{Di:93} and \cite[Lemma 13.29]{DS:99} are stated for symbols in $S^{(22)}(1)$ that are compactly supported in $x$; the proof applies (with no change) to sufficiently decaying symbols.}
\end{equation}

\subsubsection{Grushin problem} Here we recall basic properties of 
Grushin problems; see for instance \cite[\S13]{DS:99} and \cite{SZ:07} for various applications.  Assume $Q : \HH_1 \rightarrow \HH_1$, $R_{12} : \HH_2 \rightarrow \HH_1$ and $R_{21} : \HH_2 \rightarrow \HH_1$ are three operators such that for $\lambda$ in a neighborhood of $\C$, the operator
\begin{equation}
\matrice{Q-\lambda & R_{12} \\ R_{21} & 0} \ : \ \HH_1 \oplus \HH_2 \rightarrow \HH_1 \oplus \HH_2
\end{equation}
is invertible. We write the inverse as
\begin{equation}\label{eq:1m}
\matrice{Q-\lambda & R_{12} \\ R_{21} & 0}^{-1} = \matrice{E_{11}(\lambda) & E_{12}(\lambda) \\ E_{21}(\lambda) & E_{22}(\lambda)}.
\end{equation}
Then the operators $E_{jk}(\lambda)$ depend analytically on $\lambda$. Moreover, $Q - \lambda$ is invertible on $\HH_1$ if and only if $E_{22}(\lambda)$ is invertible on $\HH_2$; and
\begin{equations}\label{eq:3x}
(Q - \lambda)^{-1} = E_{11}(\lambda) - E_{12}(\lambda) E_{22}(\lambda)^{-1} E_{21}(\lambda),
\\
E_{22}(\lambda)^{-1} = - R_{21} (Q - \lambda)^{-1} R_{12}.
\end{equations}

\subsection{Review of the effective Hamiltonian method}\label{sec:32} In the sequel, $\Omega$ is a bounded neighborhood in $\C^+$ of $\supp(\tg) \cap \C^+$, and $\Omega' \subset \Omega$ is a neighborhood of $\supp(\tg)$ with
\begin{equation}\label{eq:1z}
\ove{\Omega'} \cap \R \subset [\lambda_0-\epsilon,\lambda_0+\epsilon]. 
\end{equation}
We set $\lambda_+ = \sup\{ 2|\lambda| : \ \lambda \in \Omega \}$.

The idea behind the effective Hamiltonian method is to produce a singular value problem for a discrete Hamiltonian, that describe accurately low-energy spectral aspects of $\Pp_h$.
  We follow the construction of \cite{GMS:91}. It consists in finding $d \in \N$ and a pseudodifferential operator $R_{12} : \HH_1 \rightarrow \HH_2$ with its adjoint $R_{21} : \HH_2 \rightarrow \HH_1$ such that 
\begin{equation}\label{eq:1h}
\matrice{\Pp_h - \lambda & R_{12} \\ R_{21} & 0} \ : \  \HH_1 \oplus \HH_2 \rightarrow \HH_1 \oplus \HH_2
\end{equation}
is invertible for all $\lambda$ in $\Omega$. We refer to \cite[\S13]{DS:99} for a comprehensive presentation.

Following \cite{GMS:91}, there exists $\vp_1(y,\xi), \dots, \vp_d(y,\xi) \in C^\infty(\R^2 \times \R^2)$, satisfying
\begin{equations}\label{eq:1k}
\vp_j(y+\ell,\xi+2\pi k) = e^{-2i\pi ky} \cdot \vp_j(y,\xi); \ \ \ \ \big\langle\vp_m(\cdot, \xi),\vp_n(\cdot, \xi)\big\rangle_{L^2(\Tt^2)} = \delta_{nm},\end{equations}
such that for all $(x,\xi) \in \R^2 \times \R^2$,
\begin{equation}\label{eq:1f}
u \in \big[ \vp_1(\cdot,\xi), \dots, \vp_d(\cdot,\xi)\big]^\perp \ \Rightarrow \ 
 \blr{\big( \Pp(x,\xi) - \lambda_+) u ,u }_{L^2(\Tt^2)} \geq 3 |u|^2_{L^2(\Tt^2)}.
\end{equation}
For technical reasons, we prefer to work with the operator $\Qq_h = \psi(\Pp_h)$, where $\psi$ satisfies \eqref{eq:3k}. We note that $\JJ_e(P_-,P_+) = \JJ_e(P_h) = \JJ_e\big(\psi(P_h)\big)$, see Lemma \ref{lem:1f} and \ref{lem:1e}. Using the unitary equivalence between $P_h$ and $\Pp_h$, we deduce that
\begin{equation}
\JJ_e(P_-,P_+) = \Tr_{\HH_1} \Big( \big[ \Qq_h, f(x_1) \big] g'(\Qq_h) \Big).
\end{equation}

The operator $\Qq_h$ is in $\Psi_h^{(11)}(1)$ and its leading symbol is the bounded operator
\begin{equation}
\Qq(x,\xi) = \psi\big( \Pp(x,\xi) \big) \ : \ L^2(\Tt^2) \rightarrow L^2(\Tt^2),
\end{equation}
because $\psi(\Pp_h) = \lambda_2 + \vp(\Pp_h)$ for some $\vp(\lambda) \in C^\infty_0(\R)$; and because of \S\ref{sec:312}. 
We now extend \eqref{eq:1f} to $\Qq(x,\xi) = \psi\big( \Pp(x,\xi) \big)$.

\begin{lem}\label{lem:1a} If \eqref{eq:1f} holds then there exists $\lambda_2 \geq \lambda_0+2\epsilon$ and $\psi$ satisfying \eqref{eq:3k} such that for every $(x,\xi) \in \R^2 \times \R^2$,
\begin{equation}\label{eq:1i}
u \in \big[ \vp_1(\cdot,\xi), \dots, \vp_d(\cdot,\xi)\big]^\perp  \ \ \Rightarrow \ \ \blr{\left(\psi\big(\Pp(x,\xi)\big) - \lambda_+ \right)u,u}_{L^2(\Tt^2)} \geq |u|_{L^2(\Tt^2)}^2.
\end{equation}
\end{lem}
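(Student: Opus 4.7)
The strategy is to reduce the conclusion to a uniform transversality statement between the subspace $V(\xi) := [\vp_1(\cdot,\xi),\dots,\vp_d(\cdot,\xi)]$ and the low-energy spectral subspace of $\Pp(x,\xi)$, and then choose $\lambda_2$ large enough to absorb any residual low-energy contribution. The first step is an application of Courant--Fischer min-max: taking $V(\xi)$ as a $d$-dimensional test subspace, \eqref{eq:1f} implies that the $(d{+}1)$-th eigenvalue of $\Pp(x,\xi)$ is at least $\lambda_++3 > \lambda_0+2\epsilon$. Hence the spectral projector $\Pi_-(x,\xi) := \mathbf{1}_{(-\infty,\lambda_0+2\epsilon]}(\Pp(x,\xi))$ has rank at most $d$, and its range is separated from its orthogonal complement (with respect to $\Pp$) by a gap of size at least $\lambda_++3-\lambda_0-2\epsilon$.

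Next, I would show $V(\xi)^\perp \cap \mathrm{Range}(\Pi_-(x,\xi)) = \{0\}$: any unit vector $u$ in the intersection would satisfy simultaneously $\langle \Pp u,u\rangle \leq \lambda_0+2\epsilon$ (by functional calculus on $\Pi_-$) and $\langle \Pp u,u\rangle \geq \lambda_++3$ (by \eqref{eq:1f}), a contradiction. To make this quantitative, decompose any $u \in V(\xi)^\perp$ with $|u|=1$ as $u = u_- + u_+$ with $u_\pm = \Pi_\pm u$. Letting $P_{V(\xi)}$ denote the orthogonal projection onto $V(\xi)$, the condition $u \perp \vp_j$ gives $P_{V(\xi)} u_- = -P_{V(\xi)} u_+$, so that $|P_{V(\xi)} u_-| = |P_{V(\xi)} u_+| \leq |u_+|$. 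The transversality just established implies that $P_{V(\xi)}|_{\mathrm{Range}(\Pi_-(x,\xi))}$, being an injective map between finite-dimensional spaces, is bounded below by some positive constant $c(x,\xi)$; therefore $|u_-| \leq |u_+|/c(x,\xi)$ and $|u_+|^2 \geq c(x,\xi)^2/(1+c(x,\xi)^2)$.

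The main obstacle is to upgrade the pointwise positivity of $c(x,\xi)$ into a uniform lower bound $c(x,\xi) \geq c_0 > 0$. Here I would use that $\Pp(x,\xi) = \Pp_\pm(\xi)$ for $\pm x_2 \geq 2$ (see \S\ref{sec:24}) and that $\xi$ ranges over the compact torus $(\Tt^2)^*$, so the effective parameter space reduces to the compact set $[-2,2]\times (\Tt^2)^*$. The spectral gap obtained in the first paragraph allows $\Pi_-(x,\xi)$ to be represented by a Riesz contour integral around the at most $d$ low-lying eigenvalues, which depends continuously on $(x,\xi)$; continuity of the $\vp_j$'s in $\xi$ then makes $c(x,\xi)$ continuous, and compactness yields the uniform bound $c_0 > 0$.

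With $\beta_0 := c_0^2/(1+c_0^2) > 0$ fixed, the conclusion is immediate: since $\psi \geq \lambda_1$ on $\R$ and $\psi = \lambda_2$ on $[\lambda_0+2\epsilon,\infty)$, functional calculus gives
\begin{equation*}
\blr{\psi(\Pp(x,\xi)) u,u}_{L^2(\Tt^2)} \geq \lambda_1 |u_-|^2 + \lambda_2 |u_+|^2 \geq \lambda_1 + (\lambda_2-\lambda_1)\beta_0
\end{equation*}
for every $u \in V(\xi)^\perp$ with $|u|=1$. I would then pick $\lambda_2 \geq \lambda_0+2\epsilon$ large enough that $\lambda_1 + (\lambda_2-\lambda_1)\beta_0 \geq \lambda_+ + 1$, which establishes \eqref{eq:1i}.
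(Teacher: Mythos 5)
Your reduction to showing a uniform lower bound $|u_+|\geq\beta_0^{1/2}$ is a reasonable strategy, and Steps 1, 2, and the final functional-calculus step are fine. But the continuity argument in your third paragraph has a real gap. You claim that the spectral gap from Courant--Fischer makes $\Pi_-(x,\xi)=\mathbf 1_{(-\infty,\lambda_0+2\epsilon]}\big(\Pp(x,\xi)\big)$ continuous via a Riesz contour. What Courant--Fischer gives is $\lambda_{d+1}(x,\xi)\geq\lambda_++3$; it says nothing about where $\lambda_1,\dots,\lambda_d$ sit relative to the fixed level $\lambda_0+2\epsilon$. The hypothesis \eqref{eq:0a} guarantees a genuine spectral gap around $\lambda_0$ only for $P_\pm(\xi)$, i.e.\ for $|x_2|\geq 2$ (and one also has a gap for $|x_2|\leq 1$ where $\Pp(x,\xi)=(D_y+\xi)^2+|\lambda_0|+2$), but in the transition region $1<|x_2|<2$ nothing forbids eigenvalues of $\Pp(x,\xi)$ from crossing $\lambda_0+2\epsilon$. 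At such a crossing $\operatorname{rk}\Pi_-(x,\xi)$ jumps, $\Pi_-$ is discontinuous, and your transversality constant $c(x,\xi)$ need not be lower semicontinuous (it can drop when the rank of $\Pi_-$ increases). So the compactness step, as stated, does not give a uniform $c_0>0$.

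The paper sidesteps this by never invoking continuity of spectral projectors of $\Pp(x,\xi)$. It decomposes $u$ along the \emph{smoothly varying} pair $V(\xi)$, $V(\xi)^\perp$ (not along spectral subspaces), uses compactness in $(x,\xi)$ together with smoothness of the $\vp_j$ to bound $\|\Pp(x,\xi)|_{V(\xi)}\|\leq C$ uniformly, derives from \eqref{eq:1f} the quadratic-form estimate $\langle(\Pp-\lambda_+)u,u\rangle\geq|u|^2-3C^2|\Pi(\xi)u|^2$, and only then splits spectrally at the freely chosen level $\lambda_2=3C^2+1+\lambda_+$. Your approach is salvageable along similar lines without continuity of $\Pi_-$: for a unit vector $v\in\Range(\Pi_-(x,\xi))$, set $v_\perp=v-P_{V(\xi)}v\in V(\xi)^\perp$; then \eqref{eq:1f} gives $\langle\Pp v_\perp,v_\perp\rangle\geq(\lambda_++3)(1-|P_V v|^2)$, while expanding $\langle\Pp v_\perp,v_\perp\rangle=\langle\Pp v,v\rangle-2\Re\langle\Pp v,P_Vv\rangle+\langle\Pp P_Vv,P_Vv\rangle$ and using $\langle\Pp v,v\rangle\leq\lambda_0+2\epsilon$, $|\Pp v|\leq M'$ (uniform lower bound on $\Pp(x,\xi)$ by compactness), and $\|\Pp|_V\|\leq C$ yields an upper bound $\lambda_0+2\epsilon+2M'|P_Vv|+C|P_Vv|^2$; comparing the two forces $|P_Vv|\geq c_0>0$ uniformly. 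But as written, your appeal to continuity of $\Pi_-$ is unjustified and the compactness step fails.
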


\begin{proof} \textbf{1.}  We note that $\Qq(x,\xi) \in S^{(11)}(1)$. In particular, it satisfies the pseudoperiodic condition \eqref{eq:1w}. Moreover, $\Pp(x,\xi)$ depends on $x$ only if $x$ is within a compact set $K$. Therefore, it suffices to prove \eqref{eq:1i} for $(x,\xi) \in K \times [0,2\pi]^2$.

 Fix $(x,\xi) \in K \times [0,2\pi]^2$ and $u \in H^2(\Tt^2)$.  We split $u = u_1 + u_2$ where  $u_2 = \Pi(\xi)u \in L^2(\Tt^2)$ is the projection of $u$ to $\big[ \vp_1(\cdot,\xi), \dots, \vp_d(\cdot,\xi)\big]$. In particular $u_1 \in H^2(\Tt^2)$ satisfies the assumption of \eqref{eq:1f} and we have
\begin{equations}
\blr{ \big(\Pp(x,\xi) - \lambda_+ \big) u,u}_{L^2(\Tt^2)} = \sum_{j,k=1}^2
 \blr{ \big(\Pp(x,\xi) - \lambda_+ \big) u_j,u_k}_{L^2(\Tt^2)}
\\
\geq  3|u_1|^2_{L^2(\Tt^2)} - 2\left|\Pp(x,\xi) u_2\right|_{L^2(\Tt^2)} \cdot 3|u_1|_{L^2(\Tt^2)} - \left|\big(\Pp(x,\xi) - \lambda \big) u_2\right|_{L^2(\Tt^2)} \cdot |u_2|_{L^2(\Tt^2)}.
\end{equations}
The space $\big[ \vp_1(\cdot,\xi), \dots, \vp_d(\cdot,\xi)\big]$ is finite dimensional. There exists a constant $C \geq 1$ uniform in $(x, \xi) \in K \times [0,2\pi]^2$  such that 
\begin{equation}
\big| \Pp(x,\xi)  u_2\big|_{L^2(\Tt^2)} + \left|\big(\Pp(x,\xi) - \lambda \big) u_2\right|_{L^2(\Tt^2)}  \leq C |u_2|_{L^2(\Tt^2)}.
\end{equation}
We deduce that
\begin{equations}\label{eq:1j}
\blr{ \big(\Pp(x,\xi) - \lambda_+ \big) u,u}_{L^2(\Tt^2)} \geq  |u_1|^2_{L^2(\Tt^2)} - 2C^2 |u_2|^2_{L^2(\Tt^2)} 
\\
\geq |u|^2_{L^2(\Tt^2)} - 3C^2 |u_2|^2_{L^2(\Tt^2)} =  |u|^2_{L^2(\Tt^2)} - 3C^2 |\Pi(\xi) u|^2_{L^2(\Tt^2)}.
\end{equations}

\textbf{3.}  Fix $\lambda_2 =3C^2+1 + \lambda_+$. We split $u = u_-+u_+$ where $u_- = \1_{(-\infty,\lambda_2)}\big(\Pp(x,\xi)\big) u$ and $u_+ = \1_{[\lambda_2,\infty)}\big(\Pp(x,\xi)\big) u$. Note that by elliptic regularity, $u_- \in H^2(\Tt^2)$.  If $\psi$ satisfies \eqref{eq:3k}, then we have
\begin{equation}
\blr{\big(\psi\big(\Pp(x,\xi)\big) -\lambda_+\big) u,u}_{L^2(\Tt^2)} \geq \blr{\big(\Pp(x,\xi)-\lambda_+\big)u_-,u_-}_{L^2(\Tt^2)} + \big(\lambda_2-\lambda_+ \big) |u_+|^2.
\end{equation}
We obtain from \eqref{eq:1j}:
\begin{equations}
\blr{\big(\psi\big(\Pp(x,\xi)\big) -\lambda_+\big) u,u}_{L^2(\Tt^2)} \geq |u_-|^2_{L^2(\Tt^2)} - 3C^2 |\Pi(\xi) u_-|^2_{L^2(\Tt^2)} + \big(\lambda_2 - \lambda_+ \big) |u_+|^2_{L^2(\Tt^2)}
\\
 \geq |u|^2_{L^2(\Tt^2)} - 3C^2 |\Pi(\xi) u|^2_{L^2(\Tt^2)} + \big(\lambda_2 - \lambda_+ -1 \big) |u_+|^2_{L^2(\Tt^2)} - 3C^2 |\Pi(\xi) u_+|^2_{L^2(\Tt^2)}.
\end{equations}
Since $\lambda_2 = 3C^2+1 + \lambda_+$, we obtain 
\begin{equations}
\blr{\big(\psi\big(\Pp(x,\xi)\big) -\lambda_+\big) u,u}_{L^2(\Tt^2)} \geq |u|^2_{L^2(\Tt^2)} - 3C^2 |\Pi(\xi) u|^2_{L^2(\Tt^2)}.
\end{equations}
This completes the proof: $\Pi(\xi) u = 0$ if $u$ satisfies the condition of \eqref{eq:1i}. \end{proof}

In the rest of the paper we assume given $\vp_1, \dots \vp_d$ satisfying \eqref{eq:1k} and \eqref{eq:1f}; and we fix $\psi$ such that \eqref{eq:1i} holds. Introduce
\begin{equations}\label{eq:1r}
R_{12}(\xi) \de \sum_j t_j \cdot \vp_j(y,\xi); \ \ \ \ \big(R_{21}(\xi)u\big)_j \de \lr{\vp_j(\cdot,\xi),u}_{L^2(\Tt^2)}.
\end{equations}
The symbols $R_{12}(\xi)$ and $R_{21}(\xi)$ are respectively in $S^{(12)}(1)$ and $S^{(21)}(1)$.

A general argument based on \eqref{eq:1i} -- see e.g. \cite[Appendix 13.A]{DS:99} -- implies that
\begin{equation}\label{eq:1l}
\matrice{\Qq(x,\xi) - \lambda & R_{12}(\xi) \\ R_{21}(\xi) & 0} \ : \  L^2(\Tt^2) \oplus \C^d \ \rightarrow \ L^2(\Tt^2) \oplus \C^d
\end{equation}
is invertible for all $(x,\xi) \in \R^2 \times \R^2$ and $\lambda \in \Dd(0,|\lambda_+|)$. Note that this disk contains $\ove{\Omega}$.

The operator
\begin{equation}\label{eq:0b}
\matrice{\Qq_h - \lambda & R_{12} \\ R_{21} & 0} \ : \ \HH_1 \oplus \HH_2 \ \rightarrow \ \HH_1 \oplus \HH_2
\end{equation}
is a bloc-by-bloc semiclassical operator, with blocs in $\Psi_h^{(jk)}(1)$. Its leading symbol is \eqref{eq:1l}, which is invertible. Hence \eqref{eq:0b} is invertible; and its inverse is a semiclassical operator acting on the same space. We write it in the form
\begin{equation}
\matrice{\Qq_h - \lambda & R_{12} \\ R_{21} & 0}^{-1} = \matrice{E_{11}(\lambda) & E_{21}(\lambda) \\ E_{21}(\lambda) & E_{22}(\lambda)}, \ \ \ \ \text{where} \ \ E_{jk}(\lambda) \in \Psi_h^{(jk)}(1).
\end{equation}

\subsection{Reduction}\label{sec:33} We combine the Helffer--Sj\"ostrand formula with the effective Hamiltonian expression \eqref{eq:3x} for $(\Qq_h-\lambda)^{-1}$. This gives
\begin{equations}
g'(\Qq_h) = \int_{\C^+} \dd{^2\tg(\lambda,\olambda)}{\olambda \p\lambda} \cdot (\Qq_h-\lambda)^{-1} \cdot \dfrac{d^2\lambda}{\pi}
\\
= \int_{\C^+} \dd{^2\tg(\lambda,\olambda)}{\olambda \p\lambda} \cdot \left( E_{11}(\lambda) - E_{12}(\lambda) E_{22}(\lambda)^{-1} E_{21}(\lambda) \right) \cdot \dfrac{d^2\lambda}{\pi}
\end{equations}
This integral splits in two parts, one of them involving $E_{11}(\lambda)$. This term is holomorphic in $\Omega$, which is a neighborhood of $\supp(\tg)$ in $\C^+$. An integration by parts with respect to $\olambda$ removes $E_{11}(\lambda)$ and we end up with:
 \begin{equations}\label{eq:3s}
g'(\Qq_h) = - \int_{\C^+} \dd{^2\tg(\lambda,\olambda)}{\olambda \p\lambda} \cdot E_{12}(\lambda) E_{22}(\lambda)^{-1} E_{21}(\lambda)  \cdot \dfrac{d^2\lambda}{\pi}.
\end{equations}

Let $\Phi_0(x_1) \in C_0^\infty(\R)$ such that $\Phi_0(x_1) = 1$ on $[-1,1]$; define $\Phi(x) = \Phi_0(x_1) \Phi_0(x_2)$. We insert $\Phi$ in \eqref{eq:3s} to write $\JJ_e(P_-,P_+) = \Tr_{\HH_1} (\TT_\Phi + \TT_{1-\Phi})$, where 
\begin{equations}\label{eq:3o}
\TT_\Phi \de   - \int_{\C^+} \dd{^2\tg(\lambda,\olambda)}{\olambda \p\lambda} \cdot \big[ \Qq_h, f(x_1)\big] \cdot E_{12}(\lambda) E_{22}(\lambda)^{-1} \Phi(x) E_{21}(\lambda)  \cdot \dfrac{d^2\lambda}{\pi}.
\end{equations}

\begin{lem}\label{lem:1k} The operator $\TT_{1-\Phi}$ is trace-class on $\HH_1$ and  $\| \TT_{1-\Phi} \|_\Tr = O(h^\infty)$.
\end{lem}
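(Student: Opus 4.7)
The plan is to decompose $1-\Phi(x) = (1-\Phi_0(x_1)) + \Phi_0(x_1)(1-\Phi_0(x_2))$ and write $\TT_{1-\Phi} = \TT^A + \TT^B$ accordingly, choosing $\Phi_0 \in C^\infty_0(\R)$ so that $\Phi_0 \equiv 1$ on an interval containing both $\supp f' \subset [-\ell,\ell]$ and $[-2,2]$. Each piece will be controlled by a distinct semiclassical mechanism: pseudolocality (disjoint $x_1$-supports) for $\TT^A$, and holomorphicity of the bulk effective Hamiltonian combined with Stokes' theorem for $\TT^B$.

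\textbf{Analysis of $\TT^A$.} Since $\Qq(x_2,\xi)$ does not depend on $x_1$, the semiclassical symbol of $[\Qq_h, f(x_1)]$ is an asymptotic series of terms of the form $\partial^k_{\xi_1}\Qq(x_2,\xi) \cdot f^{(k)}(x_1)$, each supported in $x_1 \in \supp f'$. The factor $(1-\Phi_0(x_1))$ is supported away from $\supp f'$. The Moyal $\#$-product of two symbols with disjoint $x_1$-supports vanishes to all orders in the asymptotic expansion, and its $O(h^\infty)$ remainder survives composition with the uniformly bounded factors $E_{12}, E_{22}^{-1}, E_{21}$ (polynomial in $|\Im\lambda|^{-1}$) and the $\lambda$-integration against $\partial_\olambda\partial_\lambda\tg = O(|\Im\lambda|^\infty)$.

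\textbf{Analysis of $\TT^B$.} By \eqref{eq:5w}, $\Pp(x_2,\xi) = \Pp_\pm(\xi)$ whenever $\pm x_2 \geq 2$, so the effective Hamiltonian symbols reduce there to bulk values $e_{jk,\pm}(\xi;\lambda)$ that depend holomorphically on $\lambda \in \Omega$: indeed $\lambda_0$ lies in a spectral gap of $\Qq_\pm(\xi)$ uniformly in $\xi$, so $e_{22,\pm}(\xi;\lambda)^{-1} = -R_{21}(\xi)(\Qq_\pm(\xi) - \lambda)^{-1} R_{12}(\xi)$ exists and is holomorphic on $\Omega$. Because $\Phi_0 \equiv 1$ on $[-2,2]$, the function $(1-\Phi_0(x_2))$ and all its $x_2$-derivatives vanish on $|x_2|\leq 2$. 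Hence each coefficient $F_k(x,\xi;\lambda)$ in the $\#$-expansion of the symbol of $E_{12}E_{22}^{-1}[\Phi_0(x_1)(1-\Phi_0(x_2))]E_{21}$ either vanishes pointwise (where $|x_2|<2$) or depends holomorphically on $\lambda \in \Omega$ (where $|x_2|\geq 2$, since it is built only from bulk symbols and derivatives of $\Phi_0$). Stokes' theorem applied to the almost analytic extension $\tg$, compactly supported in $\Omega$ and vanishing on $\partial\Omega$, yields $\int_{\C^+}\partial_\olambda\partial_\lambda\tg\cdot F_k\,d^2\lambda = 0$ pointwise in $(x,\xi)$ for every $k$. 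The $O(h^N)$ remainder in the $\#$-expansion, combined with $\partial_\olambda\partial_\lambda\tg = O(|\Im\lambda|^\infty)$, gives an $O(h^N)$ bound on the integrated symbol; as this holds for every $N$, we obtain $O(h^\infty)$.

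\textbf{Upgrade to trace norm.} The main technical obstacle is promoting these symbolic bounds to trace-norm bounds on $\HH_1$. Both pieces yield symbols with compact $x_1$-support (from the commutator, or from $\Phi_0(x_1)$) but only bounded $x_2$-behavior a priori. I would close this gap with a further partition $1 = \chi_+(x_2) + \chi_0(x_2) + \chi_-(x_2)$ adapted to the bulk/interface regions, with $\chi_\pm$ supported in $\pm x_2 \geq 2$: on $\supp\chi_0$ the symbol is already compactly supported in $x_2$, while on $\supp\chi_\pm$ the symbols $e_{jk}$ are bulk and a second application of the Stokes argument of Piece $\TT^B$ contributes $O(h^\infty)$. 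The resulting symbol then lies in $S^{(11)}(\chi(x))$ with $\chi$ of compact support in $x$, and the trace-norm bound follows from the Dimassi-type trace formula for $\Psi^{(11)}_h$ operators on $\HH_1$.
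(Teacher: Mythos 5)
Your decomposition $1-\Phi=(1-\Phi_0(x_1))+\Phi_0(x_1)(1-\Phi_0(x_2))$ is a genuinely different starting point from the paper's, and the disjoint-$x_1$-support argument for $\TT^A$ and the holomorphicity-plus-Stokes argument for $\TT^B$ are both sensible mechanisms. However, the proposal has a real gap precisely at the point you flag as "the main technical obstacle," and the fix you sketch does not close it. The symbolic $\#$-expansion arguments — both the vanishing of the expansion terms from disjoint $x_1$-supports (piece $\TT^A$) and the killing of holomorphic terms by Stokes (piece $\TT^B$) — control the symbol of the composed operator only in $S^{(22)}(1)$ (with seminorms $O(h^N)$, uniformly on each $|\Im\lambda|\geq\delta$). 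An $O(h^\infty)$ symbol in $S^{(22)}(1)$ gives $O(h^\infty)$ \emph{operator-norm} bounds, but it does not make the operator trace-class on $\HH_2$, let alone give $O(h^\infty)$ \emph{trace}-norm bounds: Dimassi's trace criterion (\eqref{eq:0f}) requires $L^1$ decay of the symbol in $x$, and none of your factors supply decay in $x_2$. In particular your $\TT^B$, which carries the factor $(1-\Phi_0(x_2))$ supported in $|x_2|\geq R$ with no decay, is not a priori trace-class, so "Stokes gives $\int F_k=0$ pointwise" cannot be promoted to a statement about the trace. The further $\chi_\pm(x_2)$ partition suffers from the same defect: the $\#$-expansion remainder of $\chi_\pm E_{22}^{-1}$ is a genuine $O(h^N)$ error in $\Psi_h^{(22)}(1)$ (with polynomial $|\Im\lambda|^{-1}$ blowup from the non-analytic part of $E_{22}^{-1}$), and there is no mechanism in your argument that localizes it in $x_2$.

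What the paper's proof supplies, and your proposal misses, is an \emph{operator-level} decomposition producing the $x_2$-decay: the exact resolvent identity
\begin{equation}
(\Qq_h-\lambda)^{-1}=\sum_\pm\chi_\pm(x_2)(\Qq_{h,\pm}-\lambda)^{-1}+\sum_\pm\chi_\pm(x_2)(\Qq_{h,\pm}-\lambda)^{-1}(\Qq_h-\Qq_{h,\pm})(\Qq_h-\lambda)^{-1},
\end{equation}
which yields $E_{22}(\lambda)^{-1}=T_1(\lambda)+T_2(\lambda)$ with $T_1$ holomorphic and bounded on $\Omega'$ (because $\Qq_{h,\pm}$ have gaps), and $T_2\in\Psi_h^{(22)}\big(\lr{x_2}^{-\infty}\big)$ because $\Qq_h-\Qq_{h,\pm}$ has coefficients supported in $|x_2|\leq 1$ opposing the support of $\chi_\pm$. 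This $\lr{x_2}^{-\infty}$ decay, combined with $[\Qq_h,f]\in\Psi_h^{(11)}(\lr{x_1}^{-\infty})$, is exactly what places the relevant operator in $\Psi_h^{(22)}(\lr{x}^{-\infty})$, giving trace-class; the three-way disjointness of supports (commutator in $|x_1|\leq 1$, $T_2$-symbol in $|x_2|\leq 1$, $1-\Phi$ outside $[-1,1]^2$) then gives the $O(h^\infty)$ bound, all in a trace-class symbol class where the estimate actually controls $\|\cdot\|_\Tr$. If you want to keep your decomposition of $1-\Phi$, you still need some version of this $T_1+T_2$ splitting to produce the $x_2$-decay before any Moyal or Stokes argument can be upgraded to trace norm.
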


\begin{proof} \textbf{1.}  Let $\chi_\pm(x_2) \in C^\infty(\R)$ satisfying \eqref{eq:4a}.  We define $\Qq_{h,\pm} = \psi(\Pp_{h,\pm})$, where $\Pp_{h,\pm}$ are the asymptotic Hamiltonians of $\Pp_h$:
\begin{equation}
\Pp_{h,\pm} \de \sum_{|\az| \leq 2} a_{\az,\pm}(y) (D_x+hD_y)^\az.
\end{equation}
Then we have
\begin{equation}
(\Qq_h-\lambda)^{-1} = \sum_{\pm} \chi_\pm(x_2)(\Qq_{h,\pm}-\lambda)^{-1} + \sum_{\pm} \chi_\pm(x_2) (\Qq_{h,\pm}-\lambda)^{-1} (\Qq_h-\Qq_{h,\pm}) (\Qq_h-\lambda)^{-1}.
\end{equation}
The term $(\Qq_{h,\pm}-\lambda)^{-1}$ is analytic for $\lambda \in \Omega'$. Recall that $\psi(\Pp_h) = \lambda_2 + \vp(\Pp_h)$, where $\vp(\lambda) \in C_0^\infty(\R)$. Arguing as in Steps 3-4 in the proof of Lemma \ref{lem:1c}, we see that
\begin{equation}
\Qq_h-\Qq_{h,\pm} = \vp(\Pp_h) - \vp(\Pp_{h,\pm}) 
\in \Psi_h^{(11)}\left( m_{2,\mp}^{\infty}  \right). 
\end{equation}
Moreover $\chi_\pm(x_2) \in \Psi_h^{(11)}(m_{2,\pm}^{\infty})$. We deduce that $(\Qq_h-\lambda)^{-1}$ is a sum of two terms: the first one is analytic in $\lambda \in \Omega'$; the second one belongs to $\Psi_h^{(11)} ( \lr{x_2}^{-\infty} )$, with bounds blowing up at worst polynomially in $|\Im \lambda|^{-1}$ .

\textbf{2.}  We recall that $E_{22}(\lambda)^{-1} = -R_{21} (\Qq_h-\lambda)^{-1} R_{12}$. Hence $E_{22}(\lambda)^{-1} = T_1(\lambda) + T_2(\lambda)$, with $T_1(\lambda)$ is analytic in $\lambda \in \Omega'$; and $T_2(\lambda)$ in $\Psi_h^{(22)} \left( \lr{x_2}^{-\infty}  \right)$: 
\begin{equations}\label{eq:4d}
T_1(\lambda) \de -\sum_{\pm} R_{21} \cdot \chi_\pm(x_2)(\Qq_{h,\pm}-\lambda)^{-1} \cdot R_{12},
\\
T_2(\lambda) \de -\sum_{\pm} R_{21} \cdot \chi_\pm(x_2) (\Qq_{h,\pm}-\lambda)^{-1} (\Qq_h-\Qq_{h,\pm}) (\Qq_h-\lambda)^{-1}\cdot R_{12}.
\end{equations}

Pairings of analytic terms with almost-analytic functions vanish. Therefore, 
\begin{equations}\label{eq:0d}
\TT_{1-\Phi} =
\int_{\C^+} \dd{^2\tg(\lambda,\olambda)}{\olambda \p\lambda} \cdot \big[ \Qq_h, f(x_1)\big] \cdot  E_{12}(\lambda) T_2(\lambda) \big( 1-\Phi(x) \big) E_{21}(\lambda)  \cdot \dfrac{d^2\lambda}{\pi}.
\end{equations}

\textbf{3.}  As in \eqref{eq:2g}, $[\Qq_h,f(x_1)] \in \Psi_h^{(11)}\big(\lr{x_1}^{-\infty}\big)$. We deduce that for any $s \in \N$,
\begin{equation}\label{eq:0c}
\hspace*{-2mm} \big[\Qq_h, f(x_1)\big]   E_{12}(\lambda) = \big[\Qq_h, f(x_1)\big]  E_{12}(\lambda) \lr{x_1}^s \cdot \lr{x_1}^{-s} \ \in \ \Psi_h^{(12)}(1) \cdot \Psi_h^{(22)}\big(\lr{x_1}^{-s}\big),
\end{equation}
with uniform bounds as $|\Im \lambda| \rightarrow 0$. 

As in \eqref{eq:2h}, $T_2(\lambda) \in \Psi_h^{(22)}\big(\lr{x_2}^{-\infty}\big)$, with bounds blowing up polynomially with $|\Im \lambda|^{-1}$. Combining with \eqref{eq:0c}, we deduce that for any $s \in \N$,
\begin{equation}\label{eq:0e}
\big[ \Qq_h, f(x_1)\big] \cdot  E_{12}(\lambda) T_2(\lambda) \big( 1-\Phi(x) \big) \ \in \ \Psi^{(12)}_h(1) \cdot \Psi_h^{(22)}\big(\lr{x}^{-s}\big),
\end{equation}
with bounds blowing up polynomially with $|\Im \lambda|^{-1}$. For $s \geq 3$, operators in $\Psi_h^{(22)}\big(\lr{x}^{-s}\big)$ are trace-class on $\HH_2$. Moreover, $\p_\olambda \tg(\lambda,\olambda) = O(|\Im \lambda|^\infty)$. We deduce from \eqref{eq:0d} that $\TT_{1-\Phi}$ is trace class on $\HH_1$.

\textbf{4.}  We take the trace of \eqref{eq:0d} and permute trace and integral. The identity \eqref{eq:0e} allows us to move cyclically  $E_{21}(\lambda)$ from the right to the left. We obtain
\begin{equation}
\Tr_{\HH_1}\big(\TT_{1-\Phi}\big) =
\int_{\C^+} \dd{^2\tg(\lambda,\olambda)}{\olambda \p\lambda} \cdot \Tr_{\HH_2}\Big(E_{21}(\lambda)\big[ \Qq_h, f(x_1)\big] \cdot  E_{12}(\lambda) T_2(\lambda) \big( 1-\Phi(x) \big)  \Big) \cdot \dfrac{d^2\lambda}{\pi}.
\end{equation}
We show that this trace is $O(h^\infty)$ using \eqref{eq:0f}: we prove that the symbol is $O(h^\infty)$.

Fix $N \in \N$. Recall that $f'$ has support in $[-1,1]$. We use
\begin{equation}\label{eq:3q}
[\Qq_h,f(x_1)] = [\vp(\Pp_h),f(x_1)] = \big( 1 -f(x_1) \big)\vp(\Pp_h)f(x_1) - f(x_1) \vp(\Pp_h) \big(1-f(x_1)\big)
\end{equation}
with the composition theorem. This shows that there exists $a_N \in S^{(11)}\big(\lr{x_1}^{-\infty}  \big)$, with support in $[-1, 1] \times \R^3$, such that 
\begin{equation}\label{eq:3t}
\big[\Qq_h,f(x_1)\big] = \Op_h(a_N) + h^N \cdot \Psi_h^{(11)} \left( \lr{x_1}^{-\infty}  \right).
\end{equation}
Via a similar argument, there exists $b_N(\cdot;\lambda) \in S^{(22)}(\lr{x_2}^{-\infty})$, with support in $\R \times [-1, 1] \times \R^2$ and seminorms blowing up at worst polynomially with $|\Im \lambda|^{-1}$,  with
\begin{equation}\label{eq:3u}
T_2(\lambda) = \Op_h\big(b_N(\cdot;\lambda)\big) + h^N \cdot \Psi_h^{(22)} \left( \lr{x_2}^{-\infty} \right).
\end{equation}
It uses the Helffer--Sj\"ostrand formula for $\Qq_h - \Qq_{h,+} = \vp(\Pp_h) - \vp(\Pp_{h,+})$; $\supp \chi_+ \subset [-1,+\infty)$; and that the coefficients of   $\Pp_h-\Pp_{h,+}$ have support in $\R \times (-\infty,1]$.  

We note that the (three-way) intersection of the supports of $a_N, b_N$ and $1-\Phi$ is empty. Using \eqref{eq:3t}, \eqref{eq:3u} and the composition theorem, we deduce that
\begin{equation}\label{eq:3v}
E_{21}(\lambda) \big[ \Qq_h, f(x_1)\big] \cdot  E_{12}(\lambda) T_2(\lambda) \big( 1-\Phi(x) \big)  \ \in \ h^N \cdot \Psi_h^{(22)} \left( \lr{x}^{-\infty}   \right)
\end{equation}
with symbolic bounds blowing up polynomially with $|\Im \lambda|^{-1}$. In particular the $\HH_2$-trace of \eqref{eq:3v} is $O(h^N |\Im\lambda|^{-\az_N})$ for some $\az_N> 0$. This completes the proof because $\p^2_{\olambda\lambda} \tg(\lambda,\olambda) = O(|\Im \lambda|^\infty)$.
\end{proof}

Applying Lemma \ref{lem:1k}, we split
\begin{equation}
[\Qq_h,f(x_1)]g'(\Qq_h) = \TT_\Phi + \TT_{1-\Phi}
\end{equation}
where both $\TT_\Phi$ and $\TT_{1-\Phi}$ are trace-class; it proves that $\JJ_e(P_-,P_+) = \Tr_{\HH_1}(\TT_\Phi) + O(h^\infty)$. 

The operator $\TT_\Phi$ is an integral involving $\p^2_{\olambda\lambda} \tg(\lambda,\olambda)$ and
\begin{equation}\label{eq:3p}
\big[\Qq_h,f(x_1)\big]E_{12}(\lambda) E_{22}(\lambda)^{-1} \Phi(x) E_{21}(\lambda).
\end{equation}
We observe that $\Phi(x) \in \Psi_h^{(22)} \left( \lr{x}^{-\infty}  \right)$, thus it is trace-class on $\HH_2$. The other operators in \eqref{eq:3p} are bounded with bounds blowing up polynomially with $|\Im \lambda|^{-1}$. Since $\p^2_{\olambda\lambda} \tg(\lambda,\olambda) = O(|\Im \lambda|^\infty)$, we can permute trace and integral in $\Tr_{\HH_1}(\TT_\Phi)$. Thus,
\begin{equation}
\Tr_{\HH_1}(\TT_\Phi) = 
 - \int_{\C^+} \dd{^2\tg(\lambda,\olambda)}{\olambda \p\lambda} \cdot \Tr_{\HH_1} \Big( \big[\Qq_h,f(x_1)\big] \cdot E_{12}(\lambda) E_{22}(\lambda)^{-1} \Phi(x) E_{21}(\lambda) \Big) \cdot \dfrac{d^2\lambda}{\pi}.
\end{equation} 
We move the term $E_{21}(\lambda)$ cyclically and end up with
\begin{equation}\label{eq:4h}
 - \int_{\C^+} \dd{^2\tg(\lambda,\olambda)}{\olambda \p\lambda} \cdot \Tr_{\HH_2} \Big( E_{21}(\lambda) \big[\Qq_h,f(x_1)\big]E_{12}(\lambda) \cdot E_{22}(\lambda)^{-1} \Phi(x) \Big) \cdot \dfrac{d^2\lambda}{\pi}.
\end{equation} 
To summarize, $\JJ_e(P_-,P_+)$ equals \eqref{eq:4h} modulo $O(h^\infty)$. In a sense, the next result extends the definition \eqref{eq:5u}  of $\JJ_e(P)$ to singular value problems:

\begin{thm}\label{thm:1} We have
\begin{equations}
\JJ_e(P_-,P_+)
 =
 \int_{\C^+} \dd{^2\tg(\lambda,\olambda)}{\olambda \p\lambda} \cdot \Tr_{\HH_2} \Big( \big[E_{22}(\lambda),f(x_1)\big] E_{22}(\lambda)^{-1} \Phi(x) \Big) \cdot \dfrac{d^2 \lambda}{\pi} + O(h^\infty).
\end{equations} 
\end{thm}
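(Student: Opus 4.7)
My plan is to deduce the theorem from the identity just before its statement,
\[
\JJ_e(P_-,P_+) = -\int_{\C^+} \p_\olambda\p_\lambda\tg(\lambda,\olambda) \cdot \Tr_{\HH_2}\bigl(E_{21}(\lambda)[\Qq_h,f(x_1)]E_{12}(\lambda)\cdot E_{22}(\lambda)^{-1}\Phi\bigr) \frac{d^2\lambda}{\pi} + O(h^\infty),
\]
by replacing $-E_{21}[\Qq_h,f]E_{12}$ with $[E_{22},f]$ via a commutator identity for the block Grushin inverse.

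Set $\MM = \matrice{\Qq_h - \lambda & R_{12} \\ R_{21} & 0}$, so $\EE \de \MM^{-1} = \matrice{E_{11} & E_{12} \\ E_{21} & E_{22}}$, and let $F = \matrice{f(x_1) & 0 \\ 0 & f(x_1)}$. Differentiating $\EE\MM = I$ by commutation with $F$ yields $[\EE,F] = -\EE[\MM,F]\EE$; the $(2,2)$-block of this identity reads
\[
[E_{22}, f(x_1)] = -E_{21}[\Qq_h,f(x_1)]E_{12} - E_{22}[R_{21},f(x_1)]E_{12} - E_{21}[R_{12},f(x_1)]E_{22}.
\]
The last two summands do not vanish because $R_{12}(\xi), R_{21}(\xi)$ depend on $x$ only through Moyal corrections; the composition theorem gives $[R_{ij},f(x_1)] \in h\,\Psi_h^{(ij)}\bigl(\lr{x_1}^{-\infty}\bigr)$, with leading symbol $(h/i)\,\p_{\xi_1}R_{ij}(\xi)\, f'(x_1)$. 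Multiplying by $E_{22}^{-1}\Phi$ and substituting into the formula for $\JJ_e(P_-,P_+)$, the theorem reduces to showing
\[
\int_{\C^+} \p_\olambda\p_\lambda\tg \cdot \Tr_{\HH_2}\Bigl( E_{22}[R_{21},f]E_{12}E_{22}^{-1}\Phi + E_{21}[R_{12},f]\Phi \Bigr) \frac{d^2\lambda}{\pi} = O(h^\infty).
\]

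The second summand vanishes exactly: $E_{21}(\lambda)[R_{12},f]\Phi$ is analytic in $\lambda \in \Omega$ (since $E_{21}$ is analytic there and the remaining factors are $\lambda$-independent), so the trace $G(\lambda)$ satisfies $\p_\olambda G = 0$; then $\int \p_\olambda\p_\lambda\tg \cdot G\,d^2\lambda = \int \p_\olambda(\p_\lambda\tg \cdot G)\,d^2\lambda$, which equals zero by Stokes' theorem since $\tg$ is compactly supported inside $\Omega$.

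The main obstacle is the first summand, because $E_{22}^{-1}$ is only meromorphic in $\lambda$, with poles at the spectral values of the effective Hamiltonian inside $\Omega$, so direct analyticity fails. My plan is to work at the level of Moyal symbols: the exact trace formula \eqref{eq:0f} identifies the $\HH_2$-trace with the phase-space integral of the symbol modulo $O(h^\infty)$, and the cyclic-trace identity for Moyal products, $\int \Tr_{\C^d}(a\#b)\,dxd\xi \equiv \int \Tr_{\C^d}(b\#a)\,dxd\xi$ mod $O(h^\infty)$, allows $E_{22}^{-1}$ to be cyclically commuted next to $E_{22}$ and cancelled order-by-order in the semiclassical expansion. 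What survives at each order in $h$ is a product built from $[R_{21},f]$, $E_{12}$, $\Phi$ and their Moyal derivatives, all analytic in $\lambda \in \Omega$; Stokes' theorem as above annihilates the $\lambda$-integral at every order, yielding $O(h^\infty)$ in total.
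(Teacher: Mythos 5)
Your reduction via the block-commutator identity $[E_{22},f]=-E_{21}[\Qq_h,f]E_{12}-E_{22}[R_{21},f]E_{12}-E_{21}[R_{12},f]E_{22}$ is exactly the paper's Step 1, and your treatment of the term $E_{21}[R_{12},f]\Phi$ by analyticity and Stokes is the paper's Step 2. The gap is in the remaining trace
\[
\int_{\C^+}\p_\olambda\p_\lambda\tg\cdot \Tr_{\HH_2}\bigl(E_{22}[R_{21},f]E_{12}E_{22}^{-1}\Phi\bigr)\,\frac{d^2\lambda}{\pi},
\]
which you propose to kill by cyclicity of the Moyal trace alone. That does not close. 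Cyclically moving $E_{22}$ to the right and commuting it past $\Phi$ produces $\Tr\bigl([R_{21},f]E_{12}\Phi\bigr)+\Tr\bigl([R_{21},f]E_{12}\,E_{22}^{-1}[\Phi,E_{22}]\bigr)$; the first term is analytic and vanishes, but the second still contains $E_{22}^{-1}$, which is only \emph{meromorphic} on $\Omega$. Your claim that ``what survives at each order in $h$ is a product built from $[R_{21},f]$, $E_{12}$, $\Phi$ and their Moyal derivatives, all analytic in $\lambda$'' is false: every term of the asymptotic expansion of $E_{22}^{-1}\#[\Phi,E_{22}]$ carries a factor of the pointwise inverse $E^{-1}(x,\xi;\lambda)$, which has poles in $\Omega$. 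Moreover, the commutator gain is a single power of $h$ — there is no ``order-by-order cancellation'' that iterates to $O(h^\infty)$.

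What the paper does here (Steps 3--5), and what your proposal is missing, is a \emph{spatial localization} argument. One first splits $E_{22}(\lambda)^{-1}=T_1(\lambda)+T_2(\lambda)$ as in \eqref{eq:4d}, where $T_1$ is analytic in $\Omega$ (hence killed by Stokes) and $T_2\in\Psi^{(22)}_h(\lr{x_2}^{-\infty})$ has symbol supported, modulo $O(h^\infty)$, in $\{|x_2|\leq 1\}$. The remaining trace $\Tr_{\HH_2}\bigl([R_{21},f]E_{12}\,T_2(\lambda)\,[\Phi,E_{22}]\bigr)$ is then $O(h^\infty)$ because the three factors have pairwise-chained disjoint supports: $[R_{21},f]E_{12}$ lives in $\{|x_1|\leq 1\}$, $T_2$ in $\{|x_2|\leq 1\}$, and $[\Phi,E_{22}]$ outside $[-1,1]^2$ (since $\Phi\equiv 1$ there). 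Note that the supports of the first and third factors alone do overlap (on $\{|x_1|\leq 1,\,|x_2|>1\}$); it is precisely the $|x_2|\leq 1$ support constraint on $T_2$ that makes the triple Moyal product vanish to all orders. Without the $T_1+T_2$ decomposition and this three-way support argument, the estimate $O(h^\infty)$ cannot be reached.
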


\begin{proof} \textbf{1.}  The starting point is \eqref{eq:4h}. We use the matrix identity \eqref{eq:1m}  for the $(1,2)$ and $(2,1)$ components. It yields
\begin{equations}
E_{21}(\lambda) \big[\Qq_h,f(x_1)\big]E_{12}(\lambda) = E_{21}(\lambda) (\Qq_h-\lambda) f(x_1) E_{12}(\lambda) - E_{21}(\lambda) f(x_1) (\Qq_h-\lambda) E_{12}(\lambda)
\\
 = -E_{22}(\lambda) R_{21} f(x_1) E_{12}(\lambda) + E_{21}(\lambda) f(x_1) R_{12} E_{22}(\lambda).
\end{equations}
Then we use \eqref{eq:1m} for the $(2,2)$ component. %\tr{need to clarify what I mean by $[R_{21},f(x_1)]$}
 This gives
\begin{equations}
E_{21}(\lambda) \big[\Qq_h,f(x_1)\big]E_{12}(\lambda) 
\\
-E_{22}(\lambda)  f(x_1)  +  f(x_1)  E_{22}(\lambda) -E_{22}(\lambda) \big[ R_{21}, f(x_1)\big] E_{12}(\lambda) + E_{21}(\lambda) \big[f(x_1), R_{12}\big] E_{22}(\lambda)
\\
=
\big[f(x_1), E_{22}(\lambda)\big] -E_{22}(\lambda) \big[ R_{21}, f(x_1)\big] E_{12}(\lambda) + E_{21}(\lambda) \big[f(x_1), R_{12}\big] E_{22}(\lambda).
\end{equations}
We multiply on both sides by $E_{22}(\lambda)^{-1} \Phi(x)$ to end up with
\begin{equations}\label{eq:4b}
E_{21}(\lambda) \big[\Qq_h,f(x_1)\big]E_{12}(\lambda) E_{22}(\lambda)^{-1} \Phi(x) 
=
\big[f(x_1), E_{22}(\lambda)\big] E_{22}(\lambda)^{-1} \Phi(x)
 \\
-
E_{22}(\lambda) \big[ R_{21}, f(x_1)\big] E_{12}(\lambda) E_{22}(\lambda)^{-1} \Phi(x) + E_{21}(\lambda) \big[f(x_1), R_{12}\big] \Phi(x).
\end{equations}

\textbf{2.}  The function $\Phi$ has compact support. Therefore it induces a trace-class operator on $\HH_2$. This allows us to separately trace each term in \eqref{eq:4b}. The third trace is
\begin{equation}\label{eq:4i}
\Tr_{\HH_2} \Big( E_{21}(\lambda) \big[f(x_1), R_{12}\big] \Phi(x) \Big).
\end{equation}
It is analytic for $\lambda \in \Omega$; an integration by parts with respect to $\olambda$  gets rid of it.
 
\textbf{3.}  We focus on the second trace:
\begin{equation}\label{eq:4c}
\Tr_{\HH_2} \Big( E_{22}(\lambda) \big[ R_{21}, f(x_1)\big] E_{12}(\lambda) E_{22}(\lambda)^{-1} \Phi(x) \Big).
\end{equation}
We move $E_{22}(\lambda)$ cyclically to the right and commute it with $\Phi(x)$. The term $E_{22}(\lambda)^{-1}$ cancels out with $E_{22}(\lambda)$, producing an analytic term. Only the commutator produces non-analytic terms. In other words,  \eqref{eq:4c} equals 
\begin{equation}
\Tr_{\HH_2} \Big(  \big[ R_{21}, f(x_1)\big] E_{12}(\lambda) E_{22}(\lambda)^{-1} \big[\Phi(x), E_{22}(\lambda)\big]\Big),
\end{equation}
modulo an analytic function. 

We recall that $E_{22}(\lambda)^{-1}$ splits as $T_1(\lambda) + T_2(\lambda)$, where $T_1(\lambda)$, $T_2(\lambda)$ are defined in \eqref{eq:4d}. Since  $T_1(\lambda)$ is analytic in $\lambda$, an integration by parts w.r.t. $\olambda$ replaces \eqref{eq:4c} by
\begin{equation}
\Tr_{\HH_2} \Big(  \big[ R_{21}, f(x_1)\big] E_{12}(\lambda) T_2(\lambda) \big[\Phi(x), E_{22}(\lambda)\big]\Big).
\end{equation}

\textbf{4.}  Fix $N \in \N$. Since $\Phi$ has compact support and $\Phi'$ vanishes in $[-1,1]^2$, there exists $c_N(\cdot;\lambda) \in \Psi_h^{(22)}(1)$ with compact support, vanishing in $[-1,1]^2$, analytic in $\lambda$ such that
\begin{equations}\label{eq:4e}
\big[\Phi(x), E_{22}(\lambda)\big] = \Op_h\big(c_N(\cdot;\lambda)\big) + h^N \cdot \Psi_h^{(22)} \left( \lr{x}^{-\infty} \right).
\end{equations}
From \eqref{eq:3u}, there exists $b_N(\cdot;\lambda) \in  S^{(22)}(1)$, with support in $\R \times [-1, 1] \times \R^2$ and with seminorms blowing up at worst polynomially with $|\Im \lambda|^{-1}$,  such that 
\begin{equation}\label{eq:4f}
T_2(\lambda) = \Op_h\big(b_N(\cdot;\lambda)\big) + h^N \cdot \Psi_h^{(22)} (1).
\end{equation}
Finally, there exists $d_N(\cdot;\lambda) \in \Psi_h^{(22)}(1)$ with support in $[-1,1] \times \R^3$ such that
\begin{equation}\label{eq:4g}
\big[ R_{21}, f(x_1)\big] E_{12}(\lambda) = \Op_h\big(d_N(\cdot;\lambda)\big) + h^N \cdot \Psi_h^{(22)} (1).
\end{equation}

We remark that $b_N(\cdot;\lambda)$, $c_N(\cdot;\lambda)$ and $d_N(\cdot,\lambda)$ have disjoint supports. 
From the composition theorem applied to \eqref{eq:4e}, \eqref{eq:4f} and \eqref{eq:4g} we deduce that
\begin{equation}
\big[ R_{21}, f(x_1)\big] E_{12}(\lambda) T_2(\lambda) \big[\Phi(x), E_{22}(\lambda)\big] \in h^N \cdot \Psi_h^{(22)} \left( \lr{x}^{-\infty} \right)
\end{equation}
with bounds blowing up at worst polynomially with $|\Im \lambda|^{-1}$. Therefore, there exists $\az_N > 0$ such that
\begin{equation}\label{eq:4j}
\Tr_{\HH_2} \Big(  \big[ R_{21}, f(x_1)\big] E_{12}(\lambda) T_2(\lambda) \big[\Phi(x), E_{22}(\lambda)\big]\Big) = O\left( \dfrac{h^N}{|\Im \lambda|^{\az_N}} \right).
\end{equation}

\textbf{6.}  We go back to \eqref{eq:4b} and we recall that \eqref{eq:4i} is analytic. Moreover, \eqref{eq:4c} equals $O\left( h^N|\Im \lambda|^{-\az_N} \right)$ modulo an analytic term. Since $\p_\lambda\tg$ is almost analytic, we deduce that for any $N$, $\JJ_e(P_-,P_+)$ equals 
\begin{equations}
 - \int_{\C^+} \dd{^2\tg(\lambda,\olambda)}{\olambda \p\lambda}\cdot \Tr_{\HH_2} \Big( E_{21}(\lambda) \big[\Qq_h,f(x_1)\big]E_{12}(\lambda) \cdot E_{22}(\lambda)^{-1} \Phi(x) \Big) \cdot \dfrac{d^2 \lambda}{\pi} + O(h^\infty)
 \\
 =
 \int_{\C^+} \dd{^2\tg(\lambda,\olambda)}{\olambda \p\lambda} \cdot \Tr_{\HH_2} \Big( \big[E_{22}(\lambda),f(x_1)\big] E_{22}(\lambda)^{-1} \Phi(x) \Big) \cdot \dfrac{d^2 \lambda}{\pi} + O(h^N).
\end{equations} 
This completes the proof.
\end{proof}

We next use Theorem \ref{thm:1} to express $\JJ_e(P_-,P_+)$ in terms of asymptotic quantities. We first introduce the asymptotic leading symbols of the effective Hamiltonian:
\begin{equations}\label{eq:1y}
E_\pm(\xi;\lambda) \de - R_{21}(\xi) \big( \Qq_\pm(\xi) - \lambda\big)^{-1} R_{12}(\xi), \ \ \ \ \text{where} 
\\
\Qq_\pm(\xi) \de \psi\big(\Pp_\pm(\xi)\big), \ \ \ \ \Pp_\pm(\xi) \de \sum_{|\az| \leq 2} a_{\az,\pm}(y) (D_y+\xi)^\az.
\end{equations}
We define an index for $E_\pm(\xi;\lambda)$:
\begin{equation}\label{eq:5d}
\JJ(E_\pm) = -\int_{\p\Omega} \int_{\Tt^2_*}\Tr_{\C^d} \left(\left(\dd{E_\pm}{\xi_1}  E_\pm^{-1} \dd{ \big(\p_\lambda E_\pm \cdot E_\pm^{-1}\big)}{\xi_2} \right)(\xi;\lambda) \right)  \dfrac{d\xi}{(2\pi)^2} \dfrac{d^1\lambda}{2i\pi},
\end{equation}
where we recall that $\Tt^2_* = (\Tt^2)^*$ is the two-torus $\R^2/(2\pi\Z)^2$.

\begin{theorem}\label{thm:2} We have
\begin{equation}
\JJ_e(P_-,P_+) = \JJ(E_+) - \JJ(E_-).
\end{equation}
\end{theorem}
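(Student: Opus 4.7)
The starting point is Theorem~\ref{thm:1}, which expresses $\JJ_e(P_-,P_+)$ as an area integral over $\lambda\in\C^+$ of $\p^2_{\olambda\lambda}\tg$ against $\Tr_{\HH_2}\big([E_{22}(\lambda),f(x_1)]E_{22}(\lambda)^{-1}\Phi(x)\big)$. My plan is to unfold this trace via semiclassical symbolic calculus and then apply two Stokes-type manipulations: one in the complex variable $\lambda$, converting the area integral over $\C^+$ into a contour integral over $\p\Omega$, and one in $x_2\in\R$, collapsing the $\R^2$-integration from Dimassi's trace formula into a boundary evaluation at $x_2=\pm\infty$, where $E_{22}$ stabilizes to $E_\pm$.

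First, I compute symbols. Let $e(x,\xi;\lambda)$ denote the leading Weyl symbol of $E_{22}(\lambda)\in\Psi_h^{(22)}(1)$. Because $f(x_1)$ is scalar-valued and depends on $x_1$ alone, every even-order Moyal bracket cancels in the commutator, leaving
\[
[E_{22}(\lambda),f(x_1)] = \Op_h\!\left(\dfrac{h}{i}\,f'(x_1)\,\p_{\xi_1} e(x,\xi;\lambda)\right) + O(h^3)
\]
(in an order function with $x_1$-support in $\supp f'$). Composing with $E_{22}(\lambda)^{-1}\Phi(x)$ (whose leading symbol is $e^{-1}\Phi$) and applying Dimassi's trace formula \eqref{eq:0f} produces an asymptotic expansion $\Tr_{\HH_2}([E_{22},f]E_{22}^{-1}\Phi)(\lambda)=\sum_{k\ge -1} h^k I_k(\lambda)+O(h^\infty)$, where each $I_k$ is an $h$-independent integral over $\R^2\times\Tt^2_*$ of a polynomial in $e$, $e^{-1}$, $\Phi$, $f'$ and their derivatives; by selfadjointness of $\Qq_h$, each $I_k$ extends holomorphically to $\C^+$.

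Second, I deform the $\lambda$-contour. Using almost-analyticity of $\tg$ (so $\p_\lambda\tg|_\R=g'$ while $\p^2_{\olambda\lambda}\tg=O(|\Im\lambda|^\infty)$), Stokes' formula on $\Omega$ yields the identity
\[
\int_{\C^+} \dd{^2\tg(\lambda,\olambda)}{\olambda\,\p\lambda}\cdot F(\lambda)\,\dfrac{d^2\lambda}{\pi} \ = \ -\dfrac{1}{2i\pi}\oint_{\p\Omega} F(\lambda)\,d^1\lambda
\]
for every $F$ analytic on $\ove{\Omega}$. Applying this termwise to $\sum h^k I_k$ converts each area integral into a contour integral; $h$-independence of $\JJ_e(P_-,P_+)$ then forces $\oint_{\p\Omega} I_k\,d^1\lambda=0$ for $k\neq 0$, reducing the problem to identifying $\oint_{\p\Omega} I_0(\lambda)\,d^1\lambda$.

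Third, I identify $I_0(\lambda)$ via integration by parts in $x$. Since $e(x,\xi;\lambda)\to E_\pm(\xi;\lambda)$ as $x_2\to\pm\infty$, I use the Jacobi-type identity $\p_\lambda e^{-1}=-e^{-1}\,\p_\lambda e\cdot e^{-1}$, cyclicity under $\Tr_{\C^d}$, and periodicity of $e$ in $\xi\in\Tt^2_*$ (which annihilates total $\xi$-derivatives) to rewrite the integrand of $I_0(\lambda)$ as a total $x_2$-derivative of the Chern--Simons-like density appearing in \eqref{eq:5d}, times $f'(x_1)\Phi_0(x_1)\Phi_0(x_2)$. Using $\int_\R f'(x_1)\Phi_0(x_1)\,dx_1=1$ (arrangeable via Lemma~\ref{lem:2a} by taking $\supp f'\subset\supp\Phi_0$) and integrating by parts in $x_2$, the $\R^2$-integration collapses to the asymptotic difference at $x_2=\pm\infty$, so $I_0(\lambda)$ becomes (up to the normalization in \eqref{eq:5d}) the difference of the $\Tt^2_*$-integrands of $\JJ(E_\pm)$. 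Combining with the contour step of Step~2 produces $\JJ_e(P_-,P_+)=\JJ(E_+)-\JJ(E_-)$.

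The hard part will be Step~3: re-expressing the $h^0$-coefficient $I_0$ as an explicit total $x_2$-derivative modulo exact $\xi$-forms requires careful use of matrix-algebraic identities (Jacobi, cyclicity) applied to the full Moyal expansion beyond leading order, together with control of the subprincipal symbol of $E_{22}^{-1}$, so that all terms not matching the Chern--Simons target structure assemble into total derivatives. A secondary challenge in Step~2 is the compatibility between the polynomial blow-up of symbolic bounds as $\Im\lambda\to 0$ and the $O(h^\infty)$ remainders from Dimassi's formula, so that the contour deformation to $\p\Omega$ commutes with the asymptotic expansion at every order.
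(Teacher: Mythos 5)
The proposal correctly identifies the ingredients — Theorem \ref{thm:1}, Dimassi's trace formula, $h$-independence of $\JJ_e$, localization of $\p_{x_2}\Phi$ at $\pm\infty$, and a $\lambda$-contour deformation — but it omits the single algebraic step that makes the paper's computation tractable, and that omission is load-bearing.

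The paper does \emph{not} apply Dimassi's formula to $[E_{22},f]E_{22}^{-1}\Phi$ directly. Instead, it first integrates by parts in $\lambda$ at the operator level: writing $\p_\lambda\Tr_{\HH_2}([E_{22},f]E_{22}^{-1}\Phi)$ and then regrouping so that a term which is the trace of a commutator vanishes, one arrives at the rewritten integrand $\Tr_{\HH_2}\big([E_{22},f]\,E_{22}^{-1}\,[E_{22}'E_{22}^{-1},\Phi]\big)$ (equation \eqref{eq:4z}). This is a \emph{double}-commutator structure: one commutator with $f(x_1)$, one with $\Phi(x)$. Each Moyal commutator with a scalar multiplication contributes a factor $h$ at leading order, so the symbol of the whole expression is $h^2\sigma_0 + O(h^3)$. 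Dimassi's $h^{-2}$ then cancels exactly, so the $h$-independent value of $\JJ_e$ is the integral of the \emph{leading-order} Poisson-bracket symbol $\sigma_0$, with all subprincipal contributions pushed to $O(h)$ and hence killed automatically by $h$-independence. There is nothing to compute beyond leading symbols.

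In your version, you apply Dimassi's formula to the single-commutator expression, whose symbol is $h\cdot\tfrac{1}{i}f'\p_{\xi_1}e\cdot e^{-1}\Phi + O(h^2)$. This produces an expansion $h^{-1}I_{-1}(\lambda) + I_0(\lambda) + O(h)$. The $I_{-1}$ term does vanish after taking $\Tr_{\C^d}$, since $\Tr(\p_{\xi_1}e\cdot e^{-1}) = \p_{\xi_1}\log\det e$ integrates to zero over the $\xi_1$-torus; but this already relies on a structural observation you would have to make explicit. More seriously, $I_0(\lambda)$ is \emph{not} the integral of a leading symbol: it mixes the subprincipal symbol of $E_{22}(\lambda)$, the $O(h)$ corrections in $E_{22}(\lambda)^{-1}$, and the $h^2$-term of the Moyal expansion of the commutator. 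Your Step 3, which proposes to reorganize all of these into a total $x_2$-derivative of a Chern--Simons density "using matrix-algebraic identities applied to the full Moyal expansion beyond leading order", is exactly where the argument stalls: you correctly flag it as the hard part, but it is not merely technical — the structure needed to make it collapse is the double-commutator rearrangement, and that rearrangement lives at the operator level, before one ever passes to symbols. You should do the $\lambda$-integration-by-parts and the commutator-trace cancellation first (eq.~\eqref{eq:4z}), and only then invoke Dimassi; after that, Steps 4--6 of the paper (localization via $\p_{x_2}\Phi$ and residue counting on $\p\Omega$) match what you have in mind.

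One smaller imprecision: you state that the Stokes/contour identity applies "for every $F$ analytic on $\ove{\Omega}$", and separately that each $I_k$ extends holomorphically to $\C^+$. What is actually needed, and what the paper uses, is that the relevant $F(\lambda)$ is \emph{meromorphic} across the real axis near $\supp g'$, with poles only at spectral points of $\Qq_\pm(\xi)$ lying outside the gap where $g$ is locally constant; the contour deformation then picks up residues weighted by $g$, and only the simple-pole contributions survive. Analyticity in $\C^+$ alone would make the pairing with $\p^2_{\olambda\lambda}\tg$ trivially zero.
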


\begin{proof} \textbf{1.}  Theorem \ref{thm:1} shows that modulo lower order terms, $\JJ_e(P_-,P_+)$ is equal to
\begin{equation}
\int_{\C^+}\dd{^2\tg(\lambda,\olambda)}{\lambda \p \olambda} \cdot \Tr_{\HH_2} \Big( \big[E_{22}(\lambda),f(x_1)\big] E_{22}(\lambda)^{-1} \Phi(x) \Big) \cdot \dfrac{d^2\lambda}{\pi}.
\end{equation}
We integrate by parts with respect to $\lambda$. This produces the term
\begin{equations}
\dd{}{\lambda} \Tr_{\HH_2} \Big( \big[E_{22}(\lambda),f(x_1)\big] E_{22}(\lambda)^{-1} \Phi(x) \Big) = t_1(\lambda) + t_2(\lambda),  \ \ \ \ \  \ \text{where:}
\\
t_1(\lambda) \de  \Tr_{\HH_2} \Big( \big[E_{22}'(\lambda),f(x_1)\big] E_{22}(\lambda)^{-1} \Phi(x) \Big) 
\\
t_2(\lambda) \de - \Tr_{\HH_2} \Big( \big[E_{22}(\lambda),f(x_1)\big] E_{22}(\lambda)^{-1} E_{22}'(\lambda)E_{22}(\lambda)^{-1} \Phi(x) \Big).
\end{equations}
In $t_2(\lambda)$, we commute $E_{22}'(\lambda)E_{22}(\lambda)^{-1}$ with $\Phi(x)$, then move it cyclically to the left. This shows that $t_2(\lambda) = t_3(\lambda) + t_4(\lambda)$, where
\begin{equations}
t_3(\lambda) \de - \Tr_{\HH_2} \Big( \big[E_{22}(\lambda),f(x_1)\big] E_{22}(\lambda)^{-1} \big[E_{22}'(\lambda)E_{22}(\lambda)^{-1}, \Phi(x)\big] \Big),
\\
t_4(\lambda) \de - \Tr_{\HH_2} \Big( E_{22}'(\lambda)E_{22}(\lambda)^{-1}\big[E_{22}(\lambda),f(x_1)\big] E_{22}(\lambda)^{-1} \Phi(x)\big] \Big) 
\\
= \Tr_{\HH_2} \Big( E_{22}'(\lambda) \big[E_{22}(\lambda)^{-1},f(x_1)\big]  \Phi(x) \Big).
\end{equations}
In particular, $t_1(\lambda) + t_4(\lambda) = 0$ since it is the trace of trace-class commutators: 
\begin{equations}
t_1(\lambda) + t_4(\lambda) = \Tr_{\HH_2} \Big(  \big[E_{22}'(\lambda)E_{22}(\lambda)^{-1},f(x_1)\big]  \Phi(x) \Big) 
\\
= \Tr_{\HH_2} \Big(  \big[E_{22}'(\lambda)E_{22}(\lambda)^{-1}\Phi(x),f(x_1)\big]   \Big) = 0.
\end{equations}
We conclude that modulo $O(h^\infty)$, $\JJ_e(P_-,P_+)$ is equal to
\begin{equation}\label{eq:4z}
\int_{\C^+}\dd{\tg(\lambda,\olambda)}{\olambda} \cdot \Tr_{\HH_2} \Big( \big[E_{22}(\lambda),f(x_1)\big] E_{22}(\lambda)^{-1} \big[E_{22}'(\lambda)E_{22}(\lambda)^{-1}, \Phi(x)\big] \Big) \cdot \dfrac{d^2\lambda}{\pi}.
\end{equation}

\textbf{2.}  We use Dimassi's formula \eqref{eq:0f}. The equation \eqref{eq:4z} becomes 
\begin{equation}\label{eq:4o}
\JJ_e(P_-,P_+) = 
 \int_{\C^+}\dd{\tg(\lambda,\olambda)}{\olambda} \cdot \int_{\R^2 \times \Tt^2_*}\Tr_{\C^d} \big( \sigma(x,\xi;\lambda) \big)  \dfrac{dx d\xi}{(2\pi h)^2} \cdot \dfrac{d^2\lambda}{\pi} + O(h^\infty),
\end{equation}
where $\sigma(\cdot;\lambda) \in S^{(22)}(1)$ is the symbol of 
\begin{equation}\label{eq:4n}
\big[E_{22}(\lambda),f(x_1)\big] E_{22}(\lambda)^{-1} \big[E_{22}'(\lambda)E_{22}(\lambda)^{-1}, \Phi(x)\big].
\end{equation}

\textbf{3.}  Because of \eqref{eq:0g} and \eqref{eq:3x}, we can write 
\begin{equations}
E_{22}(\lambda) = \Op_h\big(E(\cdot;\lambda)\big) + O_{\Psi^{(22)}_h(1)}\big(h |\Im \lambda|^{-8}\big), \ \ \ \ \text{where} 
\\
 E(x,\xi;\lambda) \de -R_{21}(\xi) \big( \Qq(x,\xi)-\lambda \big)^{-1} R_{12}(\xi).
\end{equations}
The composition theorem applied to \eqref{eq:4n} shows that \begin{equations}\label{eq:5a}
\sigma(\cdot;\lambda) = h^2\sigma_0(\cdot;\lambda)+ O_{S^{(22)}(1)}\big(h^3 |\Im \lambda|^{-16}\big) \ \ \ \ \text{where}
\\
\sigma_0(x,\xi;\lambda) \de
\left(\dfrac{1}{i} \big\{ E, f  \big\} E^{-1} \cdot \dfrac{1}{i} \big\{ \p_\lambda E \cdot E^{-1}, \Phi \big\}\right)(x,\xi;\lambda).
\end{equations}
We observe that neither $\sigma_0$ not $\JJ_e(P_-,P_+)$ depend on $h$. Hence, \eqref{eq:4o} reduces to a $h$-independent formula:
\begin{equation}\label{eq:4p}
\JJ_e(P_-,P_+) = 
 \int_{\C^+}\dd{\tg(\lambda,\olambda)}{\olambda} \cdot \int_{\R^2 \times \Tt^2_*}\Tr_{\C^d} \big( \sigma_0(x,\xi;\lambda) \big)  \dfrac{dx d\xi}{(2\pi)^2} \cdot \dfrac{d^2\lambda}{\pi}.
\end{equation}

\textbf{4.}  We simplify the expression \eqref{eq:5a} for $\sigma_0$. Recall that $\Phi(x) = \Phi_0(x_1) \Phi_0(x_2)$ where $\Phi_0$ is equal to $1$ on $[-1,1]$. For $x \in \supp(f')$, $\p_{x_1} \Phi (x) = \Phi_0(x_2) \Phi_0'(x_1) = 0$. The support of $\p_{x_2} \Phi$ does not intersect the strip $\R \times (-1,1)$. Therefore we can write
\begin{equation}
\supp(\p_{x_2} \Phi) = S_+ \cup S_-, \ \ \ \ S_\pm \de \supp(\p_{x_2} \Phi) \cap \{ \pm x_2 \geq 1\}. 
\end{equation}
For $x \in S_\pm$, $E(x,\xi;\lambda) =  E_\pm(\xi;\lambda)$.
We deduce that for $\pm x_2 > 0$, 
\begin{equations}
\sigma_0(x,\xi;\lambda) = - \left(\dd{E_\pm}{\xi_1}  E_\pm^{-1} \dd{ \big(\p_\lambda E_\pm \cdot E_\pm^{-1}\big)}{\xi_2} \right)(\xi;\lambda) \cdot \dd{f(x_1)}{x_1}  \dd{ \Phi(x)}{x_2}.
\end{equations}

\textbf{5.}  On the support of $f'$, $\Phi(x) = \Phi_0(x_1) \Phi_0(x_2) = \Phi_0(x_2)$. Therefore
\begin{equation}
\int_{\R \times \R^\pm} \dd{f(x_1)}{x_1} \dd{\Phi(x)}{x_2} dx = \int_{\R^\pm} \dd{\Phi_0(x_2)}{x_2} dx_2 = \mp 1.
\end{equation}
It follows that
\begin{equations}\label{eq:4q}
F(\lambda) \de \int_{\R^2 \times \Tt^2_*}\Tr_{\C^d} \big( \sigma_0(x,\xi;\lambda) \big) \dfrac{dx d\xi}{(2\pi)^2}
\\
 = \sum_\pm \pm \int_{\Tt^2_*}\Tr_{\C^d} \left(\left(\dd{E_\pm}{\xi_1}  E_\pm^{-1} \dd{ \big(\p_\lambda E_\pm \cdot E_\pm^{-1}\big)}{\xi_2} \right)(\xi;\lambda) \right)  \dfrac{d\xi}{(2\pi)^2}.
\end{equations}

\textbf{6.}  From \eqref{eq:4p} and the definition \eqref{eq:4q} of $F$,
\begin{equation}\label{eq:4r}
\JJ_e(P_-,P_+) = 
 \int_{\C^+}\dd{\tg(\lambda,\olambda)}{\olambda} \cdot F(\lambda) \cdot \dfrac{d^2\lambda}{\pi}.
\end{equation}
We remove the dependence in $\tg$. We observe that $F$ is meromorphic in $\lambda \in \Omega$. Assume now that $\lambda_\star \in \Omega$ is a pole of $F$. Then there exists $\xi_\star$ such that $E_+^{-1}$ or $E_-^{-1}$ has a pole at $(\xi_\star;\lambda_\star)$. From \eqref{eq:1y}, $\lambda_\star \in \sigma_{L^2(\Tt^2)}\big(\Qq_+(\xi)\big) \cup \sigma_{L^2(\Tt^2)}\big(\Qq_-(\xi)\big)$. From the spectral gap assumption, $g(\lambda) = 1$ near $\lambda_\star$. 

We integrate \eqref{eq:4r} by parts with respect to $\olambda$:
\begin{equation}\label{eq:4s}
\JJ_e(P_-,P_+) = 
 -\int_{\C^+}\tg(\lambda,\olambda) \cdot \dd{F(\lambda)}{\olambda} \cdot \dfrac{d^2\lambda}{\pi}
\end{equation}
where we see $\p_\olambda F$ as a distribution, whose singular support is within poles of $F$. To simplify \eqref{eq:4s}, we expand $F$ as a Lorenz series near the pole $\lambda_\star$. From the identities
\begin{equation}
\dfrac{(-1)^j}{\pi}\dd{}{\olambda}(\lambda-\lambda_\star)^{-j-1}  =   \delta^{(j)}(\lambda-\lambda_\star) \ \ \text{on} \ \ \DD'(\C^+); \ \ \ \ g^{(j)}(\lambda_\star) = 
 \systeme{1  \ \text{ if } \ j =0 \\ 0 \ \text{ if } \ j \geq 1},
\end{equation}
we see that only terms of the form $(\lambda-\lambda_\star)^{-1}$ in the Lorenz development of $F$ contribute to \eqref{eq:4s}; and more precisely,
\begin{equation}\label{eq:1v}
\JJ_e(P_-,P_+) = -\sum_{\lambda_\star} \Res(F,\lambda_\star) = -\int_{\p\Omega} F(\lambda) \dfrac{d^1\lambda}{2i\pi}.
\end{equation}
This completes the proof: \eqref{eq:5d} appear in the RHS of \eqref{eq:1v}. \end{proof}

\section{Relation to the Chern index}\label{sec:4}

In this section we complete the proof of Theorem \ref{thm:0}. Deriving the Chern number $c_1(\EE_\pm)$ from $\JJ(E_\pm)$ turns out to be surprisingly involved. We first prove
\begin{equation}
\JJ_e(P_-,P_+) = \JJ_e(P_-,P_0) + \JJ_e(P_0,P_+),
\end{equation}
where $P_0$ was defined in \S\ref{sec:24}.
This formula explicitly splits $\JJ_e(P_-,P_+)$ in components for $x_2 \geq 1$ and $x_2 \leq -1$. This allows us to pick separate effective Hamiltonians for $x_2 \leq -1$ and $x_2 \geq 1$. Hence, Theorem \ref{thm:0} follows from the separate (and similar) computation of $\JJ_e(P_-,P_0)$ and $\JJ_e(P_0,P_+)$.

Theorem \ref{thm:2} computes $\JJ_e(P_0,P_+)$ in terms of an effective Hamiltonian $E_{22}(\lambda)$. There remains quite a bit of  flexibility in the choice of $E_{22}(\lambda)$. We design an effective Hamiltonian $\tE_{22}(\lambda)$ that is suitable for the computation of $\JJ_e(P_0,P_+)$. This tremendously simplifies the derivation of $2i\pi \cdot \JJ_e(P_0,P_+) = c_1(\EE_+)$. 

However $\tE_{22}(\lambda)$ may not be suitable for the calculation of $\JJ_e(P_-,P_0)$. But the same approach will design another suitable effective Hamiltonian and prove $2i\pi \cdot \JJ_e(P_-,P_0) = -c_1(\EE_-)$. This will complete the proof of Theorem \ref{thm:0}.

\subsection{Chern number}\label{sec:41} We review how to define the bulk index of $P_+$. We recall that $\lambda_0 \notin \sigma_{L^2(\R^2)}(P_+)$. By Floquet--Bloch theory \cite{RS:78,Ku:16}, for any $\xi \in \R^2$, $\lambda_0 \notin \sigma_{L^2(\Tt^2)}\big( P_+(\xi)\big)$, where
\begin{equation}
P_+(\xi) \de \sum_{|\az| \leq 2} a_{\az,+}(y) (D_y+\xi)^\az \ : \ L^2(\Tt^2) \rightarrow L^2(\Tt^2).
\end{equation}
Let $\FF_+$ be the smooth vector bundle over $\R^2$ whose fiber at $\xi \in \R^2$ is
\begin{equation}
\FF_+(\xi) \de \Range\big( \Pi_+(\xi) \big), \ \ \ \ \Pi_+(\xi) \de \1_{(-\infty,\lambda_0]} \big( P_+(\xi)\big).
\end{equation}

For any $k \in \Z^2$ and $\xi \in \R^2$, 
\begin{equations}\label{eq:1g}
e^{-2ik\pi y} P_+(\xi) e^{2ik\pi y} = P_+(\xi+2k\pi), \ \ \ \ e^{-2ik\pi y} \Pi_+(\xi) e^{2ik\pi y} = \Pi_+(\xi+2k\pi), \ \ \ \ \text{thus}
\\
k \in \Z^2, \ \ \xi \in \R^2 \ \ \Rightarrow \  \ \FF_+(\xi+2k\pi) = e^{-2ik\pi y} \cdot\FF_+(\xi). 
\end{equations}
These relations show that $\FF_+ \rightarrow \R^2$ induces   a bundle $\EE_+ \rightarrow \Tt^2_*$, defined as
\begin{equation}\label{eq:0h}
\EE_+ \de \FF_+/_\sim, \ \ \ \ \text{where} \ \ (\xi,v) \sim (\xi',v') \ \ \Leftrightarrow \ \ \systeme{ \xi-\xi' \in (2\pi \Z)^2 \\ e^{i\xi y}v = e^{i\xi'y}v },
\end{equation}
see e.g. \cite[\S2]{Pa:07}. Another way to define $\EE_+$ consists of looking at $P_+$ on spaces of pseudoperiodic functions, see e.g. \cite[\S2]{Dr:19b}.

Complex vector bundles over $\Tt^2_*$ are classified by their rank and their Chern number, $c_1(\EE_+)$. This integer is defined by integrating a curvature on $\EE_+$. Analogously to the Gauss--Bonnet theorem, the final result does not depend on the choice of curvature: it is a topological invariant. Taking for instance the Berry curvature \cite{Be:84,Si:83},
\begin{equation}
c_1(\EE_+) = \dfrac{i}{2\pi} \int_{\Tt^2_*} \Tr_{L^2(\Tt^2)} \Big( \Pi_+(\xi) \big[ \p_1\Pi_+(\xi), \p_2 \Pi_+(\xi) \big] \Big) d\xi.
\end{equation}

If $c_1(\EE_+) = 0$, then $\EE_+$ is trivial: it admits a smooth orthonormal equivariant frame -- i.e. transforming like \eqref{eq:0h}. In other words, there exist smooth functions $\vp_1(y,\xi), \dots, \vp_n(y,\xi) \in C^\infty(\Tt^2 \times \R^2)$, such that for any $\xi$,
\begin{equations}
\EE_+(\xi) = \big[ \vp_1(\cdot,\xi), \dots, \vp_n(\cdot,\xi) \big] \ \ \ \text{and} 
\\ 
\vp_j(y,\xi+2k\pi) = e^{-2ik\pi y}, \ \ k \in \Z^2; \ \ \ \ \big\langle \vp_j(\cdot,\xi), \vp_\ell(\cdot,\xi) \big\rangle_{L^2(\Tt^2)} = \delta_{jl}.
\end{equations}
We refer to \cite[\S3]{Mo:17} for the proof.

\subsection{A concatenation formula}\label{sec:6.1} Let $\rho_0(x_2) \in C^\infty(\R,\R)$ be independent of $x_1$, with
\begin{equation}
\rho_0(x) = \systeme{ 0 & \text{ for }  x_2 \leq 0 \\ x_2 & \text{ for } x_2 \geq 1}.
\end{equation}
We define $\rho(x) = \big(0,\rho_0(x_2)\big)$ and we set
\begin{equation}
\tP_h \de \Re\Bigg( \sum_{|\az| \leq 2} c_\az(\rho(hx),x) D_x^\az \Bigg) \ : \ L^2(\R^2) \rightarrow L^2(\R^2)
\end{equation}
The same arguments as \S\ref{sec:24} show that $\tP_h$ is a selfadjoint elliptic operator of order $2$. We observe that $c_\az(\rho(hx),x) = c_\az(0,x)$ for $x_2 \leq 0$; and $c_\az(\rho(hx),x) = c_\az(hx,x)$ for $hx_2 \geq 1$. Thus, the asymptotics of $\tP_h$ for $hx_2 \geq 1$ and $hx_2 \leq 0$ are respectively $P_0$ -- see \eqref{eq:1c} -- and $P_+$. We deduce that $\JJ_e(P_0,P_+) = \JJ_e(\tP_h)$.

Moreover, $\tP_h$ is unitarily equivalent to
\begin{equation}
\tPp_h \de \Re\Bigg(\sum_{|\az| \leq 2} c_\az(\rho(x),y) (D_y+hD_x)^\az \Bigg) \ : \ \HH_1 \rightarrow \HH_1.
\end{equation}
This operator is semiclassical, with leading symbol
\begin{equation}
\tPp(x,\xi) \de \Re\Bigg(\sum_{|\az| = 2} c_\az(\rho(x),y) (D_y+\xi)^\az \Bigg) = \Re\big( \Pp(\rho(x),\xi) \big) = \Pp\big(\rho(x),\xi\big).
\end{equation}

In particular, if $\vp_1, \dots, \vp_d$ satisfy \eqref{eq:1k} and \eqref{eq:1f}, then they also satisfy
\begin{equation}\label{eq:1q}
u \in \big[ \vp_1(\cdot,\xi), \dots, \vp_d(\cdot,\xi)\big]^\perp \ \Rightarrow \ 
 \big\langle \big( \tPp(x,\xi) - \lambda_+) u ,u \big\rangle_{L^2(\Tt^2)} \geq 3 |u|^2_{L^2(\Tt^2)}.
\end{equation}
Thus (a) we can construct an effective Hamiltonian $\tE_{22}(\lambda)$ for $\tPp$, with  leading symbol $\tE(x,\xi;\lambda) = E\big(\rho(x),\xi;\lambda\big)$; (b) we can apply Theorem \ref{thm:2} and get 
\begin{equations}\label{eq:1o}
\JJ_e(P_0,P_+) = \JJ\big(\tE_+\big) - \JJ\big(\tE_-\big), \ \ \ \ \text{where}
\\
\tE_-(\xi;\lambda) \de E(0,\xi;\lambda), \ \ \ \ \tE_+(\xi;\lambda) \de E_+(\xi;\lambda).
\end{equations}

The key point is $\tE_-(\xi;\cdot) = (\lambda-\lambda_2) \cdot \Id_{\C^d}$. Indeed, from \eqref{eq:1c}, $\Pp(0,\xi) = (D_y+\xi)^2 + |\lambda_0| + 2$. In particular, 
\begin{equation}
\sigma_{L^2(\R^2)}\big(\Pp(0,\xi)\big) \subset \big[ |\lambda_0|+2,\infty \big).
\end{equation}
This implies $\psi\big(\Pp(0,\xi)\big) = \lambda_2 \cdot \Id_{L^2(\Tt^2)}$. We deduce from \eqref{eq:3x} that
\begin{equation}
\tE_-(\xi;\lambda)^{-1} = -R_{21}(\xi) \big( \psi\big(\Qq(0,\xi)\big)-\lambda \big)^{-1} R_{12}(\xi) = (\lambda-\lambda_2)^{-1} \cdot \Id_{\C^d}.
\end{equation}
Hence $\tE_-(\xi;\cdot) = (\lambda-\lambda_2) \cdot \Id_{\C^d}$.

From \eqref{eq:5d}, $\JJ(\tE_-) = 0$. From \eqref{eq:1o}, $\JJ(\tE_+) = \JJ(E_+)$. Therefore, $\JJ_e(P_0,P_+) = \JJ(E_+)$. The same analysis applies to the pair $(P_-,P_0)$ and yields $\JJ_e(P_-,P_0) = -\JJ(E_-)$. We conclude that
\begin{equation}\label{eq:1s}
 \JJ_e(P_-,P_+) =  \JJ(E_+) -  \JJ(E_-) = \JJ_e(P_-,P_0) + \JJ(P_0,P_+).
\end{equation}

In \S\ref{sec:43} we design a good effective Hamiltonian for the pair $(P_0,P_+)$. Thanks to \eqref{eq:1s}, it does not need to  be also good for the pair $(P_-,P_0)$. The choice of \S\ref{sec:43} tremendously simplifies the derivation of the Chern numbers in \S\ref{sec:44}:
\begin{equation}
2i\pi \cdot \JJ_e(P_0,P_+) = 2i\pi \cdot \JJ\big(\tE_+\big) = c_1(\EE_+). 
\end{equation}
The same argument proves  $2i\pi \cdot \JJ_e(P_-,P_0) = -c_1(\EE_-)$ and ends the proof of~Theorem~\ref{thm:0}.

\subsection{A convenient effective Hamiltonian}\label{sec:43}

\begin{lem}\label{lem:1l} There exist $\vp_1, \dots, \vp_d$ satisfying \eqref{eq:1k} such that for all $(x,\xi) \in \R^2$,
\begin{equations}\label{eq:1p}
\Range\big( \Pi_+(\xi) \big) \ \subset \ \big[ \vp_1(\xi), \dots, \vp_d(\xi) \big];
\\
u \in \big[ \vp_1(\xi), \dots, \vp_d(\xi) \big]^\perp \ \ \Rightarrow \ \ \blr{\big(P(x,\xi) -\lambda_+\big)u,u}_{L^2(\Tt^2)} \geq 3|u|_{L^2(\Tt^2)}^2.
\end{equations}
\end{lem}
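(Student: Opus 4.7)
The plan is to take
\begin{equation*}
V(\xi) := \Range\big(\Pi^M_+(\xi)\big), \qquad \Pi^M_+(\xi) := \1_{(-\infty, M]}\big(P_+(\xi)\big),
\end{equation*}
for an energy $M > \lambda_+$ chosen to avoid $\sigma_{L^2(\Tt^2)}\big(P_+(\xi)\big)$ for all $\xi \in \Tt^2_*$, and then to construct $\vp_1, \ldots, \vp_d$ as a smooth, orthonormal, pseudo-periodic frame for (a suitable enlargement of) $V$. Containment $\Range(\Pi_+(\xi)) \subset V(\xi)$ is automatic once $M > \lambda_0$, and constant rank plus smoothness of $V(\xi)$ follows from the spectral gap condition; pseudo-periodicity $V(\xi + 2\pi k) = e^{-2i\pi k y} V(\xi)$ is a consequence of \eqref{eq:1g}. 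Such high-energy uniform gaps exist by Floquet--Bloch theory, since $\bigcup_\xi \sigma_{L^2(\Tt^2)}(P_+(\xi))$ is a countable union of compact bands whose complement is nonempty arbitrarily far to the right.

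For the positivity condition in \eqref{eq:1p}, let $u \in V(\xi)^\perp$. Decomposing $u$ in the eigenbasis of $P_+(\xi)$, only eigenvalues $>M$ contribute, so $\langle P_+(\xi) u, u\rangle \geq M \|u\|^2$. On the other hand, the integration-by-parts bound $\langle P_+(\xi) u, u\rangle \leq C \|u\|_{H^1(\Tt^2)}^2 + C' \|u\|^2$, valid since $P_+(\xi)$ is order two, yields $\|u\|_{H^1(\Tt^2)}^2 \geq (M - C')/C \cdot \|u\|^2$. Applying the G\aa rding inequality for $\Pp(x, \xi)$---uniformly elliptic in $(x,\xi)$ thanks to pseudo-periodicity in $\xi$ and the compact $x$-dependence of the $c_\az$---gives $\langle \Pp(x,\xi) u, u\rangle \geq c_0 \|u\|_{H^1(\Tt^2)}^2 - C_0 \|u\|^2 \geq (\lambda_+ + 3) \|u\|^2$ for $M$ large enough. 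This confirms the second condition of \eqref{eq:1p}.

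The delicate step is the production of a global smooth orthonormal pseudo-periodic frame of $V$, which requires $V$ to be trivial as a complex vector bundle over $\Tt^2_*$. By the classification of complex vector bundles on the two-torus, a bundle of rank $\geq 2$ is trivial if and only if its first Chern class vanishes. If $c_1(V) \neq 0$, one enlarges $V$ by adjoining further pseudo-periodic sub-bundles of $V(\xi)^\perp$---for instance, additional spectral bands of $P_+(\xi)$ lying above further uniform spectral gaps---whose cumulative Chern class cancels $c_1(V)$. Such an enlargement preserves the containment $\Range(\Pi_+(\xi)) \subset V(\xi)$ and only shrinks the orthogonal complement, so the G\aa rding positivity is preserved. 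The resulting trivial bundle admits the desired frame $\vp_1, \ldots, \vp_d$ satisfying \eqref{eq:1k}.

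The main obstacle is precisely this Chern-class cancellation step: while the K-theory of $\Tt^2_*$ guarantees the abstract existence of a sub-bundle with first Chern class $-c_1(V)$, realizing it concretely inside $L^2(\Tt^2)$ with the prescribed pseudo-periodic structure is not automatic. It requires either carefully engineering spectral bands of $P_+$ to contribute the needed topological charge, or invoking a general stabilization principle for Bloch bundles of elliptic families. This topological reduction is expected to absorb most of the actual technical work in the proof.
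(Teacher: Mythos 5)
Your overall strategy---find a finite-rank, pseudo-periodic, \emph{trivial} sub-bundle of $L^2(\Tt^2)$ containing $\Range(\Pi_+(\xi))$ while preserving the positivity on the orthogonal complement---is indeed the right one, and the G\aa rding/high-frequency part of your argument is sound as far as it goes. But the plan has two genuine gaps, and the first one breaks the very first step.

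\textbf{First gap.} Your starting point $V(\xi) := \Range\big(\1_{(-\infty,M]}(P_+(\xi))\big)$ requires an energy $M$ lying in a uniform spectral gap of $P_+$ above $\lambda_+$. You justify existence by saying the complement of the bands ``is nonempty arbitrarily far to the right.'' This is false in dimension two: by the Bethe--Sommerfeld phenomenon, the spectrum of a two-dimensional periodic elliptic operator fills a half-line $[\Lambda,\infty)$ with only finitely many gaps, so there is in general \emph{no} admissible $M$ beyond $\Lambda$. Even for $-\Delta+V$ with $V \in C^\infty_b(\Tt^2)$ this is a theorem, and it is the generic situation here. Your construction of $V$ therefore cannot get off the ground in the regime where the G\aa rding estimate needs $M$ large. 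This is precisely why the paper does \emph{not} build $V$ out of spectral projectors of $P_+(\xi)$: it instead keeps the a priori given, non-spectral GMS frame $\tvp_1,\dots,\tvp_D$ of \eqref{eq:1f}, which is supplied by the effective-Hamiltonian theory without any gap assumption, and perturbs it.

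\textbf{Second gap.} Your Chern-cancellation step proposes to enlarge $V$ by ``additional spectral bands of $P_+(\xi)$ lying above further uniform spectral gaps whose cumulative Chern class cancels $c_1(V)$.'' Apart from requiring yet more (nonexistent) gaps, this is \emph{exactly} the obstruction the paper singles out in its introduction: in the continuous setting there may be no energy $\lambda_2 > \lambda_0$ at which the Bloch eigenbundle below $\lambda_2$ is trivial, so spectral bands cannot be counted on to cancel the Chern class no matter how many one takes. Your fallback, an unspecified ``stabilization principle for Bloch bundles,'' is where all the work is. The paper's resolution is explicit and non-spectral: it constructs (Appendix~\ref{app:1}) a rank-one equivariant bundle $\C a \to \Tt^2_*$ with Chern number $-c_1(\EE_+)$, embeds it into $L^2(\Tt^2)$ at high internal frequency via $u_0(y,\xi) = e^{2i\pi m_0 y}\sum_j a_j(\xi)v_j(y,\xi)$ so that $|\Delta u_0|$ is large, uses the uniform bound $|\Delta u|\le C|u|$ on $\FF_+$ (and on $\FF_0\oplus\FF_+$ after the fact) to force transversality, and similarly replaces $\tvp_j$ by $\tvp_j$ plus a compactly supported, $O(t^{-1/2})$-small but $O(t)$-Laplacian bump to make $[\vp_1,\dots,\vp_D]$ transverse to $\FF_0\oplus\FF_+$ while leaving the positivity estimate intact by perturbative stability. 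The high-frequency/large-Laplacian trick is the mechanism by which a bundle with prescribed topology is concretely realized inside $L^2(\Tt^2)$ in direct sum with $\FF_+$; this is the content your proposal flags as ``not automatic'' and does not supply.
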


\begin{proof} \textbf{1.}  Since the fibers of $\FF_+$ have (constant) finite dimensions and are contained in $H^2(\Tt^2)$, there exists $C > 0$ such that
\begin{equation}\label{eq:4y}
\xi \in [0,2\pi]^2, \ \ 
u \in \FF_+(\xi) \ \ \Rightarrow \ \  |\Delta u|_{L^2(\Tt^2)} \leq C |u|_{L^2(\Tt^2)}.
\end{equation}

Let $\nu = c_1(\EE_+)$. We construct $u_0(\xi,y) \in C^\infty(\R^2,\Tt^2)$ such that the line bundle $\FF_0 \rightarrow \R^2$ satisfies:
\begin{itemize}
\item[\textbf{(i)}] $\FF_0(\xi+2k\pi) = e^{-2ik\pi y} \cdot \FF_0(\xi)$ when $\xi \in \R^2, k \in \Z^2$ -- i.e. $\FF_0$ is equivariant;
\item[\textbf{(ii)}] The induced bundle $\EE_0 \rightarrow \Tt^2_*$ (see \S\ref{sec:41}) has Chern number $-\nu = -c_1(\EE_+)$;
\item[\textbf{(iii)}] $\FF_0$ and $\FF_+$ are in direct sum.
\end{itemize}

To this end, let $a(\xi) \in C^\infty(\R^2,\C^2)$ such that the line $\C a(\xi)$ induces a vector bundle $\C a \rightarrow \Tt^2_*$, with Chern number $-\nu$. We prove the existence of $a$ in Appendix \ref{app:1}. Fix $v_1(y,\xi), v_2(y,\xi) \in C^\infty(\R^2 \times \R^2)$ such that
\begin{equation}
v(y+\ell,\xi+2\pi k) = e^{-2i\pi k y} \cdot v_j(y,\xi), \ \ \ \ (\ell,k)\in \Z^2.
\end{equation}
Let $m_0 \in \N^2$ sufficiently large (in a sense specified below); and define 
\begin{equation}
u_0(y,\xi) \de e^{2i\pi m_0 y} \cdot \sum_{j=1}^2 a_j(\xi) \cdot v_j(y,\xi).
\end{equation}
We note that $|u_0|_{L^2(\Tt^2)} = 1$. The bundle $\FF_0  = \C u_0 $ over $\R^2$ is equivariant and isomorphic to $\C a \rightarrow \R^2$. Thus it induces a bundle $\EE_0 \rightarrow \Tt^2_*$ with Chern number $-\nu$; this proves \textbf{(i)} and \textbf{(ii)} above. 

We note that $|\Delta u_0|_{L^2_0} = (2\pi m_0)^2 + O(|m_0|)$ for large $m_0$. We adjust $m_0$ so that $|\Delta u_0|_{L^2(\Tt^2)} \geq 2C$, where $C$ appears in \eqref{eq:4y}. Then $u_0(\xi) \notin \FF_+(\xi)$. This shows \textbf{(iii)}: $\FF_0$ and $\FF_+$ are in direct sum.

From the additivity properties of Chern numbers, $c_1(\FF_0 \oplus \FF_+)= 0$. Therefore the bundle $\FF_0 \oplus \FF_+$ admits a smooth equivariant frame, see e.g. \cite[\S3]{Mo:17}. Moreover, as in \eqref{eq:4y}, there exists $C' > 0$ such that
\begin{equation}\label{eq:4u}
\xi \in [0,2\pi]^2, \ \ 
u \in \FF_0(\xi) \oplus \FF_+(\xi) \ \ \Rightarrow \ \  |\Delta u|_{L^2(\Tt^2)} \leq C' |u|_{L^2(\Tt^2)}.
\end{equation}

\textbf{2.}  Fix  $\tvp_1, \dots, \tvp_D$ satisfying \eqref{eq:1k} and such that
\begin{equation}
(x,\xi) \in \R^2, \ \ u \in \big[ \tvp_1(\xi), \dots, \tvp_D(\xi) \big]^\perp \ \ \Rightarrow \ \ \blr{\big(\Pp(x,\xi)-\lambda_+\big)u,u}_{L^2(\Tt^2)} \geq 3 |u|_{L^2(\Tt^2)}^2.
\end{equation}
Let $\vp_0 \in C^\infty_0(\R^2)$ with support in $(0,1)^2$ such that $\Delta \vp_0 \not\equiv 0$. We fix $t > 0$ (large enough in a sense progressively specified below), $y_1, \dots, y_D \in (0,1)^2$ pairwise distinct, and we set
\begin{equation}
\vp_j(y,\xi) \de \tvp_j(y,\xi) + \sum_{m \in \Z} e^{i\xi m} \vp_0\big(t (y-y_j-m)\big).
\end{equation}
We observe that $\big|\vp_j(\xi)-\tvp_j(\xi)\big|_{L^2(\Tt^2)} = O(t^{-1/2})$: for $t$ sufficiently large, $\vp_j(\xi)$ is a small perturbation of $\tvp_j(\xi)$. \cite[Proposition A.3]{DS:99} implies that for every $(x,\xi) \in \R^2 \times \R^2$,
\begin{equation}\label{eq:4v}
 u \in \big[ \vp_1(\xi), \dots, \vp_D(\xi) \big]^\perp  \ \ \Rightarrow  \ \  \blr{\big(\Pp(x,\xi)-\lambda_+\big)u,u}_{L^2(\Tt^2)} \geq 3 |u|_{L^2(\Tt^2)}^2.
\end{equation}

\textbf{3.}  Fix $\xi \in [0,2\pi]^2$. Let $u$ in $\big[ \vp_1(\xi), \dots, \vp_D(\xi) \big]$, with $|u|_{L^2(\Tt^2)} \leq 1$. We write
\begin{equation}\label{eq:4t}
u(y) = \sum_{j=1}^D a_j \cdot \vp_j(y,\xi) = \sum_{j=1}^D a_j \cdot \tvp_j(y,\xi) +  \sum_{m \in \Z}  e^{i\xi m}\sum_{j=1}^D a_j \cdot  \vp_0\big(t (y-y_j-m)\big).
\end{equation}
We note that $\lr{u,\tvp_j(\xi)}_{L^2(\Tt^2)} = a_j + O(t^{-1/2})$, uniformly in $u$. In particular, after increasing $t$, we can assume that $|a_j| \leq 2$. We take the Laplacian of \eqref{eq:4t} and bound below the $L^2(\Tt^2)$-norm:
\begin{equations}
|\Delta u|_{L^2(\Tt^2)} \geq \left| \sum_{j=1}^D a_j t^2 \cdot (\Delta \vp_0)\big(t(\cdot-y_j)\big) \right|_{L^2(\Tt^2)} -\left| \sum_{j=1}^D a_j \Delta \tvp_j(\xi) \right|_{L^2(\Tt^2)}
\\
\geq \left( \sum_{j=1}^D a_j^2 t^2 \cdot |\Delta \vp_0|_{L^2(\Tt^2)}^2 \right)^{1/2} - 2\sum_{j=1}^D  |\Delta \tvp_j(\xi)|_{L^2(\Tt^2)}
\\
 \geq t  \cdot |\Delta \vp_0|_{L^2(\Tt^2)} \cdot |a|_{\C^D} - 2\sum_{j=1}^D  |\Delta \tvp_j(\xi)|_{L^2(\Tt^2)}.  
\end{equations}
In the second line we used that for $t$ sufficiently large and $j=1, \dots, D$,  the supports of $\vp_0\big(t (\cdot-y_j)\big)$ do not intersect. 
 
The functions $\tvp_j(\xi)$ do not depend on $t$. Therefore, if we pick $t$ sufficiently large, $|\Delta u|_{L^2(\Tt^2)} \geq 2C'$, where $C'$ is the constant in \eqref{eq:4u}. It follows that for all $\xi \in [0,2\pi]^2$, the vector spaces $\FF_0(\xi) \oplus \FF_+(\xi)$ and $\big[ \vp_1(\xi), \dots, \vp_D(\xi) \big]$ are in direct sum. The equivariance property extends this relation to all $\xi \in \R^2$. 

We set $d = D+n+1$ and denote a smooth equivariant section of $\FF_0 \oplus \FF_+$ by $\vp_{D+1}, \dots, \vp_d$ (it exists by Step 1). Then 
\begin{equation}
\Range\big(\Pi_+(\xi)\big) = \FF_+(\xi) \subset \big[\vp_1(\xi), \dots, \vp_d(\xi)\big].
\end{equation}
The equation \eqref{eq:4v} a fortiori implies \eqref{eq:1p}. After performing a Gran--Schmidt process on $\vp_1(\xi), \dots, \vp_d(\xi)$, redefining these vectors if necessary, the proof is complete. 
\end{proof}

\subsection{Conclusion}\label{sec:44} Let $\vp_1, \dots, \vp_d$ be given by Lemma \ref{lem:1l} and $R_{12}(\xi)$, $R_{21}(\xi)$ defined according to \eqref{eq:1r}. Because of Lemma \ref{lem:1l}, \eqref{eq:1q} holds. Lemma \ref{lem:1a} implies 
\begin{equation}
u \in \big[ \vp_1(\cdot,\xi), \dots, \vp_d(\cdot,\xi)\big]^\perp  \ \ \Rightarrow \ \ \big \langle \big( \tQq(x,\xi)  - \lambda_+ \big)u,u \big\rangle_{L^2(\Tt^2)} \geq |u|_{L^2(\Tt^2)}^2,
\end{equation}
where $\tQq(x,\xi)= \psi\big(\tPp(x,\xi)\big)$. We note that for $x_2$ sufficiently large, $\tPp(x,\xi) = P_+(\xi)$.

Following \S\ref{sec:6.1}, we have
\begin{equation}\label{eq:5b}
\JJ_e(P_0,P_+) = \JJ(E_+), \ \ \ \ E_+(\xi;\lambda) = -R_{21}(\xi) \big( Q_+(\xi) - \lambda \big)^{-1} R_{12}(\xi),
\end{equation}
where $Q_+(\xi) = \psi\big(P_+(\xi)\big)$. The operator $P_+$ has an $L^2(\R^2)$-spectral gap $[\lambda_0-2\epsilon,\lambda_0+2\epsilon]$. Moreover, we recall that for some $\lambda_1 \leq \lambda_0 -2\epsilon$, $\lambda_2 \geq \lambda_0+2\epsilon$,
\begin{equation}
\psi(\lambda) = \systeme{\lambda_1 \ \text{ if  } \ \lambda \leq \lambda_0-2\epsilon \\ 
\lambda_2 \ \text{ if } \ \lambda \geq \lambda_0+2\epsilon}.
\end{equation}
It follows that
\begin{equations}\label{eq:5c}
Q_+(\xi) = \psi\big(P_+(\xi) \big) = \lambda_1 \Pi_+(\xi) + \lambda_2 \big( \Id - \Pi_+(\xi)\big).
\end{equations}
From \eqref{eq:5b} and \eqref{eq:5c}, we obtain
\begin{equation}
E_+(\xi;\lambda)^{-1} = R_{21}(\xi) \left( \dfrac{\Pi_+(\xi)}{\lambda-\lambda_1}  + \dfrac{\Id-\Pi_+(\xi)}{
\lambda-\lambda_2}\right) R_{12}(\xi).
\end{equation}

Let $u_1(y,\xi), \dots, u_n(y,\xi) \in C^\infty(\Tt^2 \times \R^2)$ be an orthonormal frame of the bundle $\FF_+ \rightarrow \R^2$. It exists because $\R^2$ is contractible \cite[\S1]{Mo:01}. In general, this frame is not equivariant.
Because of \eqref{eq:1r}, $R_{21}(\xi) R_{12}(\xi) = \Id_{\C^d}$. Moreover, (a) $\vp_1(\xi), \dots, \vp_d(\xi)$ form an orthonormal system in $L^2(\Tt^2)$; (b) $u_{1}(\xi), \dots, u_{n}(\xi)$ form an orthonormal system in $L^2(\Tt^2)$; (c) $\big[u_{1}(\xi), \dots, u_{n}(\xi)\big] \subset \big[ \vp_1(\xi), \dots, \vp_d(\xi)\big]$. Hence $R_{21}(\xi)u_1(\xi), \dots, R_{21}(\xi)u_n(\xi)$ form an orthonormal system in $\C^d$. There are fundamental consequences. First,
\begin{equation}
\Pi_1(\xi)\de R_{21}(\xi) \Pi_+(\xi) R_{12}(\xi) = \sum_{j=1}^n R_{21}(\xi)u_{j}(\xi) \otimes R_{21}(\xi)u_{j}(\xi)
\end{equation}
is an orthogonal projection in $\C^d$, which depends periodically on $\xi$. This allows us to define a bundle $\GG_+ \rightarrow \Tt^2_*$ with fibers $\Range \big( \Pi_1(\xi) \big) \subset \C^d$. Second, the map $R_{12}(\xi)$ induces a bundle isomorphism between $\EE_+ \rightarrow \Tt^2_*$ and $\GG_+ \rightarrow \Tt^2$.

In particular, these two bundles have the same topology. This expresses $c_1(\EE_+)$ using the Berry curvature associated to $\Pi_1(\xi)$:
\begin{equation}\label{eq:5f}
c_1(\EE_+) = \dfrac{i}{2\pi} \int_{\Tt^2_*} \Tr_{\C^d} \Big( \Pi_1(\xi) \big[ \p_1 \Pi_1(\xi), \p_2 \Pi_2(\xi) \big] \Big) d\xi.
\end{equation}
We now set $\Pi_2(\xi) = \Id - \Pi_1(\xi)$ and we obtain
\begin{equations}\label{eq:5e}
E_+(\xi;\lambda)^{-1} = \dfrac{\Pi_1(\xi)}{\lambda-\lambda_1} + \dfrac{\Pi_2(\xi)}{\lambda-\lambda_2}, \ \ \ \ 
E_+(\xi;\lambda) = (\lambda-\lambda_1) \Pi_1(\xi) + (\lambda-\lambda_2) \Pi_2(\xi).
\end{equations}
In particular, $\p_\lambda E_+ = \Id_{\C^d}$. The formula \eqref{eq:5d} for $\JJ(E_+)$ simplifies substantially:
\begin{equations}
\JJ(E_+) \de -\int_{\Tt^2_*}\int_{\p\Omega} \Tr_{\C^d} \left(\left(\dd{E_+}{\xi_1}  E_+^{-1} \dd{ \big(\p_\lambda E_+ \cdot E_+^{-1}\big)}{\xi_2} \right)(\xi;\lambda) \right)   \dfrac{d^1\lambda}{2i\pi} \dfrac{d\xi}{(2\pi)^2}
\\
= -\int_{\Tt^2_*}\int_{\p\Omega} \Tr_{\C^d} \left(\left(\dd{E_+}{\xi_1}  E_+^{-1} \dd{ E_+^{-1}}{\xi_2} \right)(\xi;\lambda) \right) \dfrac{d^1\lambda}{2i\pi} \dfrac{d\xi}{(2\pi)^2}.
\end{equations}

Thanks to \eqref{eq:5e}, 
\begin{equations}
 \Tr_{\C^d} \left(\left( \dd{E_+}{\xi_1} E_+^{-1} \dd{E_-^{-1}}{\xi_2} \right)(\xi;\lambda) \right) 
 = \sum_{j,k,\ell = 1,2} \dfrac{\lambda-\lambda_j}{(\lambda-\lambda_k) (\lambda-\lambda_\ell)} \cdot \Tr_{\C^d}\Big( \big(\p_1 \Pi_j \cdot \Pi_k \cdot \p_2 \Pi_\ell\big)(\xi)\Big).
\end{equations}
We recall that $\p\Omega$ encloses $\lambda_1$ but not $\lambda_2$. Hence, the integral
\begin{equation}
\int_{\p\Omega} \dfrac{\lambda-\lambda_j}{(\lambda-\lambda_k) (\lambda-\lambda_\ell)} \dfrac{d\lambda}{2i\pi}
\end{equation}
equals $1$ if $j=k=\ell=1$; $j=2$, $k=\ell=1$; $j=k=2$, $\ell=1$; and $j=\ell=2$, $k=1$. It vanishes in all other cases. We deduce that
\begin{equations}\label{eq:0o}
\int_{\p\Omega} \Tr_{\C^d}\left(\left( \dd{E_+}{\xi_1} E_+^{-1} \dd{(\p_\lambda E_+ \cdot E_+^{-1})}{\xi_2}\right)(\xi;\lambda) \right) \dfrac{d^1\lambda}{2i\pi}  \vspace*{-2mm}
\end{equations}
\begin{equations}
= \Tr_{\C^d}\Big( \Big(\p_1 \Pi_1 \cdot \Pi_1 \cdot \p_2 \Pi_1 + \p_1 \Pi_2 \cdot \Pi_1 \cdot \p_2 \Pi_1 + \p_1 \Pi_2 \cdot \Pi_2 \cdot \p_2 \Pi_1 + \p_1 \Pi_2 \cdot \Pi_1 \cdot \p_2 \Pi_2\Big)(\xi)\Big).
\end{equations}
The first and second term cancel out: $\p_1 (\Pi_1 + \Pi_2) = 0$. We use $\p_j \Pi_2 = -\Pi_1$ in the third and fourth term. Thus \eqref{eq:0o} equals 
\begin{equations}
\Tr_{\C^d}\Big( \Big(-\p_1 \Pi_1 \cdot \Pi_2 \cdot \p_2 \Pi_1 + \p_1 \Pi_1 \cdot \Pi_1 \cdot \p_2 \Pi_1\Big) (\xi) \Big).
\end{equations}
Since $\Pi_1 \Pi_2 = 0$, we get $\p_1 \Pi_1 \cdot \Pi_2 = -\Pi_1 \p_1 \Pi_2 = \Pi_1 \p_1 \Pi_1$. Thus we end up with
\begin{equations}
\Tr_{\C^d}\Big(  \Big(-\Pi_1 \cdot \p_1\Pi_1 \cdot \p_2 \Pi_1 + \p_1 \Pi_1 \cdot \Pi_1 \cdot \p_2 \Pi_1\Big)(\xi) \Big) = - \Tr_{\C^d}\Big( \Pi_1(\xi) \big[\p_1\Pi_1(\xi),\p_2 \Pi_1(\xi)\big]\Big).
\end{equations}
From this identity and \eqref{eq:5f}, we conclude that
\begin{equation}
\JJ_e(P_0,P_+) = \JJ(E_+) = \int_{\Tt^2_*} \Tr_{\C^d}\Big( \Pi_1(\xi) \big[\p_1\Pi_1(\xi),\p_2 \Pi_1(\xi)\big]\Big) \dfrac{d\xi}{(2\pi)^2} = -\dfrac{i}{2\pi} c_1(\EE_+).
\end{equation}

In particular, $2i\pi \cdot \JJ_e(P_0,P_+) = c_1(\EE_+)$. The same procedure shows that $2i\pi \cdot \JJ_e(P_-,P_0) = -c_1(\EE_-)$. The formula \eqref{eq:1s} and $\II_e(P) = i \JJ_e(P)$ end the proof of Theorem \ref{thm:0}:
\begin{equation}
2i\pi \cdot \JJ_e(P_-,P_+) = c_1(\EE_+) - c_1(\EE_-).
\end{equation}

\appendix

\section{}\label{app:1}

\begin{lem} There exists $a(\xi) \in C^\infty(\R^2,\C^2)$ such that the line $\C a(\xi)$ is $(2\pi\Z)^2$-periodic in $\xi$; and such that the vector bundle $\C a \rightarrow \Tt^2_*$ has Chern number $-\nu$.
\end{lem}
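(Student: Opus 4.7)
I would realize the line $\C a(\xi)$ as the range of a smooth $(2\pi\Z)^2$-periodic rank-one orthogonal projection $\Pi: \R^2 \to M_2(\C)$ whose induced line bundle over $\Tt^2_*$ has Chern number $-\nu$, then extract $a$ by invoking the contractibility of $\R^2$. The construction splits into a topological step (build $\Pi$ with the right Chern number) and an analytic step (produce a smooth nowhere-vanishing section of the line spanned by $\Pi$).

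\textbf{Construction of $\Pi$.} Rank-one orthogonal projections in $M_2(\C)$ are naturally parametrized by $\C P^1$, so building $\Pi$ reduces to producing a smooth $(2\pi\Z)^2$-periodic map $\psi: \R^2 \to \C P^1$ whose induced map $\Tt^2_* \to \C P^1 \cong S^2$ has degree $\nu$. Such a map exists by a standard collapse-and-bump construction: fix a small open disk $U \subset (0,2\pi)^2$ and a smooth identification $\overline{U}/\partial U \cong S^2$; then let $\psi$ be the composition of the collapse $\Tt^2_* \to \overline{U}/\partial U \cong S^2$ with any smooth degree-$\nu$ self-map of $S^2$, extended by the constant value $[1:0]$ off $U$. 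Setting $\Pi(\xi)$ to be the orthogonal projection onto the line $\psi(\xi) \subset \C^2$, the resulting line bundle $L \to \Tt^2_*$ is the pullback by $\psi$ of the tautological bundle $\mathcal{O}(-1) \to \C P^1$. Since $c_1\big(\mathcal{O}(-1)\big) = -1$ and $\deg(\psi) = \nu$, one obtains $c_1(L) = -\nu$.

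\textbf{Extraction of $a$, and the main obstacle.} Pulling $L$ back to the contractible cover $\R^2$ yields a trivial line bundle, which admits a smooth nowhere-vanishing global section $a: \R^2 \to \C^2 \setminus \{0\}$ with $\Pi(\xi) a(\xi) = a(\xi)$ for every $\xi$. By construction $\C a(\xi) = \Range\big(\Pi(\xi)\big)$ is $(2\pi\Z)^2$-periodic in $\xi$, and the induced bundle $\C a \to \Tt^2_*$ coincides with $L$, hence has Chern number $-\nu$. The only nontrivial ingredient is the existence of a smooth degree-$\nu$ map $\Tt^2_* \to S^2$; this is classical Hopf degree theory, and once supplied the remainder of the argument is pure bundle theory, requiring no microlocal or semiclassical input.
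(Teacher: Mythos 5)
Your proof is correct, but it takes a genuinely different route from the paper. The paper works entirely with an explicit $2\times 2$ Hermitian family
\begin{equation*}
M_\epsi(\xi) \ = \ \begin{pmatrix} \az(\xi_1) & \beta(\xi_1) + \epsi e^{-i\nu\xi_2} \\ \beta(\xi_1) + \epsi e^{i\nu\xi_2} & -\az(\xi_1)\end{pmatrix},
\end{equation*}
takes $a_\epsi$ to be the negative-energy eigenvector (again invoking contractibility of $\R^2$), writes the Berry curvature of the negative eigenprojector in closed form, and evaluates its integral in the limit $\epsi \to 0$ by localizing on a strip $\{|\xi_1|\leq 1\}$ where $M_\epsi$ reduces to a Dirac-type two-level model whose Chern integral is computable by hand. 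You instead replace this differential-geometric computation by purely topological input: Hopf degree theory gives a smooth degree-$\nu$ map $\psi : \Tt^2_* \to S^2 \cong \C P^1$ constant near the basepoint, and functoriality of $c_1$ together with $c_1(\mathcal{O}(-1)) = -1$ gives the Chern number of the pullback line bundle directly. Your extraction of $a$ via triviality of the pullback over the contractible cover $\R^2$ matches the paper's use of contractibility.

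The trade-offs: the paper's argument is fully self-contained within the semiclassical/spectral framework already present (eigenprojectors, Berry curvature, an explicit residue-style integral), at the cost of an $\epsi$-regularization and a limiting argument. Your version is shorter at the conceptual level, avoids any curvature computation, and makes the topological mechanism transparent; but it imports Hopf degree theory and the normalization $\int_{\C P^1} c_1(\mathcal{O}(-1)) = -1$ as black boxes. One small point to be explicit about if you were to write this out in full: for smoothness of $\psi$ you need the degree-$\nu$ self-map of $S^2$ to be constant on a neighborhood of the basepoint, not merely to fix it, so that the composition with the collapse map $\Tt^2_* \to \overline{U}/\partial U$ is smooth across $\partial U$; your sketch allows for this but does not say it. Otherwise the argument is sound.
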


\begin{proof} \textbf{1.}  Fix $\epsi > 0$, $\az(\xi_1), \beta(\xi_1) \in C^\infty(\R,\R)$, both $2\pi$-periodic, with
\begin{equation}
\xi_1 \in [-1,1] \ \ \Rightarrow \ \ \az(\xi_1) = \xi_1
, \ \ \beta(\xi_1) = 0; \ \ \xi_1 \in [-\pi,\pi] \setminus [-1,1] \ \ \Rightarrow \beta(\xi_1) > 0.
\end{equation}
Let $M_\epsi(\xi) \in C^\infty\big(\R^2,M_2(\C)\big)$ be given by
\begin{equations}
M_\epsi(\xi) \de \matrice{\az(\xi_1) & \beta(\xi_1) + \epsi e^{-i\nu \xi_2} \\ \beta(\xi_1) + \epsi e^{i\nu\xi_2} & -\az(\xi_1)}.
\end{equations}
For any $\xi \in \R^2$, $M_\epsi (\xi)$ has a unique negative eigenvalue. Since $\R^2$ is contractible \cite[\S1]{Mo:01}, $M_\epsi(\xi)$ admits a normalized negative-energy eigenvector $a_\epsi(\xi) \in C^\infty(\R^2,\C^2)$. Since $M_\epsi(\xi)$ is $(2\pi\Z)^2$-periodic, the eigenspace $\C a_\epsi(\xi)$ is $(2\pi\Z)^2$-periodic. Thus it induces a vector bundle $\C a_\epsi \rightarrow \Tt^2_*$.

\textbf{2.}  The eigenprojector of $M$ associated to the negative eigenvalue is
\begin{equation}
\pi_\epsi = \Id - \dfrac{M_\epsi}{\sqrt{-\det M_\epsi}}.
\end{equation}
Thus the Berry curvature of $\C a_\epsi \rightarrow \Tt^2_*$ is
\begin{equation}
B_\epsi(\xi) \de -\Tr_{\C^2} \left( \dfrac{M_\epsi}{\sqrt{-\det M_\epsi}} \left[ \dd{}{\xi_1} \dfrac{M_\epsi}{\sqrt{-\det M_\epsi}}, \dd{}{\xi_2} \dfrac{M_\epsi}{\sqrt{-\det M_\epsi}} \right]\right).
\end{equation}
We observe that  as $\epsi \rightarrow 0$, the convergences
\begin{equation}
-\det M_\epsi = \az(\xi_1)^2 + |\beta(\xi_1)+\epsi e^{i\nu \xi_2}|^2 \rightarrow \az(\xi_1)^2+\beta(\xi_1)^2; \ \ \ \ \text{and} \ \  \p_2  M_\epsi \rightarrow 0
\end{equation}
are uniform. Moreover, for $\xi_1 \in [-1,1]$, $\az(\xi_1)^2+\beta(\xi_1)^2$ is bounded below by a positive constant. We deduce that $B_\epsi(\xi) \rightarrow 0$ uniformly away from $[-1,1] \times [-\pi,\pi]$.

\textbf{3.}  When $\xi_1 \in [-1,1]$, we have
\begin{equation}
M_\epsi(\xi) \de \matrice{\xi_1 & \epsi e^{-i\nu \xi_2} \\ \epsi e^{i\nu\xi_2} & -\xi_1}.
\end{equation}
A direct calculation, see e.g. \cite[Lemma 6.3]{Dr:18} shows that
\begin{equation}
B_\epsi(\xi_1) = \dfrac{i\epsi^2\nu}{2(\xi_1^2 + \epsi^2)^{3/2}}, \ \ \ \ \dfrac{i}{2\pi}\int_{[-1,1] \times [-\pi,\pi]} B_\epsi(\xi) d\xi \rightarrow -\nu. 
\end{equation}
It follows that if $\epsi$ is sufficiently small, then the Chern number of  $\C a_\epsi \rightarrow \Tt^2_*$ is $-\nu$, as claimed. This completes the proof. 
\end{proof}

\bibliographystyle{plain}
\bibliography{TI}

\begin{thebibliography}{10}

\bibitem[AFL18]	{Aea:18}
H.~Ammari, B.~Fitzpatrick, H.~Lee, E.~Orvehed Hiltunen and S.~Yu,
\newblock {\em Honeycomb-lattice {M}innaert bubbles.}
\newblock Preprint, \href{http://arxiv.org/abs/arXiv:1811.03905}{\nolinkurl{arXiv:1811.03905}}.

\bibitem[ASV13]{ASV:13}
J.~C. Avila, H.~Schulz-Baldes and C.~Villegas-Blas,
\newblock {\em Topological invariants of edge states for periodic two-dimensional
  models.}
\newblock Mathematical Physics, Analysis and Geometry,
  \textbf{16}:136--170, 2013.

\bibitem[B19]{Ba:19a}
G.~Bal,
\newblock {\em Continuous bulk and interface description of topological insulators.}
\newblock J. Math. Phys., \textbf{60}(8):20pp, 2019.

\bibitem[B77]{Be:77}
R.~Beals,
\newblock {\em Characterization of pseudodifferential operators and applications.}
\newblock Duke Math. J., \textbf{44}:45--57, 1977.

\bibitem[BES94]{BES:94}
J.~Bellissard, A.~van Elst and H.~Schulz-Baldes,
\newblock {\em The noncommutative geometry of the quantum {H}all effect. {T}opology
  and physics.}
\newblock J. Math. Phys., \textbf{35}(10):5373--5451, 1994.

\bibitem[BC18]{BC:18}
G.~Berkolaiko and A.~Comech,
\newblock {\em Symmetry and {D}irac points in graphene spectrum.}
\newblock Journal of Spectral Theory, \textbf{8}(3):1099---1147, 2018.

\bibitem[B84]{Be:84}
M.~V. Berry,
\newblock {\em Quantal phase factors accompanying adiabatic changes.}
\newblock Proc. Roy. Soc. London Ser. A, \textbf{392}(1802):45–57,
  1984.

\bibitem[BKR17]{BKR:17}
C.~Bourne, J.~Kellendonk and A.~Rennie,
\newblock {\em The {K}-theoretic bulk-edge correspondence for topological
  insulators.}
\newblock Ann. Henri Poincar\'e, \textbf{18}(5):1833--1866, 2017.

\bibitem[BR18]{BR:18}
C.~Bourne and A.~Rennie,
\newblock{\em  Chern numbers, localisation and the bulk-edge correspondence for
  continuous models of topological phases.}
\newblock Math. Phys. Anal. Geom., \textbf{21}(3), 2018.

\bibitem[B19]{Br:19}
M.~Braverman,
\newblock {\em Spectral flows of {T}oeplitz operators and bulk-edge correspondence.}
\newblock Lett. Math. Phys., \textbf{109}(10):2271--2289, 2019.

\bibitem[B87]{Bu:87}
V.S. Buslaev,
\newblock {\em Semiclassical approximation for equations with periodic coefficients.}
\newblock Russ. Math. Surv., \textbf{42}:97--125, 1987.

\bibitem[CG05]{CG:05}
J.-M. Combes and F.~Germinet.
\newblock {\em Edge and impurity effects on quantization of hall currents.}
\newblock Comm. Math. Phys., \textbf{256}(1):159--180, 2005.

\bibitem[DMV17]{DMV:17}
P.~Delplace, J.~B. Marston and A.~Venaille,
\newblock {\em Topological origin of equatorial waves.}
\newblock Science, \textbf{358}(6366):1075--1077, 2017.

\bibitem[D93]{Di:93}
M.~Dimassi.
\newblock {\em Asymptotic expansions of slow perturbations of the periodic
  {S}chr\"odinger operator.}
\newblock Comm. Partial Differential Equations,
  \textbf{18}(5-6):771--803, 1993.

\bibitem[DD14]{DD:14}
M.~Dimassi and A.~T. Duong,
\newblock {\em Trace asymptotics formula for the {S}chr\"odinger operators with
  constant magnetic fields.}
\newblock J. Math. Anal. Appl., \textbf{416}(1):427--448, 2014.

\bibitem[DGR02]{DGR:02}
M.~Dimassi, J.~C. Guillot and J.~Ralston,
\newblock {\em Semiclassical asymptotics in magnetic {B}loch bands.}
\newblock J. Phys. A, 35(35):7597--7605, 2002.

\bibitem[DGR04]{DGR:04}
M.~Dimassi, J.~C. Guillot and J.~Ralston,
\newblock {\em On effective {H}amiltonians for adiabatic perturbations of magnetic
  {S}chr\"odinger operators.}
\newblock Asymptot. Anal., \textbf{40}(2):137--146, 2004.

\bibitem[DS99]{DS:99}
M.~Dimassi and J.~Sj\"ostrand,
\newblock {\em Spectral Asymptotics in the Semi-Classical Limit.} 
\newblock London Mathematical Society, Lecture Note Series
  \textbf{268}(1999).

\bibitem[DZ03]{DZ:03}
M.~Dimassi and M.~Zerzeri,
\newblock {\em A local trace formula for resonances of perturbed periodic
  {S}chr\"odinger operators.}
\newblock J. Funct. Anal., \textbf{198}(1):142--159, 2003.

\bibitem[D18]{Dr:18}
A.~Drouot,
\newblock {\em The bulk-edge correspondence for continuous dislocated systems.}
\newblock Preprint, \href{http://arxiv.org/abs/arXiv:1810.10603}{\nolinkurl{arXiv:1810.10603}}.

\bibitem[D19a]{Dr:19a}
A.~Drouot,
\newblock {\em Characterization of edge states in perturbed honeycomb structures.}
\newblock Pure Appl. Anal., \textbf{1}(3):385--445, 2019.

\bibitem[D19b]{Dr:19b}
A.~Drouot,
\newblock {\em The bulk-edge correspondence for continuous honeycomb lattices.}
\newblock Comm. Partial Differential Equations,
  \textbf{44}(12):1406--1430, 2019.

\bibitem[DW19]{DW:19} 
A.~Drouot and M.~Weinstein,
\newblock {\em Edge states and the Valley Hall Effect.}
\newblock Preprint, \href{http://arxiv.org/abs/arXiv:1910.03509}{\nolinkurl{arXiv:1910.03509}}.

\bibitem[DFW18]{DFW:18}
A.~Drouot, C.~L. Fefferman and M.~I. Weinstein,
\newblock {\em Defect modes for dislocated periodic media.}
\newblock Preprint, \href{http://arxiv.org/abs/arXiv:1810.05875}{\nolinkurl{arXiv:1810.05875}}.



\bibitem[D75]{Dy:75}
E.~M. Dyn'kin,
\newblock {\em An operator calculus based upon the {C}auchy--{G}reen formula.}
\newblock J. Soviet Math., \textbf{4}(4), 329--34 (1975).


\bibitem[EG02]{EG:02}
P.~Elbau and G.~M. Graf,
\newblock {\em Equality of bulk and edge {H}all conductances revisted.}
\newblock Comm. Math. Phys., \textbf{229}:415--432, 2002.

\bibitem[EGS05]{EGS:05}
A.~Elgart, G.~M. Graf and J.~H. Schenker,
\newblock {\em Equality of the bulk and the edge {H}all conductances in a mobility
  gap.}
\newblock Comm. Math. Phys., \textbf{259}:185--221, 2005.


\bibitem[F19]{F:19} F. Faure, 
\newblock {\em Manifestation of the topological index formula in quantum waves and geophysical waves.}
\newblock Preprint, \href{http://arxiv.org/abs/arXiv:1901.10592}{\nolinkurl{arXiv:1901.10592}}.



\bibitem[F70]{Fe:70} B. V. Fedosov,  
\newblock {\em A direct proof of the formula for the index of an elliptic system in Euclidean space.}
\newblock Funkcional. Anal. i Prilozen., \textbf{4}(4): 83--84, 1970.

\bibitem[FLW16]{FLW:16}
C.~L. Fefferman, J.~P. Lee-Thorp and M.~I. Weinstein,
\newblock {\em Edge states in honeycomb structures.}
\newblock Annals of PDE, \textbf{2}(12), 2016.

\bibitem[FLW17]{FLW:17}
C.~L. Fefferman, J.~P. Lee-Thorp and M.~I. Weinstein,
\newblock {\em Topologically protected states in one-dimensional systems.}
\newblock Memoirs of the American Mathematical Society,
  \textbf{247}(1173), 2017.

\bibitem[FLW18]{FLW:18}
C.~L. Fefferman, J.~P. Lee-Thorp and M.~I. Weinstein,
\newblock {\em Honeycomb {S}chroedinger operators in the strong-binding regime.}
\newblock Comm. Pure Appl. Math., \textbf{71}(6), 2018.

\bibitem[FW12]{FW:12}
C.~L. Fefferman and M.~I. Weinstein,
\newblock {\em Honeycomb lattice potentials and {D}irac points.}
\newblock J. Amer. Math. Soc., \textbf{25}(4):1169--1220, 2012.

\bibitem[FT16]{FT:16}
S.~Freund and S.~Teufel,
\newblock {\em Peierls substitution for magnetic {B}loch bands.}
\newblock Anal. PDE, \textbf{9}(4):773--811, 2016.

\bibitem[GMS91]{GMS:91}
C.~G\'erard, A.~Martinez and S.~Sj\"ostrand,
\newblock {\em A mathematical approach to the effective {H}amiltonian in perturbed
  periodic problems.}
\newblock Comm. Math. Phys., \textbf{142}:217--244, 1991.

\bibitem[GP13]{GP:13}
G.~M. Graf and M.~Porta,
\newblock {\em Bulk-edge correspondence for two-dimensional topological insulators.}
\newblock Comm. Math. Phys., \textbf{324}(3):851--895, 2012.

\bibitem[GS18]{GS:18}
G.~M. Graf and J.~Shapiro.
\newblock {\em The bulk-edge correspondence for disordered chiral chains.}
\newblock Comm. Math. Phys., \textbf{363}(3):829--846, 2018.

\bibitem[GT18]{GT:18}
G.~M. Graf and C.~Tauber.
\newblock {\em Bulk-edge correspondence for two-dimensional {F}loquet topological
  insulators.}
\newblock Ann. Henri Poincar\'e, \textbf{19}(3):709--741, 2018.

\bibitem[GRT88]{GRT:88}
J.~C. Guillot, J.~Ralston and E.~Trubowitz,
\newblock {\em Semi-classical methods in solid state physics.}
\newblock Comm. Math. Phys., \textbf{116}:401--415, 1988.

\bibitem[HR07]{HR:07}
F.~D.~M. Haldane and S.~Raghu,
\newblock {\em Possible realization of directional optical waveguides in photonic
  crystals with broken time-reversal symmetry.}
\newblock Phys. Rev. Lett., \textbf{100}(1):013904, 2008.

\bibitem[H82]{Ha:82}
B.~I. Halperin,
\newblock {\em Quantized {H}all conductance, current-carrying edge states, and the
  existence of extended states in a two-dimensional disordered potential.}
\newblock Phys. Rev. B, \textbf{25}(4):2185--2190, 1982.

\bibitem[H93]{Ha:93}
Y.~Hatsugai,
\newblock {\em The {C}hern number and edge states in the integer quantum hall
  effect.}
\newblock Phys. Rev. Lett., \textbf{71}:3697--3700, 1993.


\bibitem[HS89]{HS:89}
B.~Helffer and J.~Sj\"ostrand
\newblock {\em Equation de {S}chr\"odinger avec champ magn\'etique et \'equation de {H}arper.}
\newblock Springer Lect. Notes in Physics \textbf{345}, 118--197, Springer, Berlin (1989).


\bibitem[HS90]{HS:90} B. Helffer and J. Sj\"ostrand, 
\newblock {\em On diamagnetism and de {H}aas-van {A}lphen effect.}
\newblock  Ann. Inst. H. Poincaré Phys. Th\'eor. \textbf{52}(1990), no. 4, 303--375. 

\bibitem[H85]{Ho:85}
L.~H\"ormander,
\newblock {\em The analysis of linear partial differential operators. III.
  Pseudodifferential operators.} 
\newblock Springer-Verlag,  Berlin \textbf{274}(1985).

\bibitem[KRS02]{KRS:02}
J.~Kellendonk, T.~Richter and H.~Schulz-Baldes,
\newblock {\em Edge current channels and {C}hern numbers in the integer quantum
  {H}all effect.}
\newblock Rev. Math. Phys., \textbf{14}(1):87--119, 2002.

\bibitem[KS04a]{KS:04a}
J.~Kellendonk and H.~Schulz-Baldes,
\newblock {\em Quantization of edge currents for continuous magnetic operators.}
\newblock J. Funct. Anal., \textbf{209}(2):388--413, 2004.

\bibitem[KS04b]{KS:04b}
J.~Kellendonk and H.~Schulz-Baldes,
\newblock {\em Boundary maps for crossed products with an application to the quantum
  {H}all effect.}
\newblock Comm. Math. Phys., \textbf{3}(3):611--637, 2004.

\bibitem[KMT13]{Kea:13}
A.~B. Khanikaev, S.~H. Mousavi, W.-K. Tse, M.~Kargarian, A.~H. MacDonald and
  G.~Shvets,
\newblock {\em Photonic topological insulators.}
\newblock Nature materials, \textbf{12}(3):233--239, 2013.

\bibitem[K17]{Ku:17}
Y.~Kubota,
\newblock {\em Controlled topological phases and bulk-edge correspondence.}
\newblock Comm. Math. Phys., \textbf{349}(2):493--525, 2017.

\bibitem[K16]{Ku:16}
P.~A. Kuchment,
\newblock {\em An overview of periodic elliptic operators.}
\newblock Bull. Amer. Math. Soc., \textbf{53}(3):343--414, 2016.

\bibitem[LWZ18]{LWZ:18}
J.~P. Lee-Thorp, M.~I. Weinstein and Y.~Zhu,
\newblock {\em Elliptic operators with honeycomb symmetry; {D}irac points, edge
  states and applications to photonic graphene.}
\newblock Arch. Rational Mech. Anal., 2018.

\bibitem[M17]{Mo:17}
D.~Monaco,
\newblock {\em Chern and {F}u--{K}ane--{M}ele invariants as topological
  obstructions.}
\newblock Advances in quantum mechanics, Springer, \textbf{18}:201--222,
  2017.

\bibitem[M01]{Mo:01}
J.~D. Moore,
\newblock {\em Lectures on {S}eiberg--{W}itten invariants.}, 
\newblock Lecture Notes in Mathematics, Springer–Verlag, Berlin \textbf{1629}(2001).

\bibitem[P07]{Pa:07}
G.~Panati,
\newblock {\em Triviality of {B}loch and {B}loch-{D}irac bundles.}
\newblock Ann. Henri Poincar\'e, \textbf{8}(5):995--1011, 2007.

\bibitem[PST03a]{PST:03a}
G.~Panati, H.~Spohn and S.~Teufel,
\newblock {\em Space-adiabatic perturbation theory.}
\newblock Adv. Theor. Math. Phys., \textbf{7}(1):145--204, 2003.

\bibitem[PST03b]{PST:03b}
G.~Panati, H.~Spohn and S.~Teufel,
\newblock {\em Effective dynamics for {B}loch electrons: Peierls substitution and
  beyond.}
\newblock Comm. Math. Phys., \textbf{242}(3):547--578, 2003.



\bibitem[PDV19]{PDV:19}
M.~Perrot, P. Delplace and A. Venaille,
\newblock {\em Topological transition in stratified fluids.}
\newblock Nature Physics \textbf{15}(2019), 781--784.

\bibitem[RH08]{RH:08}
S.~Raghu and F.~D.~M. Haldane,
\newblock {\em Analogs of quantum-{H}all-effect edge states in photonic crystals.}
\newblock Phys. Rev. A, \textbf{78}(3):033834, 2008.

\bibitem[RS78]{RS:78}
M.~Reed and B.~Simon,
\newblock {\em Methods of Modern Mathematical Physics: Analysis of Operators,
  Volume IV.}
\newblock Academic Press, 1978.

\bibitem[ST19]{ST:19}
J.~Shapiro and C.~Tauber,
\newblock {\em Strongly disordered {F}loquet topological systems.}
\newblock Ann. Henri Poincar\'e, \textbf{20}(6):1837--1875, 2019.

\bibitem[S83]{Si:83}
B.~Simon.
\newblock {\em Holonomy, the quantum adiabatic theorem, and {B}erry's phase.}
\newblock Phys. Rev. Lett. \textbf{51}(24):2167, 1983.


\bibitem[SZ91]{SZ:91} J.~Sj\"ostrand and M. Zworski, 
\newblock {\em Complex scaling and the distribution of scattering poles.}
\newblock J. Amer. Math. Soc., \textbf{4}(4), 729--769 (1991).

\bibitem[SZ07]{SZ:07}
J. Sj\"ostrand and M. Zworski,
\newblock {\em Elementary linear algebra for advanced spectral problems.}
\newblock Ann. Inst. Fourier \textbf{57}(2007), no. 7, 2095–2141.


\bibitem[T14]{Ta:14}
A.~Taarabt,
\newblock {\em Equality of bulk and edge {H}all conductances for continuous magnetic
  random {S}chr\"odinger operators.}
\newblock Preprint, \href{http://arxiv.org/abs/arXiv:1403.7767}{\nolinkurl{arXiv:1403.7767}}.




\bibitem[TKN82]{TKNN:82}
D.~J. Thouless, M.~Kohmoto, M.~P. Nightgale and M.~Den Nijs,
\newblock {\em Quantized {H}all conductance in a two-dimensional periodic potential.}
\newblock Phys. Rev. Lett. \textbf{49}:405, 1982.

\bibitem[W16]{W:16}
N.~Waterstraat,
\newblock {\em Fredholm Operators and Spectral Flow.}
\newblock Lecture notes, \href{http://arxiv.org/abs/arXiv:1603.02009}{\nolinkurl{arXiv:1603.02009}}.

\bibitem[YGS15]{Yea:15}
Z.~Yang, F.~Gao, X.~Shi, X.~Lin, Z.~Gao, Y.~Chong and B.~Zhang,
\newblock {\em Topological acoustics.}
\newblock Phys. Rev. Lett., 114:114301, 2015.

\bibitem[Z12]{Zw:12}
M.~Zworski,
\newblock {\em Semiclassical analysis}, volume \textbf{138}.
\newblock Graduate Studies in Mathematics, American Mathematical Society, 2012.

\end{thebibliography}
\end{document}